\documentclass{article}
\usepackage{graphicx} % Required for inserting images

\title{Free Left Distributive Algebras and a Canonical Extension}
\author{Scott Cramer \and Meng-Che ``Turbo'' Ho\thanks{The second author acknowledges support by the National Science Foundation under Award No.~2054558 and No.~2555186.} \and Sheila K. Miller Edwards \and Nam Trang\thanks{The fourth author acknowledges support by the National Science Foundation under Career Award No. 1945592.}\ \thanks{The project began at a SQuaRE at the American Institute of Mathematics. The authors thank AIM for providing a supportive and mathematically rich environment.}}
\date{}%June 26, 2025}

\usepackage{amsmath}

\usepackage{amsthm}
\usepackage{amssymb}

\usepackage{url}
\usepackage{hyperref}
\usepackage{todonotes}
\usepackage{comment}
\usepackage{enumerate}

\theoremstyle{plain}
\newtheorem{theorem}{Theorem}
\newtheorem{thm}[theorem]{Theorem}
\newtheorem{prop}[theorem]{Proposition}
\newtheorem*{theorem*}{Theorem}

\newtheorem{lemma}[theorem]{Lemma}
\newtheorem{lem}[theorem]{Lemma}
\newtheorem{cor}[theorem]{Corollary}
\newtheorem{corollary}[theorem]{Corollary}
\newtheorem{question}[theorem]{Question}
\newtheorem{conjecture}[theorem]{Conjecture}
\newtheorem{remark}[theorem]{Remark}
\newtheorem{rmk}[theorem]{Remark}

\theoremstyle{definition}
\newtheorem{definition}[theorem]{Definition}
\newtheorem{dfn}[theorem]{Definition}

\theoremstyle{remark}

\newcommand{\rng}{\operatorname{rng}}
\newcommand{\Acal}{\mathcal{A}}
\newcommand{\Ccal}{\mathcal{C}}

\newcommand{\isom}{\cong}
\newcommand{\rest}{\restriction}

\newcommand{\la}{\left <}
\newcommand{\ra}{\right >}
\newcommand{\Ecal}{\mathcal{E}}

\newcommand{\crit}{\text{crit }}
\newcommand{\irng}{\text{irng }}
\newcommand{\sseq}{\subseteq}

\newcommand{\vlm}{V_{\lambda+1}}

\begin{document}

\maketitle

\begin{abstract}
    Assuming a large cardinal hypothesis, Laver gave a representation of the monogenerated free left distributive algebra (LDA) using elementary embeddings and used this representation to prove many algebraic results. Some of these results were later proved by Dehornoy in ZFC, without the large cardinal hypotheses. However, there is an important algebraic result whose consistency strength is unknown \cite{Laver:1995, DoughertyJech:1997}. Recent results \cite{BrookeTaylorCramerMiller2024} extend the connection between elementary embeddings of set theory and free LDAs to the many-generated case.

    Assuming large cardinals, we prove two results. First, we prove that finitely-generated free LDAs with distinct numbers of generators are $\Sigma_1$-elementarily equivalent but not $\Sigma_2$-elementarily equivalent. We also prove a partial structural analogue to Laver's representation of LDAs. We construct an extension of the monogenerated free LDA 
    where application by any fixed element is an elementary embedding of LDAs.
   
    We argue that this extension is canonical by demonstrating homogeneity and universality properties. These results also provide additional examples of algebraic properties provable from large cardinals without known proofs from the standard axioms of set theory.
\end{abstract}

\section{Introduction}\label{Introduction}
Left distributive algebras are those in which multiplication on the left by an element of the algebra is a homomorphism: In a left distributive algebra with underlying set $A$ and binary operation $\cdot$, for every element $a, b$, and $c$, the equality $a \cdot (b \cdot c) = (a \cdot b) \cdot (a \cdot c)$ holds. Many well-known mathematical operations, including group conjugation and the weighted mean, are left distributive. In each of these cases, the operation is also idempotent ($x \cdot x = x$ for every element $x$ in the algebra), hence the algebras are not free.

Beginning in the 1980s, a deep connection was discovered between left distributive algebras and large cardinals, logical axioms the existence of which implies the consistency of the usual axioms of set theory, ZFC (and also many statements independent of ZFC). By G\"odel's Incompleteness Theorems, the consistency of large cardinal axioms does not follow from ZFC. Large cardinal axioms are linearly ordered by consistency strength,\footnote{All natural large cardinal axioms of interest to set theorists are linearly ordered.} and the stronger large cardinal axioms can typically be formulated to assert the existence of elementary embeddings with specific closure properties. Under one such large cardinal hypothesis called a \emph{rank-to-rank} embedding, Richard Laver \cite{Laver:1992} proved that the algebra generated by the closure of a single rank-to-rank embedding under the \emph{application} operation is linearly ordered by the iterated left subterm relation $<_L$  and isomorphic to the free left distributive algebra on one generator, $\Acal$. (For details, see Section \ref{Background}.) In so doing, Laver proved that the word problem for the left distributive law is decidable.

Patrick Dehornoy subsequently proved that the decidability of the word problem is a purely algebraic result not requiring large cardinals \cite{Dehornoy:1994}. In particular, he defined an operation on elements of Artin's braid group on infinitely many strands $B_{\infty}$ and showed in ZFC that the closure of any element of $B_{\infty}$ under that operation is isomorphic to the free left distributive algebra on one generator, $\mathcal{A}$. (See Section \ref{Background}.) 

Work to generalize and extend these early results and to better understand the relationship between large cardinals and left distributive algebras has continued since the 1990s. Also using Artin's braid group, David Larue significantly simplified Dehornoy's proof \cite{Larue:1994} and constructed free left distributive algebras on $n$ generators for each finite $n$, now called the Larue groups \cite{LarueThesis:1994}. Though it seemed that an example of a many-generated free left distributive algebra should also exist in the context of rank-to-rank embeddings, the demonstration of such an algebra proved elusive until quite recently \cite{BrookeTaylorCramerMiller2024, BTM:2026}. Many questions from the 1990s remain open, including one about the consistency strength of a theorem about a family of finite algebras known as the Laver tables.

Perhaps the central question of the theory of left distributive algebras is whether there are statements in the theory of left distributive algebras that contain large cardinal strength---namely, statements the assertion of which implies the existence of a large cardinal---or whether all structural properties of left distributive algebras can be proven from the usual axioms of set theory (or from considerably weaker theories). Our overarching goal is to understand which of these two possibilities holds. This paper contributes two further structural results on left distributive algebras derivable from large cardinals. First, we show that finitely-generated free left distributive algebras on distinct numbers of generators are $\Sigma_1$-elementarily equivalent but not $\Sigma_2$-elementarily equivalent; second, we define a left distributive algebra $\Ccal_1$ that preserves more properties of algebras of rank-to-rank embeddings than does the monogenerated free left distributive algebra $\Acal$. Whether it is possible to prove either of the two main results of this paper without large cardinal hypotheses is open.

The first of our two main results shows, under a large cardinal hypothesis, that, in spite of the qualitative differences between working with one- and many-generated free left distributive algebras, $\Sigma_1$ formulas cannot distinguish between finitely-generated, free left distributive algebras with different numbers of generators, while $\Sigma_2$ formulas can.

\begin{theorem*}[A]
    Under appropriate large cardinal assumptions, for any two distinct positive integers $m$ and $n$, the free left distributive algebras on $m$ and $n$ generators, $\mathcal{A}_m$ and $\mathcal{A}_n$, are $\Sigma_1$-elementarily equivalent but not $\Sigma_2$-elementarily equivalent.
\end{theorem*}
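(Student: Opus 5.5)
The plan is to handle the two halves of Theorem (A) separately. For the $\Sigma_2$ part I will find one $\Sigma_2$ sentence that depends on the number of generators. For the $\Sigma_1$ part I will use the rank-to-rank representations of $\mathcal{A}_m$ and $\mathcal{A}_n$ from \cite{BrookeTaylorCramerMiller2024}. The aim is to embed each of $\mathcal{A}_m$ and $\mathcal{A}_n$ into a common model of the other's existential theory. Existential sentences are preserved upward under embeddings. So it suffices to show that every finite system of equations and inequations solvable in $\mathcal{A}_n$ is solvable in $\mathcal{A}_m$, and conversely.

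\textbf{The $\Sigma_1$ half.}
\begin{enumerate}
\item \emph{Reduction to the case $m<n$.} Since $\mathcal{A}_m$ embeds in $\mathcal{A}_n$ as the subalgebra on the first $m$ generators, one direction is immediate. It therefore suffices to show that every $\Sigma_1$ sentence true in $\mathcal{A}_n$ is true in $\mathcal{A}_m$; by a further reduction, in $\mathcal{A}_1$.
\item \emph{Using the embedding representation.} By \cite{BrookeTaylorCramerMiller2024}, under the large cardinal hypothesis $\mathcal{A}_n$ is isomorphic to the algebra generated under application by suitable rank-to-rank embeddings $j_1,\dots,j_n$ on $V_\lambda$. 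A witness tuple in $\mathcal{A}_n$ is then a tuple of terms $t_i(j_1,\dots,j_n)$ satisfying finitely many equations and inequations.
\item \emph{Reflection to one embedding.} I will choose a single embedding $j$ and elements $k_1,\dots,k_n$ in the algebra $\mathcal{A}_j$ generated by $j$ such that $k_1,\dots,k_n$ satisfy the same finitely many relevant inequations as $j_1,\dots,j_n$. Equations transfer automatically, because the $k_i$ again satisfy left distributivity.
\item \emph{Freeness of $\mathcal{A}_j$.} Since $\mathcal{A}_j\cong\mathcal{A}_1$ by Laver, the sentence then holds in $\mathcal{A}_1$.
\end{enumerate}

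The main obstacle is step 3, preserving the inequations. The first approach I will try is to take $k_i$ to be elements such as $j_{(i)}$, for example $j, jj, (jj)j, \dots$, or iterates with distinct critical points. I would then argue via the Laver-table projections $A_{2^N}$ that finitely many inequations are witnessed at some finite level $N$. A more robust alternative is to note that any inequation $s\neq t$ among terms in embeddings is witnessed on $V_\gamma$ for some $\gamma<\lambda$. Elementarity then lets one pull the whole configuration $(j_1,\dots,j_n)$ into the algebra generated by one sufficiently strong embedding. I would use the square-root / reflection arguments available from an $I3$-type hypothesis to find such $k_i$ inside $\mathcal{A}_j$.

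\textbf{The $\Sigma_2$ half.} I will exploit the fact that $\mathcal{A}_1$ is linearly ordered by $<_L$, while $\mathcal{A}_n$ for $n\ge2$ has incomparable elements. Concretely, in $\mathcal{A}_1$ every two elements $a,b$ satisfy one of $a=b$, $a<_L b$, or $b<_L a$. Here $a<_L b$ means $b$ is an iterated left product $(\cdots((a c_1)c_2)\cdots)c_k$, which is existential but of unbounded length. To get a genuine first-order statement I would instead use a bounded consequence of comparability, for instance that for all $a,b$ there exist $x,y$ with $ax=by$ or some similar confluence identity. Dehornoy showed that any two elements of $\mathcal{A}_1$ have a common right multiple in an appropriate sense, whereas distinct generators $g_1,g_2$ of $\mathcal{A}_n$ do not. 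I would verify that $\forall a\,\forall b\,\exists x\,\exists y\,(ax=by)$ holds in $\mathcal{A}_1$ but fails for the generators $g_1,g_2$ in $\mathcal{A}_n$. Failure can be checked by projecting to a quotient that remembers the leftmost generator of each term, since left multiplication $ax$ keeps $a$'s leftmost generator. For truth in $\mathcal{A}_1$, any two elements share the leftmost generator, and I expect a normal-form argument to give $x,y$.

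Distinguishing $\mathcal{A}_m$ from $\mathcal{A}_n$ when both $m,n\ge2$ is harder. I would use the sentence asserting that there are at most $n$ classes under the relation ``having a common left-multiple form''; this is $\forall\exists$ after bounding. This is the step whose first-order expressibility I am least sure of.
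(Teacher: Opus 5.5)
\textbf{The $\Sigma_1$ half.} The essential step is missing. Step~3 is the entire content of this half, and neither option you suggest supplies it.

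No fixed choice such as $j, jj, (jj)j$ can work. By Proposition~\ref{prop:commonPower}, any $a,b\in\Acal_1$ satisfy some $a^{(p)}=b^{(q)}$, and this equation is false for distinct free generators. So the witnesses in $\Acal_j$ must depend on the sentence. Laver-table projections do not help here: they only certify inequations for a tuple you already have.

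Your second option, that elementarity lets you pull $(j_1,\dots,j_n)$ into the algebra generated by one embedding, is precisely what needs proof. Reflection and square-root lemmas produce embeddings with prescribed sets in their range, not elements of $\Acal_j$. Moreover, arbitrary free generators need not be approximable inside a single $\Acal_j$.

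The idea the paper uses is to pick free generators that are limits of elements of $\Acal_j$:
\begin{itemize}
\item For $j^+$ elementary on $V_{\lambda+1}$, the pair $k=j\circ j^{(1)}\circ j^{(2)}\circ\cdots$ and $\ell=j\circ j\circ j^{(1)}\circ\cdots$ from \cite{BrookeTaylorCramerMiller2024} generates a free algebra.
\item The truncations $k_n=j\circ j^{(1)}\circ\cdots\circ j^{(n)}$ and $\ell_n$ agree with $k,\ell$ on $V_{\delta_n}$, where $\delta_n$ is the $n$th element of $\crit\Acal_j$.
\item Take $n$ large enough that the finitely many inequations are visible on $V_{\delta_n}$.
\item Then \cite[Theorem 10]{Laver:1995} gives $k^*,\ell^*\in\Acal_j$ agreeing with $k_n,\ell_n$ on $V_{\delta_n}$.
\end{itemize}
This only handles $\Acal_2$ versus $\Acal_1$. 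For more generators the paper does not approximate $n$ generators. Instead it embeds $\Acal_3$ into $\Acal_2$: it writes $k=\bar\ell j$ and $\ell=\bar\ell\bar\ell$ using Lemma~\ref{lem:reflection} and checks disjointness of iterated ranges. This makes all $\Acal_n$ with $n\ge2$ mutually embeddable. Your plan has neither an $n$-generator approximation nor such an embedding.

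\textbf{The $\Sigma_2$ half.} Your sentence for $m,n\ge2$ does not work. Write $a\approx b$ for $\exists x\,\exists y\,(ax=by)$.

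Suppose $u\nsim v$ (a variable clash) and $w=ux=vy$. Since $u\le_L w$, the clash witness for $u$ is also a witness for $w$, so $w\nsim v$. But also $v<_L w$, contradicting Theorem~\ref{thm:quadrichotomy}. So clashing elements have no common right multiple.

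Hence in $\Acal_2$ with generators $g,h$ you have $g\approx gg$ (via $g(gx)=(gg)(gx)$) and $g\approx gh$ (via $g(hy)=(gh)(gy)$), but not $gg\approx gh$. The elements $gh,(gg)h,((gg)g)h,\dots$ pairwise clash, so they are pairwise $\approx$-unrelated. Thus $\approx$ is not an equivalence relation. The statement ``there are no $N$ pairwise unrelated elements'' is false in every $\Acal_m$ with $m\ge2$, so it cannot separate $\Acal_m$ from $\Acal_n$.

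The paper's sentence is simpler and works uniformly. The generators are exactly the non-products, defined by the $\Pi_1$ formula $\forall y\,\forall z\,(x\neq y\cdot z)$. So ``there are at least $n$ distinct generators'' is $\Sigma_2$; it is true in $\Acal_n$ and false in $\Acal_m$ for $m<n$.

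Your separation of $\Acal_1$ from $\Acal_n$ ($n\ge2$) via $\forall a\,\forall b\,\exists x\,\exists y\,(ax=by)$ is correct. Its truth in $\Acal_1$ needs no normal form: Proposition~\ref{prop:commonPower} gives $a^{(p)}=b^{(q)}$, hence $a\cdot a^{(p)}=a^{(p+1)}=b^{(q+1)}=b\cdot b^{(q)}$.
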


Laver's result that the algebra $\Acal_{\{j\}}$ generated by closing a single rank-to-rank elementary embedding $j$ under the application operation is free can be viewed as asserting that $\Acal_{\{j\}}$ always embeds into a simply defined algebraic structure (namely, the free left distributive algebra on a single generator). His result suggests a natural question: Does the closure of every finite set of rank-to-rank embeddings generate an algebra that embeds into a simply definable structure (such as the free left distributive algebra on the appropriate number of generators)?

The algebras generated by finite collections of rank-to-rank embeddings are more complex than $\Acal_{\{j\}}$ and so, too, is the answer to this question.

Much of the time, it is not clear what the algebra generated by two embeddings might look like. However, under special circumstances, it is possible to find two embeddings that do generate an algebra isomorphic to the free, two-generated left distributive algebra $\Acal_2$ \cite{BrookeTaylorCramerMiller2024}. Indeed, for any cardinal $\kappa$, $1 \leq \kappa \leq 2^{\aleph_0}$, we can find a collection of elementary embeddings that generate the free left distributive algebra $\mathcal{A}_{\kappa}$ \cite{BTM:2026}.

Motivated to capture additional structure of the large cardinal embeddings, we extend the free left distributive algebra $\Acal$ to a canonical algebra $\Ccal_1$ and show that $\Ccal_1$ has certain desirable properties. The demonstration of these properties is the second of our two main results.

\begin{theorem*}[B]
    Under appropriate large cardinal assumptions, the structure $\mathcal{C}_1$ is \emph{universal} and \emph{homogeneous}, and the application operation on $\mathcal{C}_1$ is elementary. Furthermore, certain sets of elementary embeddings that do not embed into $\Acal$ do embed into $\Ccal_1$.
\end{theorem*}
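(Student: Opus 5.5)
The plan is to realize $\Ccal_1$ concretely inside the algebra of rank-to-rank embeddings, and to read universality, homogeneity and elementarity of application off the elementarity of the embeddings themselves. Fix a nontrivial $j\colon V_\lambda\to V_\lambda$ and let $\Ecal_\lambda$ be the set of all such embeddings, with application $k\cdot\ell=\bigcup_{\alpha<\lambda}k(\ell\cap V_\alpha)$. By Laver's theorem $\Acal_{\{j\}}\cong\Acal$. I would define $\Ccal_1$ as the closure of $\{j\}$ under application together with the further operations supplied by elementarity. Concretely, whenever $k\in\Ccal_1$ and $\ell$ is definable in $V_\lambda$ from parameters $p_1,\dots,p_r$ in the part of $\Ccal_1$ already built, $k\cdot\ell$ is definable from $k\cdot p_1,\dots,k\cdot p_r$ by the same formula. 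The aim is to choose the closure so that $\Ccal_1$ is exactly the smallest substructure of $\Ecal_\lambda$ containing $j$ that is closed under $x\mapsto k\cdot x$ for each $k\in\Ccal_1$ and that is an elementary substructure in the LD-language. The hope is that this closure can be described syntactically, by terms built from finitely many ``witness'' operations, so that its isomorphism type does not depend on $j$. Establishing this independence is the step that makes $\Ccal_1$ canonical.

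Elementarity of application comes next. For $k\in\Ccal_1$, the map $x\mapsto k\cdot x$ is a restriction of $k$ acting on $V_\lambda$. Take an LD-formula $\varphi(\vec x)$ and $\vec a\in\Ccal_1$. Since the interpretation of $\cdot$ on $\Ecal_\lambda$ is definable over $V_{\lambda+1}$ and $\Ecal_\lambda$ is closed under the relevant extension, we get $\Ccal_1\models\varphi(\vec a)\iff\Ecal_\lambda\models\varphi(\vec a)\iff\Ecal_\lambda\models\varphi(k\cdot\vec a)\iff\Ccal_1\models\varphi(k\cdot\vec a)$. The outer two equivalences need $\Ccal_1\preceq\Ecal_\lambda$, which is built into the construction. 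The middle one needs $k$ to extend to an elementary embedding of $V_{\lambda+1}$, or at least to be $\Sigma^1_n$-elementary for each $n$. This is where the large cardinal hypothesis enters: I would assume something like $I_1$ or $I_0$, or else work with formulas of bounded complexity.

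For homogeneity and universality I would use a back-and-forth argument. For homogeneity, I need that any partial isomorphism between finitely generated substructures of $\Ccal_1$ preserving LD-types extends to an automorphism. The generators are $j$ and the witnesses, and since $\Ccal_1$ is countable it suffices to show that types over finite sets are realized in a uniform, term-definable way. Universality means that every finitely or countably generated algebra of the form $\Acal_S$ with $S\subseteq\Ecal_\lambda$ satisfying the appropriate compatibility embeds into $\Ccal_1$. Here I would map each generator to a term of $\Ccal_1$ realizing its LD-type over the previously placed elements, using the theorem that $\Acal_m$ and $\Acal_n$ are $\Sigma_1$-equivalent (Theorem A) to guarantee that existential types realized in $\Ecal_\lambda$ are realized in $\Ccal_1$. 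The final clause, that some sets not embeddable in $\Acal$ embed in $\Ccal_1$, should then follow by exhibiting two embeddings $k_1,k_2$ with $\Acal_{\{k_1,k_2\}}\cong\Acal_2$, as in \cite{BrookeTaylorCramerMiller2024}. No copy of $\Acal_2$ embeds in $\Acal$, since $\Acal$ is linearly ordered by $<_L$ and any two elements of $\Acal$ are comparable in a way that free generators are not. Universality nonetheless places $\Acal_{\{k_1,k_2\}}$ inside $\Ccal_1$.

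The main obstacle is canonicity. I have to show that the isomorphism type of $\Ccal_1$, in particular which LD-types are realized, does not depend on the choice of $j$ or $\lambda$. Only then can $\Ccal_1$ be called ``the'' extension, and only then does the back-and-forth give homogeneity. My first attempt would be to prove that any two such $j,j'$ have the same LD-theory with parameters for iterates, by comparing both to the free algebra via Laver's and Dehornoy's normal forms and Theorem A. If that fails, I would instead define $\Ccal_1$ as the Fraïssé limit of the class of finitely generated subalgebras of $\Ecal_\lambda$ containing a copy of $\Acal$. I would then verify the amalgamation property using the fact that, for any two such subalgebras, one can find embeddings realizing both over a common part.
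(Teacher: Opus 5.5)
There is a fatal gap at the first step. No subalgebra of $(\Ecal_\lambda,\cdot)$ can have all of its application maps elementary, so $\Ccal_1$ cannot be realized inside the algebra of embeddings the way you propose, whatever closure you take. To see this, let $\mathcal{B}\subseteq\Ecal_\lambda$ be a subalgebra and let $y\in\mathcal{B}$ have least critical point among elements of $\mathcal{B}$. By Lemma \ref{lem:sqrrtfacts}(i), $\mathcal{B}\models\neg\exists z\,(zz=y)$. But $\mathcal{B}\models\exists z\,(zz=yy)$, with witness $y$. So $t\mapsto yt$ is not elementary on $\mathcal{B}$.

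This is exactly the remark after Theorem \ref{lem:elem}. Your middle equivalence $\Ecal_\lambda\models\varphi(\vec a)\iff\Ecal_\lambda\models\varphi(k\cdot\vec a)$ holds when $k^+$ is fully elementary on $\vlm$. However, demanding $\Ccal_1\preceq\Ecal_\lambda$ forces $\Ccal_1$ to contain an element with no square root at all, and by Lemma \ref{sqrrtlem} such an element is not even $\Sigma_1$-elementary.

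Moreover, the statement concerns the paper's $\Ccal_1$, the direct limit of $\Acal_1\to\Acal_1\to\cdots$ under $a\mapsto xa$. That structure contains the infinite chain of square roots $x_{n+1}x_{n+1}=x_n$, so it admits no homomorphism into $\Ecal_\lambda$ whatsoever: critical points would decrease forever. The canonicity obstacle you single out is an artifact of your definition, since the paper's $\Ccal_1$ is defined purely algebraically.

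What is missing is the paper's mechanism, in which embeddings are used only to verify finite configurations that are then pushed into the limit.

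\emph{Elementarity.} The key tool is Lemma \ref{lem:conesquarerootsexist}: for $a,b_0,\dots,b_m\in\Ccal_1$ there is $\bar a$ with $\bar a\bar a=a$, $\bar a$ left-dividing each $b_i$, and $\bar ab_i=ab_i$. The paper proves it in four steps.
\begin{enumerate}
\item View the stages of the limit as copies of $\Acal_j$ with $j\in\Ecal_{\lambda+1}$ and connecting maps $k\mapsto jk$.
\item Use Lemma \ref{lem:simplifiedsquarerootstructurelemma} to get an iterated root $q$ of $j$.
\item From the same lemma, get a square root $\bar k_a\in\Acal_q$ of $k_a$ with $j\in\rng\bar k_a$.
\item Transport $\bar k_a$ to a later stage using $q\rest\Acal_j=j\rest\Acal_j$ (Lemma \ref{lem:nthrootfacts}).
\end{enumerate}
Elementarity of each $i_z$ then follows by induction on formulas, as in (3)$\Rightarrow$(1) of Lemma \ref{sqrrtlem}: pull a witness back along $i_{\bar z}$ and push it forward along $i_z$.

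\emph{Homogeneity.} This comes from Lemma \ref{lem:conesquareseqdecomp}, not from a back-and-forth on types. Every $a$ begins a chain of square roots $a_n$ with $\bigcup_n\Acal_{a_n}=\Ccal_1$, and $a_n\mapsto b_n$ induces an automorphism.

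\emph{Universality.} In the paper's sense (Corollary \ref{cor:universality}), this comes from Corollary \ref{cor:homogeneity}: for any term $t$, every element has the form $t(y)$. Theorem A plays no role here. It also would not transfer existential types from $\Ecal_\lambda$ to $\Ccal_1$ in any case.

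\emph{The final clause.} Your example is wrong. Any two elements of $\Ccal_1$ have a common right power, whereas $x^{(n)}\neq y^{(m)}$ in $\Acal_2$ because the rightmost variables differ. So $\Acal_2$ embeds into $\Ccal_1$ no more than into $\Acal$. This common-power argument, not linearity of $<_L$ (whose witnesses need not lie in the image), is also the correct reason $\Acal_2$ fails to embed into $\Acal$. The sets the paper means are countable, rigid, square-confluent, square-full ones, such as an infinite rigid family of square roots of a single embedding. The Laver--Steel theorem on critical points excludes these from $\Acal$, and Theorem \ref{thm:countablerigidsqrfull} embeds them into $\Ccal_1$.
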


See Corollary \ref{cor:universality} for the precise meaning of universal in the statement Theorem B, Corollary \ref{cor:stronghomogeneity} for that of homogeneous, and Theorem \ref{thm:countablerigidsqrfull} for details of which sets of elementary embeddings embed into $\Ccal_1$. The large cardinal assumptions used in Theorems (A) and (B) are precisely stated later in the paper.

Our theorem shows that the map from $\Acal$ to $\Acal$ given by application by an element $a$ in $\Acal$ can be extended to an elementary embedding from the expanded algebra $\Ccal_1$ to $\Ccal_1$. Laver started with a (set-theoretic) elementary embedding and obtained an LDA; we begin with the application operation on LDAs, which are purely algebraic objects, and obtain, using large cardinals, the existence of a purely algebraic elementary embedding of LDAs. 

The structure of the remainder of the paper is as follows. Section \ref{Background} gives relevant background in model theory and large cardinal theory and a brief summary of important results about left distributive algebras. In Section \ref{section:pullbackalgebra} we prove a theorem connecting divisibility conditions in the algebra $\Acal$ to the existence of pullbacks in the algebra $\Acal_{{\{j\}}}$ of embeddings. In Section \ref{ElemEquivofLDAs} we show that the finitely-generated free left distributive algebras are all $\Sigma_1$-elementarily equivalent to one another but not $\Sigma_2$-elementarily equivalent (Theorem (A)). We then define a canonical extension $\Ccal_1$ of $\Acal$ in Section \ref{CanonicalExtensions} and prove that $\Ccal_1$ preserves additional (relative to $\Acal$) properties of the large cardinal embeddings (Theorem (B)). We close with open questions and possible extensions of this work in Section \ref{OpenQsAndExtensions}.

\section{Background}\label{Background}

Here we offer a brief overview of relevant results about left distributive algebras (LDAs) and their connections to large cardinal axioms. For a comprehensive treatment of the subject, especially self-distributivity and braids, see \cite{Dehornoy:2000}. For a concise survey, see \cite{LaverMiller:2013}.

\subsection{Large Cardinals}\label{LCandLDABackground} 

As mentioned in the introduction, large cardinals are strong logical axioms in the form of cardinals that, if they exist, imply the consistency of ZFC. The large cardinal axioms are (as far as anyone has been able to prove) linearly ordered by consistency strength, and the largest of the large cardinals can be formulated to assert the existence of nontrivial elementary embeddings $j$ from the universe of sets $V$ to a transitive model of set theory $M$ that agrees with $V$ on a large collection of sets. Such elementary embeddings preserve much of the structure of the domain, $V$, and the closer the target model $M$ is to $V$ (or, more generally, the domain), the stronger the large cardinal assertion. Let us now be precise about the definition of an elementary embedding.

Recall that a language $\mathcal{L}$ may include symbols for constants, functions, and relations; an $\mathcal{L}$-structure is a set in which interpretations of the constants, functions, and relations have been assigned. An elementary embedding between $\mathcal{L}$-structures preserves \emph{all} first-order formulas in the language. The language of set theory $\{\in\}$ contains a single binary relation symbol $\in$ interpreted as set membership. An elementary embedding $j$ from $\langle V, \in \rangle$ to a transitive substructure $\langle M, \in \rangle$ preserves every first-order formula $\varphi(v_1, \ldots, v_n)$ in the language $\{ \in \}$ of set theory.

\begin{dfn} Say $j$ is an elementary embedding from $\langle V, \in \rangle$ to a transitive substructure $\langle M, \in \rangle$ if  and only if, for every first-order formula $\varphi(v_1, \ldots, v_n)$ and every tuple $a_1, \ldots, a_n$ of elements of $V$, $$V \models \varphi[a_1, \ldots, a_n] \Leftrightarrow M \models \varphi[j(a_1), \ldots, j(a_n)].$$
\end{dfn}

A nontrivial elementary embedding is one that is not the identity and thus must move some ordinal $\alpha$. We will consider only nontrivial elementary embeddings. For a nontrivial elementary embedding $j$, call the least cardinal $\kappa$ moved by $j$ the \emph{critical point} of $j$, denoted by $\crit(j)$. Letting $\kappa=\crit(j)$, observe that, by elementarity, $\kappa$ must be less than its image: $\kappa < j(\kappa)$. Repeatedly applying $j$ to the image of $\kappa$ under $j$ we get the \emph{critical sequence of $j$}, $\overrightarrow{\crit}(j)$: $$\kappa = \kappa_0, j(\kappa_0) = \kappa_1, \ldots, j(\kappa_n) = \kappa_{n+1}, \ldots.$$

Recalling the right power notation from the previous section, note that $\kappa_n = \crit j^{(n)}$. To make reference to the collection of all critical points of embeddings in $\Acal_j$, we define $$\crit \Acal_j = \{\kappa: \exists k \in \Acal_j  (\crit(k) = \kappa) \}.$$

It is natural to wonder how similar the target model $M$ can be to the whole universe of sets $V$. Maximal similarity would be for $M$ to be $V$ itself: Say $\kappa$ is a \emph{Reinhardt cardinal} if it is the critical point of a nontrivial elementary embedding $j: V \rightarrow V$.\footnote{The definition of a Reinhardt cardinal takes place in von Neumann--G\"odel--Bernays class theory, which allows quantification over classes, rather than in ZFC.} As we will see now, Reinhardt cardinals are not consistent with the Axiom of Choice.\footnote{It is not known whether Reinhardt cardinals under ZF + $\lnot \text{AC}$ are inconsistent, and there are many axioms that trade the Axiom of Choice for closure beyond the limit of compatibility with AC given by Kunen's Theorem. For more on this see, for example, \cite{Goldberg:2024,goldberg2022choiceless}.}

\begin{thm}[Kunen]\label{Kunen'sTheorem}
    Assuming ZFC is consistent, there are no Reinhardt cardinals. Indeed, assuming the Axiom of Choice, for no ordinal $\lambda$ does there exist a nontrivial elementary embedding $j:V_{\lambda+2} \rightarrow V_{\lambda+2}$.
\end{thm}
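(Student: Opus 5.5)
The plan is the classical proof via the Erdős–Hajnal $\omega$-Jónsson function. Suppose, for contradiction, that $j:V_{\lambda+2}\to V_{\lambda+2}$ is a nontrivial elementary embedding. Let $\kappa_0=\crit(j)$ and $\kappa_{n+1}=j(\kappa_n)$ be the critical sequence, and put $\eta=\sup_n \kappa_n$. (The first assertion, that there are no Reinhardt cardinals, follows immediately from the second: restricting $j:V\to V$ to $V_{\lambda+2}$ for a suitable $\lambda$ gives such an embedding.)

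\textbf{Step 1: $\eta\le\lambda$, and we may assume $\eta=\lambda$.} If $\eta>\lambda+2$ in the relevant sense, then some $\kappa_n\le\lambda<\kappa_{n+1}$. By elementarity, $j$ restricted to the part of $V_{\lambda+2}$ coded by ordinals below $\lambda$ lets us transfer the situation downward. The standard reduction, which I will cite rather than reprove, shows that $j(\eta)=\eta$ by continuity on $\omega$-sequences: $j(\langle\kappa_n\rangle)=\langle\kappa_{n+1}\rangle$, since $\omega$-sequences of ordinals below $\lambda$ lie in $V_{\lambda+1}$ and $j$ fixes $\omega$. Since $\eta\le\lambda$, $V_{\eta+2}\subseteq V_{\lambda+2}$ is definable from $\eta$, so $j$ restricts to a nontrivial elementary $j:V_{\eta+2}\to V_{\eta+2}$. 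Thus we may assume $\lambda=\eta$ is a strong limit of cofinality $\omega$.

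\textbf{Step 2: The Jónsson function.} Using AC, fix $f:[\lambda]^\omega\to\lambda$ such that for every $A\subseteq\lambda$ with $|A|=\lambda$, $f[[A]^\omega]=\lambda$ (Erdős–Hajnal). The construction is a transfinite diagonalization: enumerate the pairs $(A,\alpha)$ and use an equivalence relation on $[\lambda]^\omega$ (eventual agreement) to choose representatives. Note that $f$, viewed as a subset of $[\lambda]^\omega\times\lambda$, is a subset of $V_{\lambda+1}$, so $f\in V_{\lambda+2}$. The statement ``$f$ is $\omega$-Jónsson for $\lambda$'' quantifies only over subsets of $\lambda$ and elements of $V_{\lambda+1}$, so it holds in $V_{\lambda+2}$. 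Hence $j(f)$ is also $\omega$-Jónsson, since $j(\lambda)=\lambda$.

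\textbf{Step 3: Contradiction.} Let $A=j[\lambda]$. Because $\lambda=\sup\kappa_n$ and $j$ is increasing, $|A|=\lambda$, so there is $x\in[A]^\omega$ with $j(f)(x)=\kappa_0$. Write $x=j[y]$ with $y\in[\lambda]^\omega$. Since $\crit(j)>\omega$, we have $j(y)=j[y]=x$. Then $j(f)(x)=j(f)(j(y))=j(f(y))\in\rng(j)$. But $\kappa_0=\crit(j)\notin\rng(j)$, which is a contradiction.

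The main obstacle is Step 2: constructing the Jónsson function with AC, and verifying that the needed facts ($j(y)=j[y]$ for countable $y$, and $j$ applied to $f$ being meaningful) are witnessed inside $V_{\lambda+2}$. This is exactly where the ``$+2$'' is used, since $f$ is a subset of $V_{\lambda+1}$. In Step 1, the reduction to $\lambda=\eta$ also needs care: it requires that $j$ fix $\eta$ and preserve the definability of $V_{\eta+2}$.
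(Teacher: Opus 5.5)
The paper gives no proof of this theorem; it only cites \cite{Kamimori:1994}. Your outline is the standard Kunen argument via an Erd\H{o}s--Hajnal $\omega$-J\'onsson function from that literature, and Step~3 is right. But Step~2 rests on a false claim: $f$ is \emph{not} a subset of $V_{\lambda+1}$ when viewed as a set of ordered pairs. Since $\mathrm{cf}(\lambda)=\omega$, $[\lambda]^\omega$ contains cofinal sets $x$, which have rank exactly $\lambda$. So the pair $(x,\alpha)$ has rank $\lambda+2$, $f\notin V_{\lambda+2}$, and $j(f)$ is undefined. You cannot sidestep this by discarding cofinal $x$, because the witness $x\subseteq j[\lambda]$ produced in Step~3 need not be bounded.

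The repair is a flat coding: let $\tilde f=\{x\times\{\alpha\}: f(x)=\alpha\}$. Each $x\times\{\alpha\}\subseteq\lambda\times\lambda\subseteq V_\lambda$ (as $\lambda$ is a limit), so $\tilde f\subseteq V_{\lambda+1}$ and $\tilde f\in V_{\lambda+2}$. The relation ``$f(x)=\alpha$'' becomes ``$\exists z\,(z=x\times\{\alpha\}\wedge z\in\tilde f)$'', which $j$ preserves. Hence $j(\tilde f)$ codes some $g:[\lambda]^\omega\to\lambda$ with $g(j(y))=j(f(y))$. Moreover $g$ is $\omega$-J\'onsson in $V$, because that property only quantifies over $V_{\lambda+1}$. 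Step~3 then goes through with $g$ in place of $j(f)$. This coding, not ``$f\subseteq V_{\lambda+1}$'', is where the ``$+2$'' is used.

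Step~1 also needs rewriting. The case ``$\eta>\lambda+2$'' never arises, and the ``transfer downward'' sentence is not an argument. The actual reason is this: $\lambda+1$ is the largest ordinal of $V_{\lambda+2}$ and $\lambda$ is its predecessor, so $j$ fixes both. Inductively $\kappa_n<\lambda$ for all $n$, so $\eta\le\lambda$, and your continuity argument gives $j(\eta)=\eta$. You then need no restriction to $V_{\eta+2}$, which would require checking that $V_{\eta+2}$ is an element of $V_{\lambda+2}$ fixed by $j$. Just run Steps~2--3 with $\eta$ in place of $\lambda$ inside $V_{\lambda+2}$, since the coded $\tilde f$ lies in $V_{\eta+2}\subseteq V_{\lambda+2}$.

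Finally, your sketch of Erd\H{o}s--Hajnal mixes two constructions. Diagonalizing over all pairs $(A,\alpha)$ works here only because $\eta$ is a strong limit of countable cofinality. Then $2^\eta=\eta^\omega=|[A]^\omega|$ for every $A\in[\eta]^\eta$, so an unused $x\in[A]^\omega$ is always available. The finite-difference equivalence relation is what handles general $\lambda$, where $2^\lambda>\lambda^\omega$ is possible; it is not needed in your situation.
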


See \cite{Kamimori:1994} for several proofs of Kunen's Theorem. The first statement of Theorem \ref{Kunen'sTheorem} is the original form of Kunen's Theorem; the second is a corollary. A remarkable recent result of Schlutzenberg, \cite{schlutzenberg2025consistency}, shows that the theory $ZF + $``there is a non-trivial embedding $j: V_{\lambda+2}\rightarrow V_{\lambda+2}$'' is consistent relative to a very strong large cardinal hypothesis called $I_0$.

While there are limits to the closure of the target model if we assume ZFC, some amount of closure of an elementary embedding is guaranteed: For every nontrivial elementary embedding $j: V \rightarrow M$ with $M$ transitive and $\crit(j) = \kappa$, $M$ necessarily contains $V_{\kappa+1}$ as a subset. (See, for example, \cite{BrookeTaylorCramerMiller2024} for a proof of this fact.) 

A cardinal $\kappa$ that is the critical point of a nontrivial elementary embedding $j:V \rightarrow M$ is called a \emph{measurable cardinal}. If the image $M$ contains all of its own $\lambda$-sequences for some $\lambda > \kappa$, it is called \emph{$\lambda$-supercompact}. If it is closed under subsets of size $j(\kappa)$, it is called \emph{huge}. In the stated order, these axioms are progressively stronger in consistency strength, and there are many others between and beyond them. The axioms of interest to us here are those for which the domain and target models are both rank initial segments of the universe $V_{\delta}$ for some ordinal $\delta$. Such cardinals are larger than huge cardinals.

For $\lambda$ a limit ordinal of cofinality $\omega$, a nontrivial elementary embedding $j: V_{\lambda} \rightarrow V_{\lambda}$ is a \emph{rank-to-rank} embedding, also called an $I_3$ embedding. We frequently talk about the collection of all such embeddings, so we reiterate the following definition.

\begin{dfn}
    Suppose $\lambda$ is a limit ordinal of cofinality $\omega$. Define $\Ecal_\lambda$ to be the set of nontrivial elementary embeddings $j: V_{\lambda} \to V_{\lambda}$.
\end{dfn}

There are a number of ways to incrementally strengthen the rank-to-rank hypothesis without exceeding the known threshold at $\lambda+2$ for inconsistency with the Axiom of Choice. In essence, each level of the strengthening preserves progressively more complex formulas. We give the definitions of these levels below and will use some of them extensively in the sequel, but before stating them, we offer a brief review of the Levy hierarchy of formulas for first-order languages $\mathcal{L}$ and the language of set theory $\mathcal{L}_{\in}$.

For a first-order language $\mathcal{L}$, a quantifier-free formula is considered both $\Sigma_0$ and $\Pi_0$. If the formula $\psi$ is $\Sigma_n$, then the formula $\forall v_1 \ldots \forall v_k \psi$ is $\Pi_{n+1}$, and, likewise, if $\psi$ is $\Pi_n$, then $\exists v_1 \ldots \exists v_k \psi$ is $\Sigma_{n+1}$. 

For $\mathcal{L}_{\in}$, an existential formula $\varphi$ in which all quantifiers are bounded ($\exists v \in w$) is called $\Sigma_0$. Similarly, a universal formula $\varphi$ with only bounded quantifiers ($\forall v \in w$) is $\Pi_0$. If the formula $\psi$ is $\Sigma_n$, then the formula $\forall v_1 \ldots \forall v_k \psi$ is $\Pi_{n+1}$, and, likewise, if $\psi$ is $\Pi_n$, then $\exists v_1 \ldots \exists v_k \psi$ is $\Sigma_{n+1}$. 

\begin{dfn}
Say that an elementary embedding $j$ from $\mathcal{M}$ to $\mathcal{N}$ is $\Sigma_m$-elementary if for all $\Sigma_m$-formulas $\varphi(v_1, \dots, v_k)$ and all $x_1, \dots, x_k$ in $\mathcal{M}$, $$\mathcal{M} \models \varphi[x_1, \ldots , x_k]  \text{ if and only if } \mathcal{N} \models \varphi[j(x_1), \ldots, j(x_k)].$$
\end{dfn}

Two $\mathcal{L}$-structures $A$ and $B$ are \emph{$n$-equivalent}, denoted by $A \equiv_n B$ and also called \emph{$\Sigma_n$-equivalent}, if every $\Sigma_n$-sentence (equivalently, every $\Pi_n$-sentence) is true in $A$ if and only if it is true in $B$. Say $A$ and $B$ are \emph{elementarily equivalent}, denoted by $A \equiv B$, if every (first-order) sentence is true in $A$ if and only if it is true in $B$. 

In the case that $\mathcal{M}$ and $\mathcal{N}$ are sets, we similarly define $\Sigma^1_n$-elementary, using second-order quantifiers (that is, quantifying over subsets). 

It is now possible to define the strengthenings of I3 mentioned earlier. The embeddings here are given in order of increasing consistency strength.

\begin{description}
\item[Axiom I3:] There exists a nontrivial elementary embedding $j: V_\lambda \to V_\lambda$.
\item[Axiom I2:] There exists a nontrivial elementary embedding $j: V \to M$ for some transitive class $M$ such that the supremum of the critical sequence of $j$ is $\lambda$ and $V_{\lambda} \subseteq M$.
\item[$\Sigma_n^1$-elementary rank-to-rank] There exists a nontrivial $\Sigma_n^1$-elementary embedding $j: V_\lambda \to V_\lambda$.\footnote{Note that this equivalent to $j$ extending to a $\Sigma_n$-elementary embedding $\vlm \to \vlm$.}
\item[Axiom I1:] There exists a nontrivial (fully) elementary embedding $j: \vlm \to \vlm$. 
\item[Axiom I0:] There exists a nontrivial elementary embedding $$j: L(\vlm) \to L(\vlm).\footnote{$L(\vlm)$ is G\"{o}del's constructible universe built on top of $\vlm$. }$$
\end{description}

We now turn to the application operation.

\begin{dfn}[Application]
Let $j: V_{\lambda} \rightarrow V_{\lambda}$ be a rank-to-rank embedding. For $A \subseteq V_{\lambda}$, define $$j \cdot A = jA = j(A) = \bigcup_{\alpha < \lambda} j(A \cap V_{\alpha}).$$
\end{dfn}

Note that because $j$ itself is a subset of $V_{\lambda}$, the application operation so defined allows for the application of an embedding $j$ in $\Ecal_{\lambda}$ to itself. Furthermore, by elementarity, the application operation is left distributive.

Rank-to-rank embeddings can be naturally extended to embeddings on subsets of $V_{\lambda}$, namely to $\Sigma_0$-embeddings $j^+:V_{\lambda+1} \rightarrow V_{\lambda+1}$, through the application operation: Any $j: V_\lambda \to V_\lambda$ uniquely determines a function $j^+: \vlm \to \vlm$ by
$$j^+(A) = \bigcup_{\alpha < \lambda} j(A \cap V_\alpha),$$
for $A$ in $\vlm$.
We call $j^+$ \emph{the extension of $j$ to $\vlm$.}

For embeddings $j$ and  $k$ in $\Ecal_\lambda$, we refer to $j^+(k)$ as \emph{application to $k$ by $j$} and write $jk = j \cdot k = j^+(k)$. 

\subsection{Free Left Distributive Algebras}\label{sec:LDA}

For any cardinal $\kappa$, one can form the free left distributive term algebra $\Acal_{\kappa}$ on $\kappa$ generators and one binary operation $\cdot$ by forming all terms $A_{\kappa}$ in the generators and $\cdot$ and considering the algebra $\Acal_{\kappa}$ of equivalence classes of those terms under the left distributive law (LD). Two terms $u$ and $v$ in $A_{\kappa}$ are LD-equivalent in $\Acal_{\kappa}$ ($u \equiv_{LD} v$) if and only if one can be obtained from the other by a series of applications of the left distributive law: namely, $u$ can be obtained from $v$ by a series of substitutions of the form $a \cdot (b \cdot c) \leftrightarrow (a \cdot b) \cdot (a \cdot c)$. 

In the sequel we will follow convention and write $\mathcal{E}_{\lambda}$ for the collection of all nontrivial elementary embeddings from $V_{\lambda}$ to $V_{\lambda}$, where $\lambda$ is a limit cardinal of cofinality $\omega$, and also $ab$ for $a \cdot b$ and $a_1a_2a_3 \cdots a_{n-1}a_n$ for $((((a_1 \cdot a_2) \cdot a_3) \cdots a_{n-1}) \cdot a_n)$. Likewise we will write $\Acal = \Acal_1$ for the free left distributive algebra on a single generator,  and, for elementary embeddings $j$ and $k$ in $\mathcal{E}_{\lambda}$,  we will write $\Acal_j$ for $\Acal_{\{j\}}$  and $\Acal_{j,k}$ for $\Acal_{\{j,k\}}$ (and refrain from using $j$ and $k$ as natural numbers in this context).

Laver gave the first nontrivial representation of a free left distributive algebra \cite{Laver:1992}. Specifically, he showed that the algebra of embeddings generated by closing a single nontrivial rank-to-rank elementary embedding $j$ under the application operation (see subsection \ref{LCandLDABackground} for a definition) generates an algebra $\Acal_j$ isomorphic to the free left distributive algebra $\Acal$ \cite{Laver:1992}. To do so, he proved that the iterated left subterm relation $<_L$ is a linear order of $\Acal$ (where, for elements $a$ and $b$ of $\Acal$, $a <_L b$ means that there exist $c_1, \ldots c_n$ in $\Acal$ such that $b = ac_1 \cdots c_n$). He furthermore proved that $\Acal_j \cong \Acal$ and that the word problem for $\Acal$ is solvable: For every pair of words $u$ and $v$ in $\Acal$, exactly one of the following holds: $u <_L v$, $v <_L u$, or $u \equiv_{LD} v$. Laver's result also gives that $\Acal$ is left cancelative: $ab <_L ac$ if and only if $b <_L c$ and $ab = ac$ if and only if $b = c$. 

Laver obtained irreflexivity of $<_L$ from the large cardinal assumption, using the fact that there do not exist rank-to-rank embeddings $k, k_1, k_2, \ldots k_n$ in $\Ecal_{\lambda}$ such that $k = kk_1 \cdots k_n$. Because the algebra of embeddings (and in particular $\Acal_j$) is irreflexive, so too must be the free left distributive algebra. For connectedness ($a \leq_L b$ or $b \leq_L a$ for all $a$ and $b$ in $\Acal_j$), he proved the existence of a normal form and a lexicographic ordering $<_{Lex}$ on normal form terms that agrees with the left subterm relation $<_L$ and the subterm relation $<$. That normal form takes place in a conservative extension $\mathcal{P}$ of $\Acal$: For elements $u$ and $v$ of $\Acal$ and $R$ a relation in $\{<_L, \equiv_{LD}, <_{Lex}, <\}$, we have that $uRv$ holds of $u$ and $v$ as elements of $\Acal$ if and only if $uRv$ holds of $u$ and $v$ as elements of $\mathcal{P}$.

Intuitively, the algebra $\mathcal{P}$ is formed by freely adding a composition-like operation $\circ$ to the application operation, so that $\mathcal{P}$ satisfies the identities in $\Sigma$ for all $a, b$ and $c$ in $\mathcal{P}$: $$\Sigma = \{a \circ (b \circ c) = (a \circ b) \circ c,\  (a \circ b)c = a(bc), \  a(b \circ c) = ab \circ ac, \  a \circ b = ab \circ a\}.$$

The first two equations express that $\circ$ is associative and behaves like composition when interacting with the application operation; the third asserts that left multiplication is still a homomorphism of the algebra; and the final law is now sometimes called the ``braid law'' in reference to the connection between left distributive algebras and Artin's braid group.\footnote{In the braid groups, nonconsecutive generators $\sigma_i$ and $\sigma_j$ commute and consecutive ones satisfy $\sigma_i\sigma_{i+1}\sigma_i = \sigma_{i+1}\sigma_i\sigma_{i+1}$.}

For ease of reference, we summarize here several crucial results of Laver.

\begin{theorem}[Laver \cite{Laver:1992}]\label{Laver'sTheorem}
The algebra $\mathcal{P}$ is a conservative extension of $\Acal$: namely, if two terms in the language of $\Acal$ can be proved equal using $\Sigma$, then they can be proved using just the left-distributive law. (See also \cite{LaverMiller:2013}.)
\end{theorem}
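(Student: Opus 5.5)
We need to show that any equation between two $\Acal$-terms derivable from $\Sigma$ is already derivable from the left distributive law alone. The route is to build a model of $\Sigma$ into which $\Acal$ embeds faithfully, and then use freeness.

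Let $\mathcal{P}$ be the free $\Sigma$-algebra on one generator $x$. There is a canonical homomorphism $\iota:\Acal\to\mathcal{P}$ for the application operation, since $\Sigma$ proves the left distributive law: $a(bc) = (a\circ b)c = (ab\circ a)c = ab(ac)$. Conservativity is exactly injectivity of $\iota$. To show it, it suffices to find some algebra $\mathcal{M}$ with operations $\cdot,\circ$ satisfying $\Sigma$, together with an element $m$ such that the $\cdot$-subalgebra generated by $m$ is a free LD-algebra on $m$. The map $\mathcal{P}\to\mathcal{M}$ with $x\mapsto m$ then restricts on $\iota[\Acal]$ to the canonical map $\Acal\to\langle m\rangle_\cdot$, which is injective. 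Hence $\iota$ is injective.

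Under the rank-to-rank hypothesis, the natural choice is $\mathcal{M}=\Ecal_\lambda$, with $\cdot$ as application and $\circ$ as genuine composition, and $m=j$. I would check each identity of $\Sigma$ in turn.
\begin{itemize}
\item Associativity of composition is trivial.
\item $(a\circ b)c = a(bc)$ holds because $(a\circ b)^+ = a^+\circ b^+$ on $V_{\lambda+1}$, computing levelwise on the $c\cap V_\alpha$.
\item $a(b\circ c)=ab\circ ac$ follows from elementarity of $a$ applied to the composition relation, restricted to each $V_\alpha$ and unioned.
\item $a\circ b = ab\circ a$ is the standard computation $a(b(x)) = a(b)(a(x))$, again by elementarity.
\end{itemize}
We also need $\Ecal_\lambda$ to be closed under $\circ$ and application, which is immediate.

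The remaining and main obstacle is that $\langle j\rangle_\cdot=\Acal_j$ is free, which is Laver's theorem. His proof, however, itself uses $\mathcal{P}$: irreflexivity of $<_L$ comes from the embeddings, but connectedness comes from normal forms in $\mathcal{P}$. To avoid circularity, I would instead cite a ZFC route that gives a faithful $\Sigma$-model directly. Dehornoy's braid representation on $B_\infty$ provides this: there, $a\cdot b = a\,\mathrm{sh}(b)\sigma_1\mathrm{sh}(a)^{-1}$ and $a\circ b = a\,\mathrm{sh}(b)\sigma_1$ satisfy $\Sigma$, and the closure of $\sigma_1$ under the bracket is free by Dehornoy/Larue. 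Alternatively, one can accept freeness of $\Acal$ as realized by $\Acal_j$, given that the comparison property was established independently. Either way, verifying $\Sigma$ in the chosen model is routine, and the whole content lies in the freeness of the monogenerated subalgebra.
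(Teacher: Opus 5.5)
The paper gives no proof of this theorem; it only cites Laver and Laver--Miller, so your proposal has to stand on its own. Your reduction is sound: conservativity is exactly injectivity of $\iota\colon\Acal\to\mathcal{P}$, and it suffices to exhibit one algebra satisfying $\Sigma$ in which the $\cdot$-subalgebra generated by some element is free. You are also right that using $\Acal_j$ naively is circular, since Laver's linearity argument transfers comparisons from $\mathcal{P}$ back to $\Acal$ via conservativity.

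\textbf{Gap.} Your escape route is false. Take $a\cdot b=a\,\mathrm{sh}(b)\sigma_1\mathrm{sh}(a)^{-1}$ and $a\circ b=a\,\mathrm{sh}(b)\sigma_1$. Then $ab\circ a=a\,\mathrm{sh}(b)\sigma_1^2\neq a\circ b$. Also $(1\circ 1)\circ 1=\sigma_1^2$ while $1\circ(1\circ 1)=\sigma_2\sigma_1$; already their permutations differ. So neither the braid law nor associativity holds, and $B_\infty$ with these operations is not a model of $\Sigma$.

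What remains is your fallback through $\Ecal_\lambda$. It needs a rank-to-rank embedding, plus Dehornoy's independently proved comparison property, to make $\Acal_j$ free. That gives only a conditional version of the statement. The paper asserts this theorem with no large cardinal hypothesis, unlike the very next theorem. The fallback also derives an elementary fact from the deepest results in the subject.

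\textbf{Missing idea.} No external model is needed, and freeness is not the real obstacle: build the $\Sigma$-algebra from $\Acal$ itself. Let $M$ consist of nonempty finite sequences of elements of $\Acal$, modulo the equivalence generated by replacing an adjacent pair $(\ldots,a,b,\ldots)$ by $(\ldots,ab,a,\ldots)$. Let $\circ$ be concatenation. For $\vec a=(a_1,\ldots,a_n)$ write $\vec a*c=a_1(a_2(\cdots(a_nc)))$, and put $\vec a\cdot(b_1,\ldots,b_m)=(\vec a*b_1,\ldots,\vec a*b_m)$.

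The left distributive law in $\Acal$ is exactly what makes this well defined. The identity $a(bd)=ab(ad)$ makes $\vec a*c$ invariant under moves in $\vec a$. The identity $\vec a*(bc)=(\vec a*b)(\vec a*c)$ turns a move in the right argument into a move. Associativity, $(p\circ q)r=p(qr)$ and $p(q\circ r)=pq\circ pr$ are then immediate. The braid law $\vec a\circ\vec b\sim(\vec a\cdot\vec b)\circ\vec a$ follows by sliding each $b_i$ leftward past $a_n,\ldots,a_1$, one move at a time.

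Since moves preserve length, the class of a one-term sequence $(a)$ is a singleton. So $a\mapsto(a)$ is an injective LD-homomorphism onto the $\cdot$-subalgebra generated by $(x)$. Your own reduction then gives conservativity outright, with no large cardinals and no comparison theorem.
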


\begin{theorem}\label{Laver'slinearitytheorem} Suppose there exists a nontrivial elementary embedding from $V_{\lambda} \rightarrow V_{\lambda}$.
\begin{enumerate}[(a)]
\item There exists a normal form for terms in $\Acal$, and $<_{Lex}$, the lexicographic ordering on normal form words is a linear ordering of $\Acal$.
\item The lexicographic ordering $<_{Lex}$, the iterated left subterm relation $<_L$, and the subterm relation $<$ all agree\footnote{In this paper we will actually use the \emph{division form theorem} \ref{thm:DFthm}. The existence of the division form was first proven by Laver \cite{Laver:1990} as a consequence of the existence of the normal form; the result that all words in $\Acal$ have a division form equivalent was later proven without relying on the normal form \cite[Theorem 28]{Laver:1992}, \cite{Miller:2007, LaverMiller:2013}.}
\end{enumerate}
\end{theorem}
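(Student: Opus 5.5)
The plan is to follow Laver's original strategy. The rank-to-rank embedding supplies irreflexivity of $<_L$, and a purely syntactic normal form argument, carried out in the extension $\mathcal{P}$, supplies comparability.

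\textbf{Irreflexivity.} Fix $j\in\Ecal_\lambda$ and let $\Acal_j$ be its closure under application. The identities in $\Sigma$ hold in the algebra of embeddings when $\circ$ is interpreted as composition, by elementarity together with $(j\circ k)^+ = j^+\circ k^+$. Hence there is a surjective homomorphism $\mathcal{P}\to\mathcal{P}_j$ that sends the generator to $j$.

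If $a = ac_1\cdots c_n$ held in $\Acal$, mapping to embeddings would give $k = kk_1\cdots k_n$. This is impossible, because the critical points force $\crit(k) < \crit(kk_1\cdots k_n)$ in the appropriate sense. So $<_L$ is irreflexive on $\Acal$. By Theorem \ref{Laver'sTheorem}, the same holds for the relation computed in $\mathcal{P}$.

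\textbf{Normal form.} Next I would define normal form words, following Laver. A word is in normal form if it is written as an iterated left product
\[
w = w_0\, w_1 \cdots w_m ,
\]
where each $w_i$ is a product of the form $j^{(p)}$ or a composite such as $j_{(n)}$, and the divisibility conditions make each factor "fit": each successive factor is left-divisible by the preceding prefix. Existence of a normal form equivalent for every term would be proved by induction on term complexity. I would repeatedly use the identities in $\Sigma$, chiefly $a(b\circ c)=ab\circ ac$ and $a\circ b = ab\circ a$, to push compositions outward and to reduce right arguments below the appropriate power $j_{(n)}$. The identity $a(b\circ c)=ab\circ ac$ justifies that left multiplication preserves the normal form shape.

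\textbf{Linearity and agreement of orders.} For (a) and (b) I would show, for normal form words $u,v$:
\begin{enumerate}[(i)]
\item if $u <_{Lex} v$ then $u <_L v$ in $\mathcal{P}$, by exhibiting the witnesses $c_i$ explicitly from the first point of disagreement;
\item a subterm of $v$ lies $<_L$-below $v$ after normalizing, so $< \,\subseteq\, <_L$;
\item $<_L$ is transitive by definition.
\end{enumerate}
From (i), any two distinct normal forms are $<_L$-comparable. Combined with irreflexivity, this shows that distinct normal forms are inequivalent, so normal forms are unique. Hence $<_{Lex}$ is a linear order on $\Acal$ that coincides with $<_L$. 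The subterm relation is contained in $<_L$, hence in $<_{Lex}$. Conservativity (Theorem \ref{Laver'sTheorem}) transfers all of this from $\mathcal{P}$ back to $\Acal$.

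\textbf{Main obstacle.} I expect the hardest step to be the existence half of the normal form. Its induction is not on length, and termination of the rewriting must itself be argued. The argument I would try first uses the well-foundedness of critical points of elements of $\Acal_j$ under the embedding representation. The ordinals $\crit(k)$ for $k\in\Acal_j$ give a well-founded rank in which each rewriting step decreases. Concretely, I would induct on the pair $(\crit\text{ data}, \text{length})$, mapped through $\Acal_j$.
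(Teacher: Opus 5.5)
The paper does not prove this theorem. It attributes it to Laver and only summarizes the architecture you adopt: irreflexivity from embeddings, comparability from a normal form in $\mathcal{P}$, and transfer via conservativity. Measured against that, the one large-cardinal step you actually argue is wrong, because critical points do not increase along left products. Let $\kappa_0,\kappa_1,\kappa_2,\ldots$ be the critical sequence of $j$. Then $\crit(jj\cdot j) = jj(\kappa_0) = \kappa_0 < \kappa_1 = \crit(jj)$. Also $\crit(jjj) = \crit(j)$, so no comparison of critical points can even separate $j$ from $jjj$; they are told apart only by values such as $jjj(\kappa_0) = \kappa_2 \neq \kappa_1 = j(\kappa_0)$.

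Your critical-point reasoning handles only $n=1$, where $\crit k = k(\crit k_1)$ is impossible. The general fact $k \neq kk_1\cdots k_n$ is Laver's irreflexivity lemma, which you must cite (as the paper does) or prove by a genuinely different argument. Also, Theorem \ref{Laver'sTheorem} cannot give you irreflexivity in $\mathcal{P}$. It only transfers equations between $\Acal$-terms, while your normal forms and the witnesses $c_i$ are $\mathcal{P}$-terms. You need to map $\mathcal{P}$ into the embeddings with $\circ$ read as composition. That is the homomorphism you set up but never use.

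The comparability half, which is the real content of (a) and (b), is not supplied. You never define the normal form. The one condition you do state, that each factor is left-divisible by the preceding prefix, is the reverse of the normality condition in Definition \ref{defn:normality}. There, each factor $a_i$ lies $\leq_L$-below the earlier prefix $a_0a_1\cdots a_{i-2}$.

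Step (i) is the whole difficulty, and reading off witnesses at the first disagreement does not establish it. Suppose $u = pa_ia_{i+1}\cdots a_m$ and $v = pb_ib_{i+1}\cdots$, with common prefix $p$ and (inductively) $a_i <_L b_i$. Then left distributivity gives only $pa_i <_L pb_i$. Showing that the longer word $pa_ia_{i+1}\cdots a_m$ stays $<_L$-below $pb_i$ is exactly where a correct normality condition and an induction are needed. Likewise (ii), for instance $b <_L ab$, is one of the nontrivial assertions of (b), not an observation.

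Finally, your termination measure cannot work as stated. A rewriting step replaces a term by an equivalent one, which maps to the same embedding. So any ordinal read off from the embedding that the whole term represents never changes. You would have to specify a rank on the constituents of the term and prove that each reduction lowers it.
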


\begin{corollary}
\begin{enumerate}[(a)]
   \item The algebra $\Acal_j$ formed by closing a single rank-to-rank embedding $j$ under the binary application operation is isomorphic to the free left distributive algebra, $\Acal$;
    \item The iterated left subterm relation $<_L$ linearly orders $\Acal$;
    \item The word problem for $\Acal$ is solvable.
\end{enumerate}
\end{corollary}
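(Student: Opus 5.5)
The plan is to obtain all three parts of the corollary from Theorem \ref{Laver'slinearitytheorem} together with Laver's irreflexivity fact for rank-to-rank embeddings. I would prove (b) first, then (a), then (c). Throughout, fix a nontrivial $j \in \Ecal_\lambda$, and let $\pi: \Acal \to \Acal_j$ be the unique homomorphism sending the generator to $j$. This map exists and is onto because application in $\Ecal_\lambda$ is left distributive, and $\Acal_j$ is by definition the closure of $\{j\}$ under application.

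For (b), I would first establish irreflexivity of $<_L$ on $\Acal$. Suppose $a <_L a$ held, say $a = a c_1 \cdots c_n$. Applying $\pi$ gives $\pi(a) = \pi(a)\pi(c_1)\cdots\pi(c_n)$ in $\Ecal_\lambda$, which contradicts Laver's fact that no $k = k k_1 \cdots k_n$ holds for rank-to-rank embeddings. Transitivity of $<_L$ is immediate from its definition. Connectedness, meaning that $a \le_L b$ or $b \le_L a$ for all $a,b$, comes from Theorem \ref{Laver'slinearitytheorem}: $<_{Lex}$ linearly orders normal forms, and it agrees with $<_L$. So $<_L$ is a strict linear order on $\Acal$.

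For (a), it suffices to show that $\pi$ is injective. Take $a \neq b$ in $\Acal$. By (b), we may assume $a <_L b$, so $b = a c_1\cdots c_n$. Then $\pi(b) = \pi(a)\pi(c_1)\cdots\pi(c_n)$. If $\pi(a) = \pi(b)$, we would again get a forbidden equation $k = k k_1\cdots k_n$ in $\Ecal_\lambda$. Hence $\pi(a)\ne\pi(b)$, and $\pi$ is an isomorphism $\Acal \cong \Acal_j$.

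For (c), I would use the trichotomy: for any two words $u,v$, exactly one of $u<_L v$, $v<_L u$, $u\equiv_{LD} v$ holds. The LD-equivalence class of a word is recursively enumerable, since we can search through chains of LD-substitutions. The relation $u <_L v$ is also recursively enumerable, since we can search for $c_1,\dots,c_n$ together with an LD-derivation of $v \equiv_{LD} u c_1\cdots c_n$. Running both searches in parallel must terminate, by (b) and irreflexivity. If we find $u<_L v$ or $v<_L u$, then $u\not\equiv_{LD} v$; otherwise the first search finds that $u\equiv_{LD} v$.

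The main obstacle is connectedness in (b), and it is entirely carried by the normal form part of Theorem \ref{Laver'slinearitytheorem} and the conservativity of $\mathcal{P}$ (Theorem \ref{Laver'sTheorem}). Those results are assumed here, so the remaining step is only to ensure that the comparison of normal forms transfers back to $\Acal$, which is exactly what the conservativity theorem provides.
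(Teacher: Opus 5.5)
Your proposal is correct and follows essentially the route the paper indicates. The paper gives no separate proof; its discussion before Theorem \ref{Laver'sTheorem} gets irreflexivity of $<_L$ from the nonexistence of $k = kk_1\cdots k_n$ in $\Ecal_\lambda$ and connectedness from the normal form and $<_{Lex}$ of Theorem \ref{Laver'slinearitytheorem}, and it draws freeness of $\Acal_j$ and solvability of the word problem from the resulting trichotomy, which is exactly what your injectivity argument for $\pi$ and your parallel r.e.\ search carry out.
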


Laver later discovered another normal form called the division form that is more useful for applications \cite{Laver:1990}. It is used in the sequel, so we describe it here. The idea of the division form goes approximately like this. Suppose we have words $a$ and $b$ in $\Acal$, where $a$ is an iterated left subterm of $b$ ($a <_L b$). Then find $c_1$, the $<_L$-greatest member of $\Acal$ such that $ac_1 \leq b$. If we do not have equality, find the $<_L$-greatest word $c_2$ such that both $ac_1c_2 \leq_L b$ and $c_2 \leq_L a$. Continue in this manner until we have $b = ac_1 \ldots c_n$, where, for each $1 \leq i \leq n$, $c_i \leq_L c_0c_1 \ldots c_{i-2}$ (letting $a = c_0$). The almost-descending condition is called \emph{normality} and is defined below. The division form theorem \cite{Laver:1990} says that we can find a unique representation of this form for every $u$ in $\Acal$, but the algorithm cannot be carried out in $\Acal$: we must go to $\mathcal{P}$ to state and execute the division form algorithm. This is because $\Acal$ does not contain the least upper bounds of all sequences of words in $\Acal$. For example, for $a, b \in \Acal$, the least upper bound of the sequence $a, b, aba, aba(ab), aba(ab)(aba), \ldots$ (called the iterates of $a$ and $b$) is $a \circ b$, which is not itself in $\Acal$.

Consider, for example, the term $xx(xx)$, where $a = c_0 = x$. Then $c_1 = x$, but $c_2 = xx$ does not satisfy $c_2 \leq_L c_0$. Note that if we use $a = xx$, the term $xx(xx)$ does satisfy the normality condition. This means that normality is a condition that depends on $a$, and we shall define $a$-normal below. Furthermore, normality is a property of literal terms in $A$ (respectively, $P$, $A_k$, and $P_k$), not on equivalence classes in $\Acal$. 

Write $a = a_0a_1 \cdots a_{n-1} \ast a_n$ to denote a parsing of the term $a$, where $\ast$ is  either $\cdot$ or $\circ$: In particular, $a = a_0a_1 \cdots a_{n-1} \cdot a_n$ or $a = a_0a_1 \ldots a_{n-1} \circ a_n$.

To state the division form theorem more formally, we need a definition.

\begin{definition}\label{defn:normality}
    The representation of $w = a_0a_1 \cdots a_{n-1} \ast a_n$ in $P$ is said to be \emph{$a_0$-normal} with respect to the binary relation $<_L$ if $a_2 \leq _L a_0$, $a_3 \leq _L a_0a_1$, and $a_i \leq_L a_0a_1 \cdots a_{i-2}$ for all $i$ such that $2 \leq i \leq n$, and if $n \geq 2$ and $\ast = \circ$, then $a_n <_L a_0 a_1 \cdots a_{n-2}$.
\end{definition}

The last condition of normality is to ensure the uniqueness of the representation by preventing applications of the braid law $a \circ b = ab \circ a$. 

\begin{definition}[Division Form]\label{defn:DF}
    Define DF, the set of division form terms, to be the set of hereditarily $x$-normal terms in $A$. Namely, DF $\subseteq P$ is the smallest set such that $x$ is in DF and if $a_1, \ldots, a_n$ are in DF and $w = xa_1 \cdots a_{n-1} \ast a_n$ is $x$-normal, then $w$ is in DF. Write $|w|$ for the division form of $w$ and $|w|^p$ for the $p$-division form of $w$ (defined below).
\end{definition}

\begin{definition}[$p$-Division Form]
    For any word $p$ in $\mathcal{P}$, define $p$-DF $\subseteq P$, the set of $p$-division form terms, to be the smallest set such that:
    \begin{itemize}
        \item If $w \leq_L p$ then $w$ is in $p$-DF if and only if $w$ is in DF, and
        \item If $p <_L w$, $w = |p|a_1a_2 \cdots a_{n-1} \ast a_n$, $w$ is in $p$-DF if and only if $w$ is $p$-normal, $|p|$ is in DF and each $a_i$ is in $p$-DF.
    \end{itemize}
\end{definition}

We can now state the division form theorem.

\begin{thm}[Division Form Theorem (Laver \cite{Laver:1990})]\label{thm:DFthm}
    Every word in $\Acal$ has a unique division form representation.
\end{thm}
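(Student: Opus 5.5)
We prove existence and uniqueness separately, working throughout in the conservative extension $\mathcal{P}$ (Theorem~\ref{Laver'sTheorem}). We assume the linearity of $<_L$ on $\Acal$ and on $\mathcal{P}$ (Theorem~\ref{Laver'slinearitytheorem}). This gives irreflexivity from the rank-to-rank hypothesis and connectedness, and hence left cancellation. Because $\mathcal{P}$ is conservative over $\Acal$, a $\circ$-free term shown equal in $\mathcal{P}$ to a DF term is $\equiv_{LD}$ to it.

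\textbf{Existence.} We prove by well-founded induction that every $w$ with $x\leq_L w$ has a division form. The induction runs on $<_L$ restricted to the set of elements $\leq_L w$ reachable by the division algorithm, together with the subterm relation. Given $w$ with $x<_L w$, we run the greedy algorithm sketched before Definition~\ref{defn:normality}. Put $c_0=x$. At stage $i$, choose $c_i$ to be the $<_L$-largest element with $c_i\leq_L c_0\cdots c_{i-2}$ and $c_0\cdots c_{i-1}c_i\leq_L w$. This maximum exists in $\mathcal{P}$ as a supremum of iterates: for instance, $a\circ b$ bounds the iterates $a,b,aba,\dots$, which is exactly why $\circ$ is needed. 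If $c_0\cdots c_i=w$ we stop. If the maximum is attained only as a limit, we close with $\circ$, and the braid law $a\circ b=ab\circ a$ lets us arrange the strict final condition $a_n<_L a_0\cdots a_{n-2}$.

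The key estimates are the following. First, each remainder is strictly $<_L$-smaller than the previous bound, so by normality the sequence $c_2,c_3,\dots$ is bounded by a $<_L$-decreasing sequence. Second, by irreflexivity, together with the fact that the left divisors of $w$ below $w$ form a well-ordered initial segment, the process terminates. Finally, we apply the induction hypothesis to each $c_i$, which is strictly smaller than $w$, to replace it by its division form. This makes the whole term hereditarily $x$-normal.

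\textbf{Uniqueness.} Suppose $w=xa_1\cdots a_{n}$ and $w=xb_1\cdots b_{m}$ are both $x$-normal with DF entries. We show by induction that $a_i=b_i$ for each $i$. The crucial lemma is that in an $x$-normal representation, each $a_i$ is the $<_L$-maximal element satisfying $xa_1\cdots a_{i-1}a_i\leq_L w$ and the normality bound. In other words, normal forms are exactly the outputs of the greedy algorithm. We prove this lemma by showing that if some larger $c$ worked, then
\[xa_1\cdots a_{i-1}c>_L xa_1\cdots a_{i-1}a_i a_{i+1}\cdots a_n.\]
Left cancellation reduces this to $c>_L a_ia_{i+1}\cdots a_n$. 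This in turn follows from the normality bounds $a_{k}\leq_L xa_1\cdots a_{k-2}$, using the standard fact that $u\leq_L v$ implies $vu'\leq_L$-behaves monotonically, i.e.\ iterated products under the normality bound cannot climb past the next admissible element. Granting the lemma, the two representations agree entry by entry. Since the entries are themselves in DF, they are equal as literal terms by the induction on term size. The $\circ$ clause in normality rules out the remaining braid-law ambiguity at the last position.

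\textbf{Main obstacle.} We expect the hardest step to be the maximality lemma for normal representations: showing that the normality condition forces $a_ia_{i+1}\cdots a_n<_L c$ whenever $a_i<_L c\leq_L xa_1\cdots a_{i-2}$. This needs a careful induction on $n-i$ using the identities $\Sigma$, in particular $(a\circ b)c=a(bc)$ and $a(b\circ c)=ab\circ ac$, together with linearity. Termination in the existence part is the second delicate point. We would first try to derive it from irreflexivity by embedding the descending chain into a chain of critical points in $\Acal_j$. These critical points are ordinals, so they cannot descend forever.
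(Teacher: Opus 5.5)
The paper gives no proof of this theorem. It quotes Laver, who obtained the division form from his normal form theorem for $\mathcal{P}$, and a footnote cites later direct proofs. Judged on its own terms, your existence argument has a genuine gap, in two places.

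\emph{Existence of the greedy maxima.} You never show that the greedy maxima exist. Nothing guarantees that $\{c : c\le_L c_0\cdots c_{i-2},\ c_0\cdots c_{i-1}c\le_L w\}$ has a largest element in $\mathcal{P}$. The iterates example exhibits one particular supremum, not this one. The existence of such greatest quotients is essentially what the theorem asserts, so it cannot be assumed.

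\emph{Termination and the hereditary step.} These rest on a false premise: $<_L$ is not well-founded on $\Acal$, even below a fixed element. Put $x^{[1]}=x$, $x^{[n+1]}=x^{[n]}x$, and $v_n=x^{[n]}(xx)$ for $n\ge 2$. Left distributivity gives $v_n=x^{[n+1]}x^{[n+1]}$. Distributing over $x^{[n+1]}=(xx)x\cdots x$ then gives $v_n=v_{n+1}\,x^{[n+2]}\cdots x^{[n+2]}$. Hence $x(xx)=v_2>_L v_3>_L v_4>_L\cdots$, while also $x^{[n]}<_L v_n\le_L v_2$ for every $n$.

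So the iterated left divisors of $x(xx)$ contain both an infinite descending and an infinite ascending chain. There is no well-ordered initial segment to use, and no well-founded induction on $<_L$ with which to put each $c_i$ into division form.

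Your ``key estimate'' also points the wrong way. The normality bounds $c_0\cdots c_{i-2}$ increase with $i$, and the partial products form an increasing chain below $w$. The $x^{[n]}$ show such chains can be infinite, so finiteness of the greedy run is exactly what has to be proved.

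Critical points do not rescue this, because they are not monotone along $<_L$. In $\Acal_j$, the chain $j<_L jj<_L (jj)j$ has critical points $\kappa_0,\kappa_1,\kappa_0$. Existence needs a different organization that does not rely on a chain condition for $<_L$. Two options are deriving it from the normal form theorem, as Laver did, or a direct induction on term structure.

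\emph{Uniqueness.} Your strategy here is the right one: show normal representations compare lexicographically, then use irreflexivity. But the reduction you state is invalid. The term $xa_1\cdots a_{i-1}a_ia_{i+1}\cdots a_n$ is left-associated, i.e.\ it is $((pa_i)a_{i+1})\cdots a_n$ with $p=xa_1\cdots a_{i-1}$, not $p\cdot(a_ia_{i+1}\cdots a_n)$. So $p$ cannot be cancelled, and ``$c>_L a_ia_{i+1}\cdots a_n$'' is neither what is needed nor true in general.

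The correct lemma is: if $a_i<_L c$ then $xa_1\cdots a_n<_L pc$. It follows by downward induction on $i$.
\begin{itemize}
\item The case $i=n$ is monotonicity of left multiplication.
\item For $i<n$, write $c=a_id_1\cdots d_r$, so $pc=(pa_i)(pd_1)\cdots(pd_r)$. Normality gives $a_{i+1}\le_L p<_L pd_1$. The case $i+1$, applied with $pd_1$ in place of $c$, then yields $xa_1\cdots a_n<_L (pa_i)(pd_1)\le_L pc$.
\end{itemize}
With this lemma, and the usual care with a terminal $\circ$, two normal representations of the same $w$ that first differ at index $i$ give $w<_L w$. So uniqueness goes through once repaired; the real gap is existence.
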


The definitions of normality and of the division forms can be extended to the many generator case. (See \cite{Miller:2016} for definitions and related results.)

Through the connections between free LDAs and braid groups, Dehornoy was able to prove the solvability of the word problem for $\Acal$ without the large cardinal hypothesis (at the expense of the normal form). Dehornoy worked in an extension of Artin's braid group on infinitely many generators $B_{\infty}$ and defined a left distributive operation $[\  ]$. He then showed that a subset of $B_{\infty}$ formed by closing a single element of $B_{\infty}$ under the operation $[\  ]$ gives rise to an algebra isomorphic to $\Acal$ \cite{Dehornoy:1994}.

Importantly, he also proved more. In the case of free LDAs on more than one generator, $<_L$ is no longer a linear ordering of $\Acal_n$. Say that $u$ and $v$ in $\Acal_n$ have a \emph{variable clash}, denoted $u \nsim v$, if and only if there exist distinct variables $x$ and $y$ such that $ax \leq_L u$ and $ay \leq_L v$ for a (possibly empty) subterm $a$ of $u$ and $v$.

The confluence algorithm compares terms from $\Acal$ by (repeated) substitution of subterms of the form $u(vw)$ by those of the form $uv(uw)$. Dehornoy proved that the confluence algorithm terminates. In addition to resolving which of $a <_L b$, $b <_La$, and $a \equiv_{LD} b$ hold in the one generator case of $\Acal$, confluence also gives information about the iterated left subterm relation $<_L$ on free LDAs with more than one generator. Specifically, he proved that, for every natural number $n$,  $\Acal_n$ is \emph{confluent}. (See \cite{Dehornoy:1989} and \cite[Theorem 3.14]{Dehornoy:2000}). This means that two terms $a$ and $b$ in $\Acal_n$ are equivalent if and only if there exists a term $c$ such that both $a$ and $b$ can be transformed into $c$ using only forward applications of the left distributive law; namely, by replacing subwords of the form $u(vw)$ with $uv(uw)$. Note that we can define $\mathcal{P}_n$ analogously to $\mathcal{P}$ by freely adding a composition operation $\circ$ subject to the identities in $\Sigma$ above. 

In the sequel, we rely frequently on the following quadrichotomy theorem of Dehornoy, which is the best possible generalization of the linearity of $<_L$ to LDAs with more than one generator.

\begin{theorem}[Dehornoy \cite{Dehornoy:2000}]\label{thm:quadrichotomy}
For every natural number $k$ and every pair $u$, $v$ of elements of $P_k$, exactly one of the following holds:
$$u = v, \quad u <_L v ,\quad v <_L u ,\quad u \nsim v.$$
\end{theorem}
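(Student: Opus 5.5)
The statement has two halves: at least one of the four alternatives holds (\emph{comparison}), and at most one holds (\emph{exclusivity}). I would prove them separately, first for the free LDA $\Acal_k$ and then lift to $\mathcal{P}_k$ using the identities in $\Sigma$.

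For comparison in $\Acal_k$, I would work at the level of literal terms and use Dehornoy's confluence of $\Acal_k$ together with the fact that forward LD-expansion only lengthens left branches. The claim I would aim for is the following. For any terms $u,v$ there are LD-expansions $u'\equiv_{LD}u$ and $v'\equiv_{LD}v$ such that, reading the left branches of $u'$ and $v'$ from the root downward, one of three things happens:
\begin{enumerate}[(i)]
\item they agree completely, giving $u=v$;
\item one is a proper prefix of the other as a sequence of left subterms, giving $u<_L v$ or $v<_L u$;
\item they first differ at a point where a common prefix $a$ is followed by distinct variables $x\neq y$, giving $u\nsim v$.
\end{enumerate}
The proof would be by induction on a well-founded complexity of the pair (say the sum of sizes, refined lexicographically). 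The inductive step uses the absorption phenomenon: if the left branches first differ at non-variable subterms $c$ and $d$ following a common prefix $a$, apply the induction hypothesis to $c,d$. Then use left distributivity, $a\cdot(c\,e)=ac\cdot ae$, to push the resulting comparison of $c$ and $d$ up into a comparison of $ac\cdots$ and $ad\cdots$. For $\mathcal{P}_k$, I would eliminate $\circ$ at the top of a left branch using $(a\circ b)c=a(bc)$ and the braid law $a\circ b=ab\circ a$. These rewrite any $\circ$-term so that its left-branch structure is governed by iterates of $a$ and $b$, which reduces comparison back to the $\Acal_k$ case.

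For exclusivity, $u<_L v$ together with $v<_L u$ gives $u<_L u$. Likewise $u=v$ together with $u<_L v$ gives $u<_L u$. So the core is irreflexivity of $<_L$ in $\mathcal{P}_k$. I would obtain it either from Laver's argument, using a representation in $\Ecal_\lambda$ where $k=kk_1\cdots k_n$ is impossible, or, to stay in ZFC, from Dehornoy's braid representation, which maps $<_L$ into a strict (acyclic) order on $B_\infty$. To separate $u\nsim v$ from the other three alternatives, I would use a homomorphism that distinguishes variables. One option is Larue's braid representation of $\Acal_k$; another is an LD-algebra on which $x$ and $y$ act with distinguishable left-divisibility behaviour. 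Under such a map, $ax\leq_L u$, $ay\leq_L v$ and $u\leq_L v$ would force $ax$ and $ay$ to be comparable, and then left cancellation yields $x$ and $y$ comparable, which is impossible.

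I expect two steps to be the main obstacles. The first is exclusivity of the clash case, since the natural collapse $\Acal_k\to\Acal$ destroys exactly the variable information; there I would first try Larue's braid representation. The second, equally delicate, is the termination of the inductive comparison once $\circ$ is present. There I would rely on the division form in $\mathcal{P}_k$, Theorem~\ref{thm:DFthm} and its many-generator version, to supply a canonical representative on which the induction can run.
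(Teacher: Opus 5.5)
The paper gives no proof of this statement; it is quoted from Dehornoy. Your plan therefore has to stand on its own, and its first step fails.

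\textbf{The reduction to $\Acal_k$ is false.} The four-way alternative does not hold in $\Acal_k$ when $\le_L$ and the clash witness $a$ are taken inside $\Acal_k$. So your trichotomy (i)--(iii) for LD-expansions cannot be proved. Take $u=x(xx)$ and $v=x(xy)$ in $\Acal_2$.
\begin{enumerate}
\item They are distinct: the homomorphism onto the LD-system $\{x,y\}$ with $st=t$ sends them to $x$ and $y$.
\item Neither is $<_L$ the other: collapsing $y\mapsto x$ sends both to $x(xx)$, and would give $x(xx)<_L x(xx)$, contradicting irreflexivity in $\Acal$.
\item There is no clash inside $\Acal_2$. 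Since $u$ involves only $x$, a clash would need $ay\le_L v$. The case of empty $a$ is excluded because the leftmost variable of $v$ is $x$. For nonempty $a$, let $m_y(t)$ be the least number of right-steps from the root of $t$ to an occurrence of $y$. The move $a(bc)\leftrightarrow(ab)(ac)$ keeps one copy of every leaf of $a$, $b$, $c$ at the same number of right-steps and only adds copies with more. Hence $m_y$ is an LD-invariant. But $m_y(x(xy))=2$, while $m_y(a\,y\,c_1\cdots c_n)\le 1$.
\end{enumerate}
In $\mathcal{P}_2$, however, $(a\circ b)c=a(bc)$ gives $u=(x\circ x)x$ and $v=(x\circ x)y$, so $u\nsim v$ with $a=x\circ x$. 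Composition is exactly what supplies clash witnesses: in general $c_0(c_1(\cdots(c_nz)))=(c_0\circ\cdots\circ c_n)z$ exposes the rightmost variable. That is why the theorem is stated for $P_k$. Eliminating $\circ$ to reduce to $\Acal_k$ runs in the wrong direction.

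\textbf{Two further steps would fail even after this repair.}
\begin{enumerate}
\item Your comparison induction has no decreasing measure. The push-up step $a(cg_1\cdots g_m)=(ac)(ag_1)\cdots(ag_m)$ enlarges the terms. Termination of exactly this kind of comparison is the deep part of the one-generator theory: absorption plus confluence in Dehornoy's approach, normal forms under large cardinals in Laver's.
\item Your exclusivity argument is circular. Inferring from $ax\le_L v$ and $ay\le_L v$ that $ax$ and $ay$ are $<_L$-comparable is the assertion that $v\nsim v$ fails, which is part of the theorem. A braid representation does not supply it, since there the images of all generators are comparable in the braid order anyway. What is needed is a linear order on $\mathcal{P}_k$, depending on an ordering of the generators, that extends $<_L$ and places $u$ below $v$ whenever $ax\le_L u$ and $ay\le_L v$ with $x$ before $y$. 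Choosing the ordering of $x,y$ appropriately then separates the clash case from the other three.
\end{enumerate}
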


We now record some additional properties of LDAs that we will use in the sequel.

\begin{dfn}\label{leftpowers}
    For any natural number $n$ and any $p$ in $\mathcal{P}$,  define the $n^{th}$ \emph{right power} of $p$, denoted by $p^{(n)}$, recursively on $n$ by $$p^{(0)} = p \text { and } p^{(n+1)} = p^{(n)} p^{(n)}.$$
\end{dfn}
We make extensive use of the fact that, for $1 \leq i \leq n$, $p p^{(n)} = p^{(n+1)} = p^{(i)}p^{(n)}$, which holds for all $n$ by a simple induction. For future reference, we isolate it here as Lemma \ref{lem:pullbackproperties}.

\begin{lem}\label{lem:pullbackproperties}
    For every $p$ in $\mathcal{P}$ and every natural number $n$ and every $i \leq n$, $p p^{(n)} = p^{(i)}p^{(n)} = p^{(n)} p^{(n)} = p^{(n+1)}$.
\end{lem}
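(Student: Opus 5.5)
The plan is to prove the chain of equalities by induction on $n$, using only the left distributive law in $\mathcal{P}$. Since $p^{(n+1)} = p^{(n)}p^{(n)}$ by Definition \ref{leftpowers}, the equality $p^{(n)}p^{(n)} = p^{(n+1)}$ holds by definition. So it suffices to show $p^{(i)}p^{(n)} = p^{(n+1)}$ for every $i \leq n$. The case $i = 0$ gives $pp^{(n)} = p^{(n+1)}$, and the case $i=n$ is the definition again.

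The core claim is that for every $i \le n$,
$$p^{(i)} p^{(n)} = p^{(n+1)}.$$
I will prove it by induction on $n$. For $n = 0$ the only case is $i=0$, and $pp = p^{(1)}$ by definition. Now assume the claim for $n$, and let $i \le n+1$. If $i = n+1$ there is nothing to prove. If $i \le n$, expand $p^{(n+1)} = p^{(n)}p^{(n)}$ and use left distributivity:
$$p^{(i)}p^{(n+1)} = p^{(i)}\bigl(p^{(n)}p^{(n)}\bigr) = \bigl(p^{(i)}p^{(n)}\bigr)\bigl(p^{(i)}p^{(n)}\bigr).$$
By the induction hypothesis $p^{(i)}p^{(n)} = p^{(n+1)}$, so the right-hand side is $p^{(n+1)}p^{(n+1)} = p^{(n+2)}$, as required.

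The only point to check is that left distributivity is available in $\mathcal{P}$ for arbitrary elements, including ones that involve $\circ$. This holds because $\mathcal{P}$ is defined as the free extension of $\Acal$ by $\circ$ subject to $\Sigma$, while keeping the left distributive law for application. So no step here is a serious obstacle; the argument is a direct one-step induction.
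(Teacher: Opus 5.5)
Your proof is correct and is essentially the argument the paper has in mind; the paper gives no details beyond saying the identity ``holds for all $n$ by a simple induction,'' and your induction on $n$, via $p^{(i)}\bigl(p^{(n)}p^{(n)}\bigr) = \bigl(p^{(i)}p^{(n)}\bigr)\bigl(p^{(i)}p^{(n)}\bigr)$, is exactly that induction. Your worry about left distributivity in $\mathcal{P}$ is also settled directly by $\Sigma$, since $a(bc) = (a\circ b)c = (ab\circ a)c = ab(ac)$.
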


The following proposition appears to be folklore. We record it for completeness, following the proof in \cite[Theorem 3.14]{delaTorre}.

\begin{prop}
    \label{prop:commonPower} 
       For all $u$ and $v$ in $\Acal$, there exist natural numbers $n$ and $m$ such that $u^{(n)} = v^{(m)}$.
\end{prop}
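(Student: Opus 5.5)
The plan is to prove the stronger statement that every $u \in \Acal$ (with generator $x$) has a right power equal to a right power of $x$: there are $n, k$ with $u^{(n)} = x^{(k)}$. The proposition then follows at once. If $u^{(n)} = x^{(k)}$ and $v^{(m)} = x^{(k')}$, say with $k \le k'$, then taking further right powers gives $u^{(n + k' - k)} = x^{(k')} = v^{(m)}$, since equal elements have equal right powers.

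Two ingredients come first.

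\emph{Observation 1.} For all $a, b$ and $n$, we have $(ab)^{(n)} = a\, b^{(n)}$. This holds because left multiplication by $a$ is a homomorphism: $(ab)(ab) = a(bb)$, and an induction on $n$ finishes it.

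\emph{Claim C.} For each $u \in \Acal$ there is $M_u$ such that $u\, x^{(N)} = x^{(N+1)}$ for all $N \ge M_u$. I prove this by induction on terms.
\begin{itemize}
\item For $u = x$, Lemma \ref{lem:pullbackproperties} gives $x\,x^{(N)} = x^{(N+1)}$, so $M_x = 0$ works.
\item For $u = ab$, let $N \ge \max(M_a, M_b)$. By left distributivity, $a(b\,x^{(N)}) = (ab)(a\,x^{(N)})$. The left side equals $a\,x^{(N+1)} = x^{(N+2)}$, using the hypothesis for $b$ and then for $a$ (note $N+1 \ge M_a$). The right side equals $(ab)\,x^{(N+1)}$, using the hypothesis for $a$. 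Hence $(ab)\,x^{(N')} = x^{(N'+1)}$ for every $N' \ge \max(M_a, M_b) + 1$.
\end{itemize}

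\emph{Claim D.} For each $u$ there are $n, k$ with $u^{(n)} = x^{(k)}$. Again I induct on terms.
\begin{itemize}
\item For $u = x$, take $n = k = 0$.
\item For $u = ab$, the hypothesis for $b$ gives $b^{(q)} = x^{(l)}$. Taking further powers yields $b^{(q+t)} = x^{(l+t)}$ for every $t$, so we may assume $l \ge M_a$. Then Observation 1 and Claim C for $a$ give
\[
(ab)^{(q)} = a\, b^{(q)} = a\, x^{(l)} = x^{(l+1)}.
\]
\end{itemize}
Notably, Claim D for $ab$ uses only Claim D for $b$ and Claim C for $a$; the hypothesis for $a$ from Claim D is never needed.

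The main obstacle is finding the right auxiliary statement, Claim C. A naive induction on Claim D alone stalls at $a\,x^{(l)}$, because nothing is known about how an arbitrary $a$ acts on powers of $x$. Once Claim C is formulated, both inductions are routine applications of left distributivity and Lemma \ref{lem:pullbackproperties}. The argument uses only the LD law, so it needs no large cardinal hypothesis and works in $\mathcal{P}$ as well.
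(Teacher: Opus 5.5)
Your argument is correct for $\Acal$, and its skeleton matches the paper's. Both reduce to comparing with powers of $x$, induct on $u = ab$, and use $(ab)^{(q)} = a\,b^{(q)}$.

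The difference is how $a\,x^{(l)}$ gets evaluated. You prove a separate statement, Claim C, by its own left-distributivity induction. The paper instead applies the induction hypothesis to both factors and uses Lemma~\ref{lem:pullbackproperties} on $a$ itself. After padding so that $a^{(n_a)} = x^{(l)}$, it gets $a\,x^{(l)} = a\,a^{(n_a)} = a^{(n_a+1)} = x^{(l+1)}$.

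So your remark that a direct induction on Claim D ``stalls'' at $a\,x^{(l)}$ is mistaken. Claim C for $a$ follows at once from Claim D for $a$: if $a^{(n)} = x^{(k)}$, then $a\,x^{(k+t)} = a\,a^{(n+t)} = a^{(n+t+1)} = x^{(k+t+1)}$ for every $t$. That is why the paper needs only one induction. Your route does yield Claim C as a standalone fact with an explicit threshold ($M_u$ is the height of the term tree), but the proposition does not need it.

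Your closing aside is false: neither the argument nor the proposition extends to $\mathcal{P}$. Claim C fails for composite terms. For example, $(x\circ x)\,x^{(N)} = x(x\,x^{(N)}) = x^{(N+2)}$, not $x^{(N+1)}$.

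The proposition itself also fails in $\mathcal{P}$. Define $\deg(x) = 1$, $\deg(ab) = \deg(b)$, and $\deg(a\circ b) = \deg(a)+\deg(b)$. This respects every identity in $\Sigma$ and satisfies $\deg(pp) = \deg(p)$. Hence every right power of $x\circ x$ has degree $2$, while every $x^{(m)}$ has degree $1$. Restrict that remark to $\Acal$, where the term induction has only the $ab$ case.
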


\begin{proof}
    Let $x$ be the generator of $\Acal = \Acal_x$. It suffices to show that for every $u$ in $\Acal$, there are some $n$ and $m$ such that $u^{(n)} = x^{(m)}$. 

    We induct on the length of $u$. When the length is 1, $u = x$ and we have $u^{(0)} = u = x = x^{(0)}$. Suppose the length of $u$ is $k+1 > 1$. Then $u = v\cdot w$ where the length of $v$ and $w$ are $\le k$. Thus, there are $n_v, n_w, m_v, m_w\in\omega$ such that $v^{(n_v)} = x^{(m_v)}$ and $w^{(n_w)} = x^{(m_w)}$. Since $(a^{(n)})^2 := a^{(n)}a^{(n)} = a^{(n+1)}$ for any $a\in \Acal$, by taking appropriate iterated squares of one of the equations, we may assume that $m_v = m_w$. We then compute 
    $$ u^{(n_w)} = (vw)^{(n_w)} = v\cdot w^{(n_w)} = v\cdot x^{(m_w)} = v\cdot x^{(m_v)} = v\cdot v^{(n_v)} = v^{(n_v+1)} = x^{(m_v+1)}.$$
    Thus, we have $u^{(n)} = x^{(m)}$ for some $n$ and $m \in \omega$, as desired. 
\end{proof}

\subsection{Square Roots}
\begin{dfn}
    For $k,  j \in \Ecal_\lambda$, we say $k$ is a \emph{square root of $j$} if $kk =j$, and $k$ is an \emph{$n$-root of $j$} if $k^{(n)} = j$. 
\end{dfn}

\begin{lem}\label{lem:sqrrtfacts}
    Suppose that $j$ and $k$ are in $\Ecal_\lambda$ and $k$ is a square root of $j$. Then
  \begin{enumerate}
        \item[(i)] The critical point of $k$ is less than that of $j$: $\crit k < \crit j$, and
        \item[(ii)] For any $a \in \vlm$,  $a \in \rng k$ if and only if $k(a) = j(a)$.
    \end{enumerate}
\end{lem}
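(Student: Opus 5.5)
For (i), I would compare the critical points of $k$ and of $j = kk = k^+(k)$ directly. Let $\kappa = \crit k$. By elementarity of $k^+$ (or simply by applying $k$ to the pieces $k \cap V_\alpha$), the critical point of $k(k)$ is $k(\kappa)$. The reason is that for ordinals $\beta < \lambda$ and $\alpha$ large, the statement ``$k\cap V_\alpha$ fixes every ordinal below $\kappa$ and moves $\kappa$'' is carried by $k$ to ``$k(k\cap V_\alpha)$ fixes every ordinal below $k(\kappa)$ and moves $k(\kappa)$''. Taking the union over $\alpha$ gives $\crit(kk) = k(\kappa)$. Since $\kappa$ is the critical point, $k(\kappa) > \kappa$, and therefore $\crit j = k(\kappa) > \kappa = \crit k$.

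For (ii), I would use the standard identity $k(k(a)) = (kk)(k(a))$, that is, $k^+(k^+(a)) = j^+(k^+(a))$, which holds for $a \in \vlm$. For $a \in V_\lambda$ this is just elementarity applied to the formula ``$k(a)=b$'': from $k(a)=b$ we get $k(k)(k(a)) = k(b)$. For $a \in \vlm$ it follows by taking unions over the pieces $a\cap V_\alpha$. This gives the forward direction at once: if $a = k(b)$, then $j(a) = j(k(b)) = k(k(b)) = k(a)$.

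For the converse, suppose $k(a) = j(a)$. I want to show $a \in \rng k^+$. My plan is to pass to pieces. For each $\alpha<\lambda$, choose $\alpha$ in the range of $k$, say $\alpha = k(\beta)$; such $\alpha$ are cofinal because the critical sequence of $k$ is cofinal in $\lambda$, and we may take $\alpha$ to be a point fixed by neither map in a way that interferes. Then consider $a_\alpha = a \cap V_\alpha$, and use the elementarity of $k$ applied to the statement $\exists b\,(k(b)= a_\alpha)$ pulled back one level.

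Concretely, the cleanest route is as follows. The set $\rng(k\restriction V_\beta)$ equals $k(\rng (k\restriction V_{\beta'}))$ in the sense that $j$'s action mirrors $k$'s. The statement ``$x \in \rng(k\restriction V_\gamma)$ iff $k(x) = x$ computed by \ldots'' is not obviously first order, so I would instead argue as follows. Let $X = \{x \in V_\alpha : k(k\restriction V_\gamma)(x)=\ldots\}$. Rather than chase that, I would use the fact that, in $V_\lambda$, $\rng(k\restriction V_\gamma) = \{x \in V_{k(\gamma)} : k(x) \text{ is defined and } \ldots\}$. I expect the main obstacle to be getting a first-order characterization of membership in $\rng k$ inside $V_\lambda$ that $k$ can transport.

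The trick I would try first is this. The model $V_\lambda$ thinks, about the pair $(k_\gamma, x)$ with $k_\gamma = k\cap V_\gamma$, that ``$x\in\rng k_\gamma$ iff $k_\gamma'(x)=k_\gamma(x)$''. Here $k_\gamma'$ should be the analogous piece of $k(k)$, but that is circular. So the alternative I would try is to prove the converse directly via an ordinal/rank argument. Assume $k(a)=j(a)$. Since $j = k(k)$ and $k(a)$ lies in $\rng k$, and since $j(a)$ equals $k(a)$ with $k(a) \in \rng k$, apply elementarity of $k$ to the statement ``$\exists b\ (k(b)=a)$''. This statement holds of the parameters $(k,a)$ in $V_\lambda$ exactly when it holds of $(j, k(a))$ after applying $k$. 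The right-hand side is ``$\exists b\ (j(b)=k(a))$'', which is witnessed by $b=a$. Hence $\exists b\,(k(b)=a)$, so $a\in\rng k$. This works level by level for pieces in $V_\lambda$, and I would then piece the witnesses together for $a \in \vlm$ using uniqueness of preimages under an injective $k$.
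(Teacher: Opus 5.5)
Your proposal is correct and is essentially the paper's proof. For (i) you show $\crit(kk) = k(\crit k) > \crit k$. For (ii) you get the forward direction from $k(k(b)) = (kk)(k(b))$, and the converse by noting that $k(a) = j(a)$ lies in $\rng j = \rng k(k)$, witnessed by $b=a$, and pulling this back by elementarity of $k$.

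Your level-by-level handling, using the pieces $k\cap V_\gamma$ and ordinals $\alpha\in\rng k$ (so that $k(\alpha)=j(\alpha)$), spells out details the paper leaves implicit. You should delete the abandoned attempts in the middle of (ii), which contribute nothing to the final argument.
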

\begin{proof}
From the elementarity of $k$ and the fact that $kk = j$, we have $k(\crit k) = \crit (kk) = \crit j$; $k (\crit k) > \crit k$, so the first claim holds. For the second part, let $\bar{a}$ be such that $k(\bar{a}) = a$. Then $j(a) = kk(a) = kk(k(\bar a)) = k(k(\bar a)) =k(a)$. If $k(a) = j(a)$, then $k(a)\in \rng j = \rng (kk)$. So $a\in \rng k$.
\end{proof}

\begin{thm}[Laver {\cite[Theorem 13]{Laver:1995}}]
If $j \in \mathcal{E}_{\lambda}$, then for any $p$ and $q$ in $\mathcal{A}_j$, if $p \neq q$, then $p \rest \crit \Acal_j \neq q \rest \crit \Acal_j$. 
\end{thm}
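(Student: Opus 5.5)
We argue by contradiction. Suppose $p \neq q$ in $\Acal_j$ but $p \rest \crit \Acal_j = q \rest \crit \Acal_j$, and write $C = \crit \Acal_j$. Since $\Acal_j \cong \Acal$ and $<_L$ linearly orders $\Acal$ (the corollary to Theorem \ref{Laver'slinearitytheorem}), after swapping names we may assume $p <_L q$. So $q = p c_1 \cdots c_n$ for some $c_1, \ldots, c_n \in \Acal_j$. By the Division Form Theorem \ref{thm:DFthm}, we may take this representation to be $p$-normal, computed in $\mathcal{P}_j$ where composition is available.

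The basic tools are three embedding identities, each following from elementarity of the application operation:
\begin{itemize}
\item $\crit(rs) = r(\crit s)$;
\item $(rs) \circ r = r \circ s$ (the braid law);
\item for $\gamma \in C$ and $r \in \Acal_j$, we have $r(\gamma) \in C$, because $r(\crit s) = \crit(rs)$.
\end{itemize}
The third identity shows that $C$ is closed under every member of $\Acal_j$, so agreement of $p$ and $q$ on $C$ is a condition we can iterate.

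\textbf{Step 1 (propagation).} We show that agreement on $C$ propagates upward under left multiplication. If $p \rest C = q \rest C$, then for every $s \in \Acal_j$ we have $\crit(ps) = p(\crit s) = q(\crit s) = \crit(qs)$. More strongly, for $\gamma = \crit t$ we compute
$$(ps)(p(\gamma)) = p(s(\gamma)) = q(s(\gamma)) = (qs)(q(\gamma)) = (qs)(p(\gamma)),$$
so $ps$ and $qs$ agree on $p[C]$. The plan is to show that $p[C]$ is large enough in $C$, that is, cofinal in the relevant intervals. Every element of $C$ is either below $\crit p$, where all relevant maps are the identity, or has the form $\crit(pt)$ for some $t$ (using the division form of $t$ relative to $p$). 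From this we conclude $ps \rest C = qs \rest C$.

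\textbf{Step 2 (reaching a contradiction).} With propagation in hand, induct along the $p$-normal decomposition. Since $p$ and $q = p c_1 \cdots c_n$ agree on $C$, Step 1 gives $pq \rest C = qq \rest C$. Composing these identities with the braid law lets us build a term $d$ with $d = d e_1 \cdots e_m$ as elements of $\Acal_j$, or equivalently $r <_L r$ for some $r$. This contradicts Laver's irreflexivity, which rests on there being no rank-to-rank embeddings with $k = k k_1 \cdots k_m$.

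An alternative route to the same contradiction is to compare the critical sequences directly. Normality forces $\crit c_{i+1} \le$ the appropriate critical point of $p c_1 \cdots c_{i-1}$. Hence at the ordinal $\gamma = \crit(p c_1 \cdots c_{n-1})$, the value $q(\gamma) = p c_1 \cdots c_{n-1}(\crit c_n)$ can be computed and shown to differ from $p(\gamma)$ by induction on $n$. The base case $n = 1$ reduces via the braid law to showing $c_1(\gamma') \ne p(\gamma')$ for $\gamma' = \crit c_1$, which is again a shorter instance.

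\textbf{Main obstacle.} I expect the hardest step to be the claim in Step 1 that $p[C]$ together with the ordinals below $\crit p$ determines the behavior on all of $C$, equivalently that the induction on the normal form is well-founded. I would first attack this using the division form relative to $p$: every element of $\Acal_j$ not $\le_L p$ has the form $|p| a_1 \cdots a_k$. Its critical point is then $p(\crit a_1)$ when $k \ge 1$, or otherwise lies in the critical sequence of a left subterm of $p$. I would then handle the latter case by a secondary induction on the length of $p$.
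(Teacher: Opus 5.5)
There is a genuine gap. It sits exactly at what you call the main obstacle, and the claim you make there is false, not merely unproved. Every $p\in\Acal_j$ fixes all ordinals below $\crit p$ and sends every ordinal $\alpha\ge\crit p$ to something $\ge p(\crit p)>\crit p$. Hence $\rng p\cap[\crit p,p(\crit p))=\emptyset$. In particular $\crit p\in C$ is never of the form $\crit(pt)=p(\crit t)$. Your propagation identity $(ps)(p(\gamma))=(qs)(p(\gamma))$ controls $ps$ and $qs$ only on $p[C]$, so it says nothing about them on $C\cap[\crit p,p(\crit p))$. If $\crit s\ge\crit p$, both maps are the identity there. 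If $\crit s<\crit p$, nothing in your argument makes them agree there; in that case $ps$ is not the identity below $\crit p$ either, e.g.\ $\crit(jj\cdot j)=\crit j<\crit(jj)$. Your proposed repair through $p$-division forms cannot close this, since any ordinal $p(\crit a_1)$ again lies in $\rng p$. Moreover, the formula $\crit(pa_1\cdots a_k)=p(\crit a_1)$ is wrong for $k\ge 2$: $jjj$, parsed with $a_0=a_1=a_2=j$, is $j$-normal, yet $\crit(jjj)=(jj)(\crit j)=\crit j\ne j(\crit j)$. So Step 1 has no proof.

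Step 2 would fail even if Step 1 were granted. Everything you can obtain from $p\rest C=q\rest C$ by your manipulations is again an equality of restrictions to $C$, never an equation in $\Acal_j$. Combining such facts with the braid law can at best yield $d\rest C=(de_1\cdots e_m)\rest C$. Laver's irreflexivity concerns actual equality $k=kk_1\cdots k_m$ and does not exclude this. In fact, since $<_L$ is linear on $\Acal_j$, the assertion that $d\rest C\ne(de_1\cdots e_m)\rest C$ for all $d,e_1,\ldots,e_m$ is equivalent to the theorem itself. So the argument is circular exactly where the content is needed.

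The alternative route breaks at its first computation. For $r=pc_1\cdots c_{n-1}$ and $q=rc_n$, elementarity gives $r(\crit c_n)=\crit(rc_n)=\crit q$, not $q(\crit r)$. So the value $q(\gamma)$ you intend to compare with $p(\gamma)$ is never computed, and the base case is merely reduced to another unproved comparison of the same kind.

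What is missing is any mechanism that turns agreement on the countable set $C$ (in particular at the points of $C$ outside $\rng p$) into equality of embeddings. The paper offers no argument to compare with: it quotes this result from Laver without proof.
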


Noting that for any $n>0$, an $n$-root $k$ of $j$ is not in the algebra generated by $j$, we prove the following lemma.

\begin{lem}\label{lem:nthrootfacts}
    If $n < \omega$ and $j, k \in \Ecal_\lambda$ such that $k^{(n)} = j$. Then $k \rest \Acal_j = j \rest \Acal_j$. 
\end{lem}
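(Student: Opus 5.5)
The plan is to prove that $k \cdot a = j \cdot a$ for every $a \in \Acal_j$ by induction on the length of a term in $j$ representing $a$. This uses only the left distributivity of application on $\Ecal_\lambda$ together with Lemma \ref{lem:pullbackproperties}. If $n = 0$ then $k = j$ and there is nothing to prove, so assume $n \geq 1$.

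\emph{Base case} ($a = j$). Write $j = k^{(n)}$. The identities of Lemma \ref{lem:pullbackproperties} that involve only application are derived from the left distributive law alone, by the simple induction mentioned before that lemma. Since application on $\Ecal_\lambda$ is left distributive, they hold for the embedding $k$. Hence
$$k \cdot j = k \cdot k^{(n)} = k^{(n+1)} = k^{(n)} \cdot k^{(n)} = j \cdot j.$$
So $k$ and $j$ agree on the generator $j$.

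\emph{Inductive step} ($a = b \cdot c$ with $b, c \in \Acal_j$). Assume $k \cdot b = j \cdot b$ and $k \cdot c = j \cdot c$. Application by a fixed embedding is a homomorphism of application; this is exactly left distributivity, which follows from the elementarity of $k$ and $j$ (i.e.\ $k^+(b^+(c)) = (k^+(b))^+(k^+(c))$). Therefore
$$k \cdot (b \cdot c) = (k \cdot b)\cdot(k \cdot c) = (j \cdot b)\cdot(j\cdot c) = j \cdot (b \cdot c).$$
Every element of $\Acal_j$ is the value of some term in $j$ built with the application operation, so the induction covers all of $\Acal_j$. This gives $k \rest \Acal_j = j \rest \Acal_j$, where $k$ is understood as $k^+$ acting on $\Acal_j \subseteq \vlm$.

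The only point needing care is the base case. We must check that Lemma \ref{lem:pullbackproperties}, although stated for $\mathcal{P}$, legitimately transfers to the embedding $k$. It does, because the equalities $k\, k^{(n)} = k^{(n+1)} = k^{(n)}k^{(n)}$ are consequences of the left distributive law alone, which $\Ecal_\lambda$ satisfies. Everything else is routine.
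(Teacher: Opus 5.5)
Your proof is correct, but it is organized differently from the paper's. The paper inducts on $n$. Since $j = k^{(n+1)} = k^{(n)}k^{(n)}$ lies in $\rng k^{(n)}$, so does all of $\Acal_j$. Lemma \ref{lem:sqrrtfacts}(ii), which says a square root agrees with its square on its own range, then gives $k^{(n)} \rest \Acal_j = j \rest \Acal_j$. The induction hypothesis for the pair $k, k^{(n)}$, restricted from $\Acal_{k^{(n)}}$ to $\Acal_j \subseteq \Acal_{k^{(n)}}$, finishes the step. You instead fix $n$ and induct on terms, using only $k\,k^{(n)} = k^{(n+1)}$ at the generator and left distributivity in the inductive step. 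Both arguments rest on the same left-distributive computation: the proof of Lemma \ref{lem:sqrrtfacts}(ii) is just $kk(k\bar a) = k(k\bar a)$. What your route buys is that it is purely equational. It shows that in any left distributive algebra, $k^{(n)} = j$ forces left multiplication by $k$ and by $j$ to agree on the subalgebra generated by $j$. So it transfers verbatim to $\Acal$ or $\Ccal_1$, and it needs neither closure of ranges under application nor the inclusion $\Acal_j \subseteq \Acal_{k^{(n)}}$. What the paper's route buys is that it stays inside the range/pullback framework the paper uses throughout. Its key step also gives more: $k^{(n)}$ and $k^{(n+1)}$ agree on the whole of $\rng k^{(n)}$, a set much larger than $\Acal_j$.
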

\begin{proof}
    This follows immediately from Lemma \ref{lem:sqrrtfacts} by induction on $n$. If $n=0$, then $k=j$, so the conclusion clearly holds. Suppose $n > 0$ and suppose that for any $l\leq n$ and $j,k\in \Ecal_\lambda$ such that $k^{(l)}=j$, we have $k\rest \mathcal{A}_j = j\rest \mathcal{A}_j$. Let $j,k\in \Ecal_\lambda$ be such that $k^{(n+1)}=j$. Note that $k^{(n)}k^{(n)}= k^{(n+1)}$. Apply Lemma \ref{lem:sqrrtfacts} to $k^{(n)}, k^{(n+1)}$, using the fact that $\Acal_j \subseteq \rng k^{(n)}$, we have that $k^{(n)}\rest \Acal_j = k^{(n+1)}\rest \Acal_j = j\rest \Acal_j$. Now we apply the induction hypothesis to $k, k^{(n)}$ to get $k\rest \Acal_j = k^{(n)}\rest \Acal_j$.\footnote{Note that $\Acal_j\subseteq \Acal_{k^{(n)}}$.} Therefore $k\rest \Acal_j = j\rest \Acal_j$ as desired.
\end{proof}

\section{Pulling back in the algebra}
\label{section:pullbackalgebra}

For the main results of this paper, proven in Sections \ref{ElemEquivofLDAs} and \ref{CanonicalExtensions}, we make frequent use of the properties of the range function, including when we can sensically discuss the preimage---called a \emph{pullback}---of one embedding under another. The pullback of one embedding by another is itself an elementary embedding, but that embedding is not \textit{a priori} in $\Acal_{j}$, even when both embeddings are. 
In this section, we provide basic definitions and prove properties of pullbacks.  

We are interested in the conditions under which the pullback of one embedding by another is certain to (exist as an embedding and) be contained in a given algebra, such as $\mathcal{A}_j$. Theorem \ref{thm:pullbackiffdivides} of this section relates pairs of elements to the conditions under which their pullbacks are elements of $\mathcal{A}_j$. We furthermore conjecture that Theorem \ref{thm:pullbackiffdivides} can be extended to finitely generated algebras of embeddings in Conjecture \ref{conj:fingenpullbacks}.

\begin{definition}
 For $j, k\in \mathcal{E}_\lambda$, by the \textit{pullback} of $k$ by $j$, we mean $(j^+)^{-1}(k)$. Recall that $j^+$ denotes the extension of $j$ in $\mathcal{E}_\lambda$ to a function $j^+$ from $V_{\lambda+1}$ to $V_{\lambda+1}$. For brevity, we will write $j^{-1}(k)$ for the pullback of $k$ by $j$.
\end{definition}

\begin{lemma}
    For embeddings $j, k \in \Ecal_\lambda$, if $n < \omega$ and $j$ and $jk$ are both $\Sigma^1_n$-elementary, then $k = j^{-1}(jk) \in \Ecal_\lambda$ (the pullback of $jk$ by $j$) is $\Sigma^1_n$-elementary.
\end{lemma}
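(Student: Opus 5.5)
The plan is to transfer the $\Sigma^1_n$-elementarity of $jk$ back to $k$ through $j^+$. The key fact is that $j^+$ commutes with application: for any $A \in \vlm$ we have $j^+(k^+(A)) = (jk)^+(j^+(A))$. First we note that $k = j^{-1}(jk)$ is immediate from injectivity of $j^+$, so the content of the lemma is the elementarity claim. Writing $\ell = jk$, we proceed in three steps.

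\emph{Step 1: the commutation identity.} For $A \subseteq V_\lambda$ and $\alpha < \lambda$, elementarity of $j$ gives
$$j\bigl(k(A\cap V_\alpha)\bigr) = j(k)\bigl(j(A\cap V_\alpha)\bigr).$$
Here $j(k)$ means $j(k \cap V_\beta)$ applied to elements of $V_{j(\beta)}$, for a suitable $\beta$. Taking unions over $\alpha$ then yields
$$j^+(k^+(A)) = \ell^+(j^+(A)).$$
The identity also holds for finite tuples $\vec A$.

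\emph{Step 2: transfer of $\Sigma^1_n$ formulas.} Fix a $\Sigma^1_n$ formula $\varphi$ over $V_\lambda$ and parameters $\vec A \in \vlm$. We must show
$$V_\lambda \models \varphi[\vec A] \iff V_\lambda \models \varphi[k^+(\vec A)].$$
The following chain of equivalences does this:
\begin{align*}
V_\lambda \models \varphi[\vec A]
&\iff V_\lambda \models \varphi[j^+(\vec A)] && (\text{$j$ is $\Sigma^1_n$})\\
&\iff V_\lambda \models \varphi[\ell^+(j^+(\vec A))] && (\text{$\ell$ is $\Sigma^1_n$})\\
&\iff V_\lambda \models \varphi[j^+(k^+(\vec A))] && (\text{Step 1})\\
&\iff V_\lambda \models \varphi[k^+(\vec A)] && (\text{$j$ is $\Sigma^1_n$ again}).
\end{align*}
This shows that $k$ is $\Sigma^1_n$-elementary. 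Since $\neg\varphi$ is $\Pi^1_n$, the equivalence also covers $\Pi^1_n$ formulas; we should check that the definition of $\Sigma^1_n$-elementarity uses equivalence, which it does.

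\emph{Step 3: $k$ lies in $\Ecal_\lambda$.} It remains to verify that $k$ is a nontrivial elementary embedding $V_\lambda \to V_\lambda$. Elementarity at first order is the $n=0$ case of Step 2, or it follows directly as a special case of the same chain. Nontriviality holds because if $k$ were the identity, then $\ell = j(\mathrm{id}) = \mathrm{id}$, contradicting $\ell \in \Ecal_\lambda$.

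The main obstacle is Step 1. Because $k$ is a proper class of $V_\lambda$ rather than an element, $j(k)$ is only defined via the union, so some care is needed: one must check that $j(k\cap V_\beta)$ really agrees with $\ell$ on $V_{j(\beta)}$, and that $k(A\cap V_\alpha)$ is captured by $k\cap V_\beta$ for large enough $\beta$. After that, the rest is formal.
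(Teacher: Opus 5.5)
Your proof is correct. The commutation identity $j^+(k^+(A)) = (jk)^+(j^+(A))$, followed by transferring through $j$, then $jk$, then $j$ again, is the standard argument; the paper gives no details and only cites Laver's Theorem 2.4, and you correctly isolate the one point needing care, namely coherence of $j(k\cap V_\beta)$ with $jk$ and cofinality of $k(\alpha)$ and $j(\alpha)$ in $\lambda$. Your Step 3 is superfluous, since $k\in\Ecal_\lambda$ is a hypothesis of the lemma and $k=j^{-1}(jk)$ is just injectivity of $j^+$.
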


\begin{proof}
    The proof is very similar to the proof of Theorem 2.4 of \cite{Laver:1997}.
\end{proof}

\begin{lemma} \label{lem:kjiskkj}
    For embeddings $j$ and $k$ in $\mathcal{E}_\lambda$,
    $$j \in \rng k^+ \Leftrightarrow kj = kkj.$$
\end{lemma}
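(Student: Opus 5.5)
The plan is to prove the two directions separately, using only elementarity of $k$ (via its extension $k^+$) and Lemma \ref{lem:sqrrtfacts}(ii) applied to the square root relation between $k$ and $kk$.

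\emph{Forward direction.} Suppose $j \in \rng k^+$, say $j = k^+(\bar{\jmath}) = k\bar{\jmath}$ for some $\bar{\jmath} \subseteq V_\lambda$. Since $k$ is a square root of $kk$, Lemma \ref{lem:sqrrtfacts}(ii) gives $k(a) = (kk)(a)$ for every $a \in \rng k^+$, which is exactly $kj = kkj$. Alternatively, the computation can be done directly by left distributivity applied to the extension:
$$kkj = (kk)(k\bar{\jmath}) = k(k\bar{\jmath}) = kj.$$
Here the middle equality is the instance $k\cdot(k\cdot \bar{\jmath}) = (k\cdot k)\cdot(k\cdot\bar{\jmath})$ of the left distributive law. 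That law holds for arbitrary $\bar{\jmath} \in \vlm$, because $k^+$ preserves the application operation computed level-by-level on $V_\alpha$, $\alpha<\lambda$.

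\emph{Reverse direction.} Suppose $kj = kkj$. Then $kj \in \rng (kk)^+$. Now apply $k^+$ to the statement ``$j \in \rng k^+$'', or rather run the argument of Lemma \ref{lem:sqrrtfacts}(ii) in reverse: that lemma's proof shows that $k(a) = (kk)(a)$ implies $a \in \rng k^+$. The key point is that $(kk)^+ = k^+(k)^+$ in the sense that for any $B \subseteq V_\lambda$ the assertion ``$B \in \rng k^+$'' is sent by $k$ to ``$k^+(B) \in \rng (kk)^+$''.

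The main obstacle is making this precise, since ``$B\in\rng k^+$'' is a statement about subsets of $V_\lambda$ while $k$ is only elementary on $V_\lambda$. I would handle it by cutting to levels. Let $\alpha<\lambda$ range over sufficiently large ordinals, and consider the statement ``$j\cap V_{k(\alpha)}$ is $k(\,j'\cap V_\alpha)$ for some $j'$''. This is a statement about objects in $V_\lambda$, to which elementarity of $k$ applies.

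From $kj = kkj$ we get, for each $\alpha$, that $k(j\cap V_\alpha) = (kk)(j\cap V_\alpha)$. Since $(kk)(j\cap V_\alpha) \in \rng(kk) = k``(\rng k)$ in the appropriate rank-bounded sense, elementarity pulls back to $j\cap V_\alpha \in \rng k$. The preimages $\bar{\jmath}_\alpha$ then cohere, because $k$ is injective and commutes with intersection with $V_\beta$. Their union $\bar{\jmath}$ therefore satisfies $k^+(\bar{\jmath}) = j$. I expect checking this coherence and the use of $\rng (kk)\cap V_{k(\beta)} = k(\rng k \cap V_\beta)$ to be the only real work; everything else is the one-line computation above.
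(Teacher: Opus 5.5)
\emph{Verdict: genuine gap, but an easily repaired one.} Your forward direction is the paper's argument (left distributivity applied to $j=k\bar\jmath$), and your instinct to do the reverse direction level by level is sound. The step that fails is ``from $kj=kkj$ we get, for each $\alpha$, $k(j\cap V_\alpha)=(kk)(j\cap V_\alpha)$.'' It is false, even for arbitrarily large $\alpha$.

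Write $J=kj=kkj$. Then $k(j\cap V_\alpha)=J\cap V_{k(\alpha)}$ and $(kk)(j\cap V_\alpha)=J\cap V_{kk(\alpha)}$. These are the same set truncated at two possibly different heights. Moreover, $k(\alpha)=kk(\alpha)$ holds only when $\alpha\in\rng k$ (Lemma \ref{lem:sqrrtfacts}(ii) for elements of $V_\lambda$).

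For a concrete failure, let $\kappa_0=\crit k$ and $\kappa_{n+1}=k(\kappa_n)$. Take $j=kk$, so the hypothesis holds because $k(kk)=(kk)(kk)$, and take $\alpha=\kappa_0$.
\begin{itemize}
\item $kk(\kappa_0)=\kappa_0<\kappa_1=k(\kappa_0)$.
\item $J=k(kk)$ has critical point $\kappa_2$, so $(\kappa_0,\kappa_0)\in J$, and this pair has rank $\kappa_0+2<\kappa_1$.
\item Hence $J\cap V_{\kappa_1}\neq J\cap V_{\kappa_0}$.
\end{itemize}
Restricting to ``sufficiently large'' $\alpha$ does not help. Ordinals outside $\rng k$, such as $\kappa_n+\kappa_0$ for $n\ge 1$, are cofinal in $\lambda$.

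The repair is to work only at heights in $\rng k\cap\lambda$, which is cofinal. For $\bar\alpha<\lambda$ and $\alpha=k(\bar\alpha)$ you do have $k(\alpha)=kk(\alpha)$, hence $k(j\cap V_\alpha)=(kk)(j\cap V_\alpha)$. So $j\cap V_{k(\bar\alpha)}=k(y_{\bar\alpha})$ for some $y_{\bar\alpha}\subseteq V_{\bar\alpha}$. The $y_{\bar\alpha}$ cohere by injectivity of $k$, and $\bar\jmath=\bigcup_{\bar\alpha}y_{\bar\alpha}$ satisfies $k^+(\bar\jmath)=\bigcup_{\bar\alpha}(j\cap V_{k(\bar\alpha)})=j$.

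Once corrected, your argument differs from the paper's only in bookkeeping. The paper notes that $kk=k^+(k)$ and $kkj=k^+(j)$ lie in $\rng k^+$, and that $j=(kk)^{-1}(kkj)$ is $\Sigma_0$-definable from them, then concludes ``by elementarity.'' Since $k^+$ is in general only $\Sigma_0$-elementary, the downward existence step hidden there is exactly what your corrected gluing supplies. Note also that the statement is literally Lemma \ref{lem:sqrrtfacts}(ii) with $kk$ in place of $j$ and $a=j$, so you could cite it for both directions. Its proof leaves the same downward step implicit.
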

\begin{remark}
For brevity, we will not write the ``$+$" in the following proofs. 
\end{remark}
\begin{proof}
    By the hypothesis that $j$ is in the range of $k$, there is some $\bar j \in \mathcal{E}_{\lambda}$. Observe that $j = k\bar j$, then $kj = k(k \bar j) = kk (k \bar j) = kkj$, where the second equality follows from left distributivity.
    
    For the other direction, suppose $kj=kkj$. Then $kkj$ is the image of $j$ under $k$ and thus in the range of $k$. Since both $kkj$ and $kk$ are in the range of $k$, the pullback of $kkj$ by $kk$, $(kk)^{-1}(kkj) = j$ (being $\Sigma_0$-definable from these embeddings), is in the range of $k$ by elementarity, as desired. 
\end{proof}

\begin{dfn}
    For elements $a$ and $b$ of $\mathcal{A}$ (respectively, $\mathcal{P}$), we say that \emph{$a$ divides $b$}, denoted by $a | b$, if and only if there exists $c$ in $\mathcal{A}$ (respectively, $\mathcal{P}$) such that $b = ac$.
\end{dfn}

\begin{lem}\label{lem:divisoractionequivalence}
    For $a,b \in \mathcal{P}$, $a$ divides $b$ (in $\mathcal{P}$) if and only if $ab = aab$. Similarly if $a,b \in \mathcal{A}$, $a$ divides $b$ (in $\mathcal{A}$) if and only if $ab = aab$.
\end{lem}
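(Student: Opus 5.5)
The forward direction is a purely algebraic identity. Suppose $b = ac$ for some $c$ in $\mathcal{P}$ (respectively $\mathcal{A}$). Left distributivity, which holds in $\mathcal{P}$ because it satisfies the left distributive law together with $\Sigma$, gives
$$ab = a(ac) = (aa)(ac) = aa(ac) = aab.$$
This is the algebraic counterpart of the first half of Lemma \ref{lem:kjiskkj}.

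For the converse, in $\mathcal{A}$, I would use Laver's representation rather than a syntactic argument. Fix $j \in \Ecal_\lambda$. By the Corollary to Theorem \ref{Laver'slinearitytheorem}, $\Acal_j \cong \Acal$ via $x \mapsto j$. Let $a, b$ correspond to embeddings $k, l \in \Acal_j$, and suppose $ab = aab$. Then $kl = kkl$. By Lemma \ref{lem:kjiskkj}, $l \in \rng k^+$, so $l = k\bar l$ for some $\bar l \in \Ecal_\lambda$, namely the pullback $k^{-1}(l)$.

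The obstacle is that $\bar l$ need not a priori lie in $\Acal_j$. A different representation therefore does not help directly, and I would argue algebraically within $\Acal$. By left cancellativity and linearity of $<_L$ (Laver), it suffices to find $c \in \Acal$ with $ac = b$.

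My plan is to use the division form (Theorem \ref{thm:DFthm}) and $<_L$-comparisons. First I would show $a <_L b$. Since $ab = aab$ and $ab = a\cdot b$, while $aab = (aa)b$, left cancellation and comparison of $b$ with $ab$ should give this. Indeed, if $b \le_L a$, then $ab \le_L aa$, and so $aab$ would exceed $ab$ in $<_L$, using $aa \le_L aab$. I would check this carefully using irreflexivity.

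Next I would write $b = ac_1\cdots c_n$ in $a$-normal division form (in $\mathcal{P}$) and use $ab = aab$ to show $n = 1$. Concretely, I would compute
$$ab = aa\,(ac_2)\cdots(ac_n)$$
by left distributivity, and compare this with $aab$ via uniqueness of division forms. Any extra terms $c_2, \dots$ would violate the normality bounds $c_i \le_L ac_1\cdots c_{i-2}$.

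This uniqueness step is where I expect the real work, especially handling the $\circ$ case. Since the paper presumably proves this via pullbacks, an alternative I would try first is the following. Show that $\bar l \in \Acal_j$ using the critical-point restriction theorem (Laver, Theorem 13), by matching $\bar l \rest \crit \Acal_j$ with some element of $\Acal_j$.

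For $\mathcal{P}$, I would run the same argument, replacing $\Acal_j$ by the corresponding algebra of embeddings with composition. Alternatively, I would reduce to the $\mathcal{A}$ case via the conservativity of Theorem \ref{Laver'sTheorem}: since $\mathcal{P}$-elements act on $\mathcal{A}$, an $a \in \mathcal{P}$ can be written as $a_1 \circ \dots$, and I would use $(a \circ b)c = a(bc)$.
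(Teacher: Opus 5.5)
Your forward direction and your argument that $a <_L b$ (if $b \le_L a$ then $ab \le_L aa <_L aab$) are fine. Taking the $|a|$-division form of $b$ and invoking uniqueness is also the paper's route. However, the step that actually forces $n=1$ is missing, and as you describe it, it would not work.

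First, your expansion drops the term $(ac_1)$. From $b = ac_1c_2\cdots c_{n-1}\ast c_n$ one gets
\[ ab = a(ac_1)(ac_2)\cdots(ac_{n-1})\ast(ac_n) = aa(ac_1)(ac_2)\cdots(ac_{n-1})\ast(ac_n). \]
Applying $aa$ to $b=(ac_1)c_2\cdots c_{n-1}\ast c_n$ gives
\[ aab = aa(ac_1)(aac_2)\cdots(aac_{n-1})\ast(aac_n). \]
The key observation is that both words have the same head $aa(ac_1)$. Both are $aa(ac_1)$-normal: multiply the normality inequalities $c_i \le_L ac_1\cdots c_{i-2}$ on the left by $a$ and by $aa$, using left cancellativity, which also keeps the final inequality strict when $\ast=\circ$. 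Uniqueness of the division form then gives $ac_i = aac_i$ for $i \ge 2$.

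The contradiction does not come from extra terms ``violating the normality bounds''. It comes from the first bound $c_2 \le_L a$: your own step-one argument then gives $ac_2 \le_L aa <_L aac_2$, contradicting $ac_2 = aac_2$. Hence $n=1$.

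The fallback you say you would try first does not work. Laver's Theorem 13 only separates elements already known to lie in $\Acal_j$, so it cannot certify that $\bar l = k^{-1}(l)$ lies in $\Acal_j$. For example, a square root $k$ of $j$ agrees with $j$ on everything in $\rng k \supseteq \Acal_j$ (Lemma \ref{lem:sqrrtfacts}), in particular on $\crit\Acal_j$, yet $k \notin \Acal_j$. Moreover, the paper does not prove this lemma via pullbacks; the dependence runs the other way. This lemma is the input to Theorem \ref{thm:pullbackiffdivides}, which is where membership of the pullback in $\Acal_j$ is derived, so a pullback-based proof would be circular.

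Your ordering of the $\mathcal{A}$ and $\mathcal{P}$ cases is also reversed. The division-form argument lives in $\mathcal{P}$ and works verbatim for $a,b\in\mathcal{P}$, so the paper proves that case directly. Your proposed reduction of $\mathcal{P}$ to $\mathcal{A}$ via conservativity does not apply, because $b$ need not be in $\mathcal{A}$ and conservativity concerns equations between $\mathcal{A}$-terms.

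Conversely, for $a,b\in\mathcal{A}$ the argument only yields $b = ac_1$ with $c_1\in\mathcal{P}$. To conclude that $a$ divides $b$ in $\mathcal{A}$, you still need the fact, cited by the paper from Laver--Miller, that $c_1$ cannot contain an essential composition when $b\in\mathcal{A}$.
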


\begin{proof}
    The forward direction follows directly from Lemma \ref{lem:kjiskkj}: If $a$ divides $b$, then $b = ac$ for some $c$ in $\mathcal{P}$. Then $ab = a(ac) = aa(ac) = aab$. % say $b = a\bar b$ for some $\bar b \in \mathcal{A}_1$. Then $ab = a(a \bar b) = (aa) (a \bar b) = aa b$.
    
    For the backward direction, suppose that $ab = aab$. We first prove that this entails $a <_L b$. Equality of $a$ and $b$ is impossible by irreflexivity of elementary embeddings (else $aa = aaa$), so suppose for sake of contradiction that $b <_L a$ holds. Then, there exist $a_1, \ldots, a_n$ in $\mathcal{P}$ such that $a = b a_1 a_2 \cdots a_{n-1} \ast a_n,$ where $ \ast $ is either $\cdot$ or $\circ$. 
    But then $$ab = aab = a(ba_1\cdots a_{n-1}*a_n)b = ((ab)(aa_1)\cdots (aa_{n-1})*(aa_n)) b.$$
    The right hand side is $(ab)(aa_1)\cdots (aa_{n-1})(aa_n) b$ when $* = \cdot$ and $$((ab)(aa_1)\cdots (aa_{n-1})\circ(aa_n)) b = (ab)(aa_1)\cdots (aa_{n-1})(aa_n b)$$ when $* = \circ$. 
    Both possibilities contradict the irreflexivity of $<_L$. 

    Assume therefore that $b >_L a$. Let $b = a b_1 b_2 \cdots b_{n-1} * b_n$ be the $|a|$-division form of $b$. We compute:
    $$ab = a(a b_1 b_2 \cdots b_{n-1} * b_n) = aa(ab_1)(ab_2)\cdots (ab_{n-1})*(ab_n),$$
    $$aab = aa(a b_1 b_2 \cdots b_{n-1} * b_n) = aa(ab_1)(aab_2)\cdots (aab_{n-1})*(aab_n).$$
    We claim that both of these are $aa(ab_1)$-normal, and thus in $|aa(ab_1)|$-division form once the individual components are in $aa(ab_1)$-division form. We need to show that $ab_{i} \le_L aa(ab_1)\cdots (ab_{i-2})$ for $2 < i \leq n$, with strict inequality if $\ast = \circ$ and $i = n$. Because $\mathcal{A}$ is left cancelative, multiplying the inequality $b_{i} \le_L ab_1\cdots b_{i-2}$ on the left by $a$ preserves the inequality, including in the case where $i = n$ and $\ast  = \circ$, so the inequality is strict.
    Similarly, applying $aa$ to these inequalities shows us that the second word is also in $|aa(ab_1)|$-normal. The two words therefore have the same $|aa(ab_1)|$-division forms and thus must be identical. In particular, we have $ab_2 = aab_2$ if $n \ge 2$. 
    
    If $n \ge 2$, then the assumption that $ab_1 \cdots b_{n-1}*b_n$ is the $|a|$-division form of $b$ gives that $b_2 \le_L a$. The same argument showing that $a <_L b$ shows that  $ab_2 <_L aab_2$. Therefore, we have both that $ab_2 = aab_2$ and that $ab_2 <_L aab_2$, contradicting the linearity of $<_L$ (see Theorem \ref{Laver'slinearitytheorem}). 

    The only remaining case is $n = 1$ and $b = ab_1$; this proves that $a | b$, as desired. 

    To see that the corresponding statement holds in $\Acal$, note that if $a$ and $b$ are in $\Acal$ and $a$ divides $b$ in $\mathcal{P}$, then $a$ divides $b$ in $\Acal$. In particular, $a|b$ if and only if there exists a $c$ in $\mathcal{P}$ such that $b = ac$, but because $b$ is in $\mathcal{A}$, $c$ must also be in $\mathcal{A}$ (otherwise, it would contain an essential composition and $b$ would be in $\mathcal{P} \setminus \mathcal{A}$---see \cite[page 2158]{LaverMiller:2013}).
\end{proof}

We are now ready to prove the following theorem. 

\begin{thm} \label{thm:pullbackiffdivides}
    Let $j$ be an elementary embedding in $\Ecal_{\lambda}$. For elements $\ell$ and $p$ in $\Acal_j$, the following are equivalent.
    \begin{enumerate}
        \item The pullback of $p$ by $\ell$, $\ell^{-1}p$, exists as an embedding in $\Ecal_{\lambda}$.
        \item The pullback of $p$ by $\ell$, $\ell^{-1}p$, is an embedding in $\Acal_j$.
        \item The word $\ell$ divides $p$ as an element of the algebra $\Acal_j$.
     \end{enumerate}
\end{thm}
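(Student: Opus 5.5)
I would prove the cycle of implications $(3)\Rightarrow(2)\Rightarrow(1)\Rightarrow(3)$.

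For $(3)\Rightarrow(2)$: if $\ell$ divides $p$ in $\Acal_j$, then $p=\ell q$ for some $q\in\Acal_j$. Since $p=\ell^+(q)$, the set $q$ is a preimage of $p$ under $\ell^+$. The extension $\ell^+$ is injective on $\vlm$, because $\ell^+(A)\cap V_{\ell(\alpha)}=\ell(A\cap V_\alpha)$ and $\ell$ is injective. Hence $\ell^{-1}p=q\in\Acal_j$. The implication $(2)\Rightarrow(1)$ is trivial, since $\Acal_j\subseteq\Ecal_\lambda$.

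The substantive direction is $(1)\Rightarrow(3)$. Suppose $\ell^{-1}p$ exists as an embedding, say $p=\ell k$ with $k\in\Ecal_\lambda$, not assumed to lie in $\Acal_j$. Then $p\in\rng \ell^+$. By Lemma \ref{lem:kjiskkj} (the forward direction, which only uses left distributivity of application on $\Ecal_\lambda$), this gives $\ell p=\ell\ell p$. This is now an equation between elements of $\Acal_j$.

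Next I transfer it to the free algebra. By Laver's theorem (the corollary to Theorem \ref{Laver'slinearitytheorem}), $\Acal_j\cong\Acal$ via $j\mapsto x$. Writing $a,b\in\Acal$ for the words corresponding to $\ell,p$, we get $ab=aab$ in $\Acal$. By Lemma \ref{lem:divisoractionequivalence} (the $\Acal$ version), $a$ divides $b$ in $\Acal$, that is, $b=ac$ for some $c\in\Acal$. Pushing back through the isomorphism gives $p=\ell q$, where $q\in\Acal_j$ corresponds to $c$. So $\ell$ divides $p$ in $\Acal_j$, which is (3). As a byproduct, injectivity of $\ell^+$ forces $k=q$, so the pullback automatically lies in $\Acal_j$.

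I expect the main obstacle to be the step moving from an arbitrary embedding $k\in\Ecal_\lambda$ to an algebraic identity inside $\Acal_j$. This works because Lemma \ref{lem:kjiskkj} converts ``$p\in\rng\ell^+$'' into the intrinsic equation $\ell p=\ell\ell p$, which mentions only $\ell$ and $p$. The heavy lifting then happens in Lemma \ref{lem:divisoractionequivalence}, which relies on the division form and on the linearity and irreflexivity of $<_L$, both of which require the rank-to-rank hypothesis. I would also check two details: that the isomorphism $\Acal_j\cong\Acal$ preserves and reflects the equation, which holds since it is an isomorphism of LDAs; and that divisibility in $\mathcal{P}$ descends to $\Acal$, which is handled at the end of that lemma.
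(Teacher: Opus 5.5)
Your proposal is correct and matches the paper's proof. It uses the same cycle of implications, and obtains $(1)\Rightarrow(3)$ by deriving $\ell p=\ell\ell p$ from $p=\ell q$ via left distributivity (the paper inlines this computation rather than citing Lemma \ref{lem:kjiskkj}) and then invoking Lemma \ref{lem:divisoractionequivalence}. Your remarks on the injectivity of $\ell^+$ and on transferring the equation through $\Acal_j\cong\Acal$ simply make explicit two steps the paper leaves implicit.
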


\begin{proof}
    (2) implies (1) since $\Acal_j \subset \Ecal_\lambda$.

    (1) implies (3): Let $q = \ell^{-1}p \in \Ecal_\lambda$, so that $\ell q = p$. Thus, $$ \ell p = \ell(\ell q) = (\ell \ell)(\ell q) = \ell\ell p. $$
    By Lemma \ref{lem:divisoractionequivalence}, we have $\ell$ divides $p$ in $\Acal_j$.

    (3) implies (2) is the definition of divisibility.
\end{proof}

(1) implies (2) of Theorem \ref{thm:pullbackiffdivides} is the $n = 1$ case of the following conjecture.

\begin{conjecture}\label{conj:fingenpullbacks}
    Suppose $k_1, \ldots, k_n$ are elementary embeddings in $\mathcal{E}_{\lambda}$ such that $\mathcal{A}_{k_1, \ldots, k_n}$ is free and that $\ell, p \in \mathcal{A}_{k_1, \ldots, k_n}$. If $\ell^{-1} p$ exists, then $\ell^{-1} p \in \mathcal{A}_{k_1, \ldots, k_n}$. 
\end{conjecture}

\section{Elementary Equivalence and Finitely Generated Free LDAs}\label{ElemEquivofLDAs}

One of the first insights gained from manipulating terms in the one- and many-generated free LDAs is that they are qualitatively different due to the existence of variable clashes in the multivariable cases. In this section, we analyze that experiential difference using elementary equivalence. The main result of this section, Theorem \ref{thm:elem_equiv}, says that all finitely-generated free LDAs are $\Sigma_1$-elementarily equivalent but not $\Sigma_2$-elementarily equivalent. Our proof of $\Sigma_1$-elementary equivalence uses large cardinals in an essential way; whether a ZFC proof exists is an open question. The non-$\Sigma_2$-elementary equivalence, however, is provable in ZF. This non-equivalence at the level of $\Sigma_2$ is in contrast with two famous examples from group theory. The first is Tarski's famous conjecture \cite{TarskiSinaceur}, that all free groups (with more than one generator) are elementarily equivalent. Free abelian groups are $\Sigma_2$-elementarily equivalent but not $\Sigma_3$-elementarily equivalent.

In general, square roots of elementary embeddings have weaker elementarity properties than the embeddings from which they are derived. In order to prove Theorem \ref{thm:elem_equiv}, we must first establish a series of lemmas guaranteeing that we can generate the sequences of square roots of embeddings we will need.

The version of Lemma \ref{sqrrtlem} presented here is a slight extension of the construction implicit in \cite[Lemma~2.2]{Laver:1997}, as proven in \cite{BrookeTaylorCramerMiller2024}.

\begin{lem}\label{sqrrtlem} Suppose $j \in \Ecal_\lambda$. Then the following are equivalent.
\begin{enumerate}
    \item The embedding $j$ is $\Sigma_{1}$-elementary.
    \item For any 
$c_0,\ldots, c_s \in \vlm$ there is $k \in \Ecal_\lambda$ that is a square root of $j$ 
such that $c_0, \ldots, c_s\in\rng k$.
    \item For any 
$a_0,\ldots,a_m,$ $b_0, \ldots, b_n \in \vlm$ there is $k \in \Ecal_\lambda$ 
such that $a_0,\ldots,a_m\in\rng k$ and for all $\ell\leq n$, $k(b_\ell)=j(b_\ell)$.
\end{enumerate}
\end{lem}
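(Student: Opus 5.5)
We prove the cycle (1)$\Rightarrow$(2)$\Rightarrow$(3)$\Rightarrow$(1). Throughout, ``$j$ is $\Sigma_1$-elementary'' means that its extension $j^+:\vlm\to\vlm$ is $\Sigma_1$-elementary, equivalently that $j$ is $\Sigma^1_1$-elementary on $V_\lambda$. We use the standard coding: an element $k\in\Ecal_\lambda$, a finite tuple of elements of $\vlm$, or a finite tuple together with a function on $V_\lambda$ can each be coded as a single subset of $V_\lambda$.

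\textbf{(1)$\Rightarrow$(2).} This is Laver's reflection argument. Fix $c_0,\ldots,c_s\in\vlm$ and consider the statement
\[\exists k\,\bigl[k\in\Ecal_\lambda,\ kk=J,\ \text{and } C_0,\ldots,C_s\in\rng k^+\bigr]\]
with parameters $J$ and $C_0,\ldots,C_s$. The matrix is $\Sigma_0$ over $\vlm$, or at worst it is first order over $V_\lambda$ with the subset parameters. Indeed, elementarity of $k:V_\lambda\to V_\lambda$ is a first-order statement about $(V_\lambda,\in,k)$. The equation $kk=J$ is computed levelwise as $\bigcup_\alpha k(k\cap V_\alpha)$. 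Finally, $C_i\in\rng k^+$ says there is $\bar C_i$ with $k^+(\bar C_i)=C_i$, and this extra existential quantifier is absorbed into the outer one. So the whole statement is $\Sigma_1$ over $\vlm$.

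Now take $J=j$ and $C_i=j^+(c_i)$. The statement is true in $\vlm$, witnessed by $k=j$: we have $jj=j^+(j)$, and $j^+(c_i)\in\rng j^+$. By $\Sigma_1$-elementarity of $j^+$, the statement with the pulled-back parameters $j^{-1}(j)$ and $c_i$ is also true. The main point to check is that the first parameter pulls back to $j$ itself: since $j^+(j)=jj$, the preimage of $jj$ is $j$. More carefully, we apply elementarity to the formula $\exists k\,(kk=j \wedge c_i\in\rng k)$ evaluated at $(j,\vec c)$, which $j^+$ sends to $(jj,j(\vec c))$, and the image statement is witnessed by $j$. Hence some $k$ satisfies $kk=j$ with every $c_i\in\rng k$.

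\textbf{(2)$\Rightarrow$(3).} Given $\vec a$ and $\vec b$, apply (2) to the finite list $\vec a,\vec b$ to obtain a square root $k$ of $j$ with all of these in $\rng k$. Then $\vec a\in\rng k$ directly, and Lemma~\ref{lem:sqrrtfacts}(ii) gives $k(b_\ell)=j(b_\ell)$ because $b_\ell\in\rng k$.

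\textbf{(3)$\Rightarrow$(1).} This is the step I expect to be the real obstacle. Let $\varphi(x)=\exists y\,\psi(x,y)$ be $\Sigma_0$, and suppose $\vlm\models\varphi(j(b))$. We must show $\vlm\models\varphi(b)$; the converse direction is automatic since $j^+$ is $\Sigma_0$-elementary, hence upward absolute for $\Sigma_1$. Pick a witness $a$ with $\psi(j(b),a)$ and apply (3) with $a_0=a$ and $b_0=b$, getting $k$ with $a=k(\bar a)$ and $k(b)=j(b)$. Then $\psi(k(b),k(\bar a))$ holds, and $\Sigma_0$-elementarity of $k^+$ (automatic for any $k\in\Ecal_\lambda$) yields $\psi(b,\bar a)$. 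Hence $\varphi(b)$ holds. If the paper's definition of $\Sigma_1$-elementary refers to $\Sigma^1_1$ formulas over $V_\lambda$, the same argument works after recasting them as $\Sigma_1$ over $\vlm$ with $\Sigma_0$ matrix, which is the footnoted equivalence.
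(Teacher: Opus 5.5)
Your proof is correct and matches the paper. Your (2)$\Rightarrow$(3) via Lemma~\ref{lem:sqrrtfacts}(ii) and your (3)$\Rightarrow$(1), which pulls the witnesses back along $k$, are exactly its steps. For (1)$\Rightarrow$(2), which the paper only cites from \cite{BrookeTaylorCramerMiller2024}, you give the standard Laver reflection argument. Its correct form is your ``more carefully'' version: evaluate at $(j,\vec c)$, whose image $(jj,j(\vec c))$ is witnessed by $k=j$. Delete the earlier instantiation $J=j$ and the parameter $j^{-1}(j)$, which does not exist since $j\notin\rng j^+$. One small repair is also needed: elementarity of $k$ is a scheme, not a single first-order statement. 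To see that the formula is $\Sigma_1$ over $\vlm$, phrase it as ``there is $S$ which is the satisfaction relation of $(V_\lambda,\in)$ and $k$ preserves $S$.'' This is $\Delta_0$ in $S$ and $V_\lambda$, and can be absorbed into the leading existential quantifier.
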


\begin{proof}
    For (1) implies (2), see \cite[Lemma 16]{BrookeTaylorCramerMiller2024}. (2) implying (3) follows Lemma \ref{lem:sqrrtfacts}.

    For (3) implies (1), suppose that $$\vlm \models \exists y_0,\ldots, y_m \phi[c_0,\ldots,c_n,y_0,\ldots, y_m],$$ where $\phi$ is $\Sigma_0$ and $c_0, \ldots, c_n$ in the range of $j$ with preimages $b_0, \ldots, b_n$, and suppose $a_0,\ldots, a_m$ witness this statement. Then there exists a $k$ such that $a_0,\ldots, a_m \in \rng k$ and $k(b_i) = j(b_i)$ for $i < n$. Let $\bar a_0, \ldots, \bar a_m$ be the preimages of $a_0, \ldots, a_m$ under $k$. We have that 
    \begin{align*}
        \vlm \models \phi[c_0, \ldots, c_n, a_0, \ldots, a_m] & \Rightarrow  \vlm \models \phi[b_0, \ldots, b_n, \bar a_0, \ldots, \bar a_m] \\
        & \Rightarrow \vlm \models \exists y_0,\ldots, y_m \phi[b_0,\ldots,b_n,y_0,\ldots, y_m]
    \end{align*}
    So we have the desired result. 
\end{proof}

A similar argument to the previous gives the following:
\begin{lemma}\label{lem:reflection}
    For $n \ge 1$, $j: V_{\lambda+1} \to V_{\lambda+1}$ is a $\Sigma_{2n+1}$ elementary map if and only if for every $a_0,\ldots,a_m,$ $b_0, \ldots, b_n \in \vlm$, there is some $\Sigma_{2n-1}$ elementary map $k:V_{\lambda+1}\to V_{\lambda+1}$ which is a square root of $j$ such that $a_0,\ldots,a_m\in\rng k$ and for all $\ell\leq n$, $k(b_\ell)=j(b_\ell)$.
\end{lemma}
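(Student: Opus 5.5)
This lemma is the level-$(2n+1)$ analogue of Lemma \ref{sqrrtlem}. I would mirror its proof, with one twist: the square root $k$ will be only $\Sigma_{2n-1}$-elementary. That is exactly enough to pull back a $\Sigma_{2n+1}$ witness once the inner formula has been simplified using $j$. Throughout I identify $j$ with its extension $j^+$ on $\vlm$ and use that $j(j)=jj$.

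\emph{Backward direction.} Suppose $\vlm \models \exists \vec y\, \psi[j(\vec b),\vec y]$, where $\psi$ is $\Pi_{2n}$, and let $\vec a$ witness this. I will choose $k$ with $\vec a\in\rng k$ and $k(\vec b)=j(\vec b)$. Let $\bar{\vec a}=k^{-1}(\vec a)$, so that $\vlm\models\psi[k(\vec b),k(\vec a')]$ with $\vec a'=\bar{\vec a}$.

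The difficulty is that $k$ is only $\Sigma_{2n-1}$-elementary, while $\psi$ is $\Pi_{2n}$. To get around this, I would not pull back $\psi$ directly. I would apply the hypothesis with extra parameters placed in $\rng k$, namely the $\Pi_{2n}$-theory relevant to the witnesses, suitably coded.

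A cleaner route is to use the standard fact that $\Sigma_{2n+1}$ over $\vlm$ reduces to $\Sigma_1$ over $(V_{\lambda+1},\in,T)$, where $T$ is a $\Pi_{2n}$ truth set. It is then enough to demand $j(T)=T$-preservation, and this is where the hypothesis is needed. Concretely, I would strengthen the pullback to $\Sigma_{2n}$ by induction. A $\Sigma_{2n-1}$-elementary square root of a $\Sigma_{2n+1}$ map agrees with $j$ on the relevant $\Pi_{2n}$ facts about $\vec b$. From this I would derive $\psi[\vec b,\bar{\vec a}]$, and the existential then follows.

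\emph{Forward direction.} Assume $j$ is $\Sigma_{2n+1}$. Following \cite[Lemma 16]{BrookeTaylorCramerMiller2024} and \cite[Lemma 2.2]{Laver:1997}, I would consider the statement ``there is a $\Sigma_{2n-1}$-elementary $k$ with $kk=j$, $\vec a\in\rng k$, and $k(\vec b)=j(\vec b)$.'' The set of $\Sigma_{2n-1}$-elementary maps is $\Pi_{2n-1}$-definable over $\vlm$, so this statement is $\Sigma_{2n}$ in the parameters $j,\vec a,\vec b$.

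Its image statement holds via $k=j$ with parameters $jj$, $j(\vec a)$, $j(\vec b)$. Indeed $j(j)=jj$, $j(\vec a)\in\rng j$, and $j$ is $\Sigma_{2n-1}$-elementary. Moreover $j(j(\vec b))$ is the required value, because the parameter $\vec b$ maps to $j(\vec b)$ and we need $k(j(\vec b))=jj(j(\vec b))=j(j(\vec b))$, using the second part of Lemma \ref{lem:sqrrtfacts}. Pulling back by $\Sigma_{2n+1}$-elementarity yields a $k$ with $kk=j$, $\vec a\in\rng k$ and $k(\vec b)=j(\vec b)$.

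\emph{Main obstacle.} I expect the hard step to be verifying the definability count: that ``$k$ is $\Sigma_{2n-1}$-elementary'' is $\Pi_{2n-1}$ uniformly over $\vlm$, and that the backward pullback only needs $\Sigma_{2n-1}$-elementarity of $k$. I would handle the latter by an induction on $n$, reusing Lemma \ref{sqrrtlem} as the base case.
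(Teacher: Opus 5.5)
Your forward direction is correct and is the argument the paper intends, namely the proof of (1)$\Rightarrow$(2) in Lemma \ref{sqrrtlem} lifted one level. The statement ``some $\Sigma_{2n-1}$-elementary $k$ has $kk=j$, $\vec a\in\rng k$, $k(\vec b)=j(\vec b)$'' is witnessed by $k=j$ at the image parameters, and it pulls back along $j$.

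One correction: ``$k$ is $\Sigma_{2n-1}$-elementary'' is naturally $\Pi_{2n}$, since it is a universal quantifier over a Boolean combination of $\Sigma_{2n-1}$-satisfaction statements. It is not $\Pi_{2n-1}$, so the statement you pull back is $\Sigma_{2n+1}$. That is exactly the level $j$ preserves, so nothing is lost. You should drop the $\Pi_{2n-1}$ claim rather than try to prove it; it is not the real obstacle.

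The backward direction, however, has a genuine gap. None of your proposed workarounds works:
\begin{itemize}
\item A $\Pi_{2n}$ truth set $T$ for $\vlm$ is not an element of $\vlm$, so ``$j(T)=T$'' has no meaning, and nothing in the hypothesis controls it.
\item ``A $\Sigma_{2n-1}$-elementary square root of a $\Sigma_{2n+1}$ map'' assumes the very conclusion you are proving.
\item Agreement of $k$ with $j$ on facts about $\vec b$ says nothing about $\bar{\vec a}$.
\end{itemize}

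The missing observation is that the difficulty is illusory. Write $\psi=\forall\vec z\,\chi$ with $\chi\in\Sigma_{2n-1}$, and suppose $\psi[k(\vec b),k(\bar{\vec a})]$ holds. For every $\vec z$ the instance $\chi[k(\vec b),k(\bar{\vec a}),k(\vec z)]$ holds, so $\chi[\vec b,\bar{\vec a},\vec z]$ holds by $\Sigma_{2n-1}$-elementarity of $k$. Hence $\psi[\vec b,\bar{\vec a}]$. Equivalently, use Martin's Lemma \ref{Martin}, which the paper states right afterwards for this purpose: $k$ is already $\Sigma_{2n}$-elementary. No induction on $n$ and no truth sets are needed.

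You also treat only the downward half. $\Sigma_{2n+1}$-elementarity is a biconditional, and unlike the $\Sigma_1$ case of Lemma \ref{sqrrtlem}, the upward half is not automatic. You must also show that $\psi[\vec b,\vec c]$ implies $\psi[j(\vec b),j(\vec c)]$. This again comes from the hypothesis. Given any $\vec z$, choose $k$ with $\vec z\in\rng k$, $k(\vec b)=j(\vec b)$ and $k(\vec c)=j(\vec c)$. Then $\chi[\vec b,\vec c,k^{-1}(\vec z)]$ transfers up along $k$ to $\chi[j(\vec b),j(\vec c),\vec z]$.
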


The nature of the strength hypotheses in the previous two lemmas is clarified by the following result of Martin.

\begin{lemma}[Martin]\label{Martin}
    If $j: V_{\lambda+1} \to V_{\lambda+1}$ is $\Sigma_{2n-1}$-elementary for $n \ge 1$, then $j$ is $\Sigma_{2n}$-elementary. 
\end{lemma}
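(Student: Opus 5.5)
The plan is to split the $\Sigma_{2n}$ formulas into the two directions of elementarity and observe that only one direction needs real work. Write a $\Sigma_{2n}$ formula over $\vlm$ as $\exists X\,\theta(X,\vec a)$ with $\theta$ a $\Pi_{2n-1}$ formula. The upward direction is immediate. If $\vlm\models\theta[X,\vec a]$, then $\Sigma_{2n-1}$-elementarity of $j$ applies to the negation of $\theta$, which is $\Sigma_{2n-1}$. Hence $\vlm\models\theta[j(X),j(\vec a)]$, so $j(X)$ witnesses $\exists X\,\theta(X,j(\vec a))$.

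The content of the lemma is the downward direction. We must show that $\vlm\models\exists X\,\theta(X,j(\vec a))$ implies $\vlm\models\exists X\,\theta(X,\vec a)$, or equivalently that the $\Pi_{2n}$ statement $\forall X\,\neg\theta(X,\vec a)$ transfers upward. A single witness $X$ for $j(\vec a)$ need not lie in the range of $j^+$, so its pullback is not available. The plan is to replace $X$ by its approximations $X\cap V_{\kappa_i}\in V_\lambda$, where $\langle\kappa_i\rangle$ is the critical sequence of $j$ and is cofinal in $\lambda$ because $\operatorname{cof}\lambda=\omega$. Every $X\subseteq V_\lambda$ is determined by the $\omega$-sequence of these approximations, and $j$ is fully elementary on $V_\lambda$.

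Concretely, I would consider the tree $T_{\vec a}$ of pairs $(\alpha,t)$ with $\alpha<\lambda$ and $t\subseteq V_\alpha$ such that some $X\subseteq V_\lambda$ satisfies $X\cap V_\alpha=t$ and $\theta(X,\vec a)$. The aim is to show two things. First, the statement ``$(\alpha,t)\in T_{\vec a}$'' is $\Sigma_{2n}$ with the parameter $t\in V_\lambda$. Second, for each fixed $\alpha$, the set $T_{\vec a}\cap V_{\alpha+1}$ is a single element of $V_\lambda$, and it is characterised by a Boolean combination of $\Sigma_{2n-1}$ and $\Pi_{2n-1}$ statements about that element. Applying $\Sigma_{2n-1}$-elementarity level by level should then give $j(T_{\vec a}\cap V_{\alpha+1})=T_{j(\vec a)}\cap V_{j(\alpha)+1}$. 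This means the trees $T_{\vec a}$ and $T_{j(\vec a)}$ are images of one another level by level, i.e. $j^+(T_{\vec a})=T_{j(\vec a)}$. In particular $T_{j(\vec a)}\neq\emptyset$ forces $T_{\vec a}\neq\emptyset$. Finally, a nonempty branch-closed tree of this kind should yield an actual witness $X$ for $\vec a$ through the $\omega$-sequence of approximations.

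The step I expect to be the main obstacle is the last one: passing from a nonempty tree of approximations to a genuine $X$ satisfying the $\Pi_{2n-1}$ property $\theta(X,\vec a)$. Membership of an $\omega$-limit of approximations in the witness set is not automatic. I would first try to build $T_{\vec a}$ so that membership already encodes a full witness, so that $(\alpha,t)\in T_{\vec a}$ is literally ``$\exists X\,(X\cap V_\alpha=t\wedge\theta(X,\vec a))$''. The only issue is then the transfer of $T_{\vec a}\cap V_{\alpha+1}$, which I would handle by quantifying over the $\Sigma_{2n-1}$ complement and using that $V_\lambda$-sized objects can be quantified over boundedly. If that fails, the fallback is to use the square roots of Lemma \ref{sqrrtlem} and Lemma \ref{lem:reflection}. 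One would choose a square root $k$ of $j$ whose range contains $X$ and which agrees with $j$ on $\vec a$, and then pull $X$ back along $k$ rather than along $j$. The difficulty there is verifying that $k$ inherits enough elementarity to preserve $\theta$.
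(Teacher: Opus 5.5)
Your main route through the tree $T_{\vec a}$ has a genuine gap: it is circular. Everything rests on the claim that $T_{\vec a}\cap V_{\alpha+1}$ is characterised by a Boolean combination of $\Sigma_{2n-1}$ and $\Pi_{2n-1}$ statements, so that $\Sigma_{2n-1}$-elementarity transfers it level by level. But membership $t\in T_{\vec a}$ is $\exists X\,(X\cap V_\alpha=t\wedge\theta(X,\vec a))$, which is $\Sigma_{2n}$, and non-membership is $\Pi_{2n}$. Bounded quantification over the small set $V_{\alpha+1}$ does not remove the unbounded $\exists X\subseteq V_\lambda$. It also does not make the complement a $\Sigma_{2n-1}$ condition. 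At the bottom level, $T_{\vec a}\neq\emptyset$ is literally $\exists X\,\theta(X,\vec a)$, so the identity $j^+(T_{\vec a})=T_{j(\vec a)}$ already contains the $\Sigma_{2n}$ transfer you set out to prove. Cutting $X$ into the approximations $X\cap V_{\kappa_i}$ does not lower the complexity of $\theta$. The $\omega$-limit issue you flag is therefore not the real obstacle.

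Your fallback, by contrast, becomes a complete proof once you match the elementarity of the square root to the complexity of $\theta$. Argue by induction on $n$.
\begin{itemize}
\item Let $X$ satisfy $\theta(X,j(\vec a))$. Use Lemma \ref{sqrrtlem} for $n=1$, or Lemma \ref{lem:reflection} with $n-1$ in place of $n$ for $n\ge 2$, to get a square root $k$ of $j$ with $X,\vec a\in\rng k$. Then $k(\vec a)=j(\vec a)$ by Lemma \ref{lem:sqrrtfacts}(ii).
\item For $n=1$, $k^+$ is $\Sigma_0$-elementary. For $n\ge 2$, $k$ is $\Sigma_{2n-3}$-elementary, hence $\Sigma_{2n-2}$-elementary by the induction hypothesis.
\item Write $\theta=\forall\vec Y\,\rho$ with $\rho$ in $\Sigma_{2n-2}$, and $X=k(\bar X)$. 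For every $\vec Y$ the instance $\rho(k(\bar X),k(\vec a),k(\vec Y))$ holds, so $\rho(\bar X,\vec a,\vec Y)$ holds. Hence $\theta(\bar X,\vec a)$, and $\bar X$ is the required witness.
\end{itemize}
Only a $\Pi_{2n-1}$ statement has to be pulled back, and universal statements pull back instance by instance. So the worry that $k$ might lack enough elementarity does not arise.

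This is not circular. The directions of those lemmas you need only pull down, by $\Sigma_{2n-1}$-elementarity, the statement that some $k$ satisfies $kk=j(j)$ and $j(X),j(\vec a)\in\rng k$, where $k\in\Ecal_\lambda$ if $n=1$ and $k$ is $\Sigma_{2n-3}$-elementary (a $\Pi_{2n-2}$ condition) if $n\ge 2$. That statement is witnessed by $k=j$. The paper itself gives no proof and only attributes the lemma to Martin. This square-root argument uses just the two lemmas placed immediately before it.
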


Given the previous lemmas, we make the following definition.

\begin{dfn}
    Define $\Ecal_\lambda^{n}$ to be the set of nontrivial $j: V_{\lambda} \to V_{\lambda}$ that can be extended to $\Sigma_{n}$-elementary embeddings $j^+:\vlm \to \vlm$.
\end{dfn}

In light of Martin's Lemma (Lemma \ref{Martin}) and Lemma \ref{lem:reflection}, we will talk only about embeddings in $\Ecal^{2n}_\lambda$.

We can then rephrase Lemmas \ref{sqrrtlem} and \ref{lem:reflection} as the following.
\begin{lemma}\label{lem:sqrrtsniceformat}
    $j \in \Ecal_\lambda^{2n+1}$ if and only if for every $a_0,\ldots,a_m,$ $b_0, \ldots, b_n \in \vlm$, there is some $k \in \Ecal_{\lambda}^{2n}$ a square root of $j$ such that $a_0,\ldots,a_m\in\rng k$ and for all $\ell\leq n$, $k(b_\ell)=j(b_\ell)$.
\end{lemma}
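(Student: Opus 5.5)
This lemma is a rephrasing, so I would prove it by combining Lemma \ref{sqrrtlem}, Lemma \ref{lem:reflection} and Martin's Lemma (Lemma \ref{Martin}), splitting into the cases $n=0$ and $n\ge 1$. The only real work is matching the elementarity levels of $j$ and of the square root $k$ to the indexing of the sets $\Ecal^m_\lambda$. Throughout, I identify $j\in\Ecal_\lambda$ with its unique extension $j^+$. For the relevant statements, I also use that an elementary map $\vlm\to\vlm$ restricting to $V_\lambda$ is determined by that restriction (it must agree with $j^+$).

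\emph{Case $n=0$.} Here $\Ecal^0_\lambda$ consists of nontrivial $k$ whose extension is $\Sigma_0$-elementary. Every element of $\Ecal_\lambda$ qualifies, since $k^+$ is always a $\Sigma_0$-embedding, as noted in the background. So the condition ``$k\in\Ecal^0_\lambda$ a square root of $j$'' just says $k\in\Ecal_\lambda$ with $kk=j$. The statement then reads: $j\in\Ecal^1_\lambda$ iff for all $a_i,b_\ell$ there is a square root $k$ of $j$ with the $a_i$ in $\rng k$ and $k(b_\ell)=j(b_\ell)$.
\begin{itemize}
\item For the forward direction, apply (1)$\Rightarrow$(2) of Lemma \ref{sqrrtlem} with $c$'s equal to the $a_i$ together with the $b_\ell$. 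By Lemma \ref{lem:sqrrtfacts}(ii), each $b_\ell\in\rng k$ gives $k(b_\ell)=j(b_\ell)$.
\item The backward direction is immediate from (3)$\Rightarrow$(1) of Lemma \ref{sqrrtlem}, since the hypothesis is even stronger (it also provides $kk=j$).
\end{itemize}

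\emph{Case $n\ge1$.} Lemma \ref{lem:reflection} says $j$ is $\Sigma_{2n+1}$-elementary iff, for all parameters, there is a $\Sigma_{2n-1}$-elementary square root $k$ with the required range and agreement properties. By Martin's Lemma, $\Sigma_{2n-1}$-elementarity of $k^+$ upgrades to $\Sigma_{2n}$-elementarity, so $k\in\Ecal^{2n}_\lambda$. Conversely, $k\in\Ecal^{2n}_\lambda$ trivially implies $k^+$ is $\Sigma_{2n-1}$-elementary. Hence the two formulations coincide, and the equivalence in Lemma \ref{lem:reflection} transfers verbatim to the statement with $j\in\Ecal^{2n+1}_\lambda$.

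The only point needing care is that Lemma \ref{lem:reflection} is phrased for maps on $\vlm$, while $\Ecal^m_\lambda$ refers to $V_\lambda$-embeddings that extend. One must check that a $\Sigma_{2n-1}$-elementary $k:\vlm\to\vlm$ with $kk=j$ restricts to a nontrivial element of $\Ecal_\lambda$ whose canonical extension is that same map. This holds because any $\Sigma_1$-elementary (indeed $\Sigma_0$ and continuous at ranks) map on $\vlm$ satisfies $k(A)=\bigcup_\alpha k(A\cap V_\alpha)$. Nontriviality follows from $kk=j$ being nontrivial. I expect this bookkeeping, rather than any substantive step, to be the main obstacle.
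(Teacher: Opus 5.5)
Your proposal is correct and follows the same route the paper intends: the $n=0$ case comes from Lemma~\ref{sqrrtlem}, with Lemma~\ref{lem:sqrrtfacts}(ii) supplying $k(b_\ell)=j(b_\ell)$. The $n\ge 1$ case is Lemma~\ref{lem:reflection} combined with Martin's Lemma, which identifies ``$\Sigma_{2n-1}$-elementary square root'' with ``square root in $\Ecal^{2n}_\lambda$.'' Your bookkeeping is a correct detail the paper leaves tacit, namely that a $\Sigma_1$-elementary map on $\vlm$ restricts to an element of $\Ecal_\lambda$ whose canonical extension is that same map.
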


Lemma \ref{lem:sqrrtsniceformat} allows us to produce arbitrarily long finite sequences of iterated square roots of $j$ when $j^+$ is a fully elementary embedding $j^+: \vlm \to \vlm$. Lemma \ref{lem:sqrrtfacts} and well-foundedness, however, prevent an infinite sequence of such square roots.

\begin{lemma}\label{lem:arblongsqrrtseq}
    Suppose $j \in \Ecal_\lambda$ and $j^+: \vlm \to \vlm$ is fully elementary. Then for any $p, q < \omega$ and for any $a_0,\ldots,a_m,$ $b_0, \ldots, b_n \in \vlm$, there is a sequence of embeddings $k_i \in \Ecal_\lambda^{2(i+q)}$  for $i \le p$ such that:
    \begin{enumerate}
        \item $k_p$ is a square root of $j$;
        \item For all $i < p$, $k_i$ is a square root of $k_{i+1}$; 
        \item For all $i \le p$, $a_0,\ldots,a_m$ are in the range of $k_i$ and for all $\ell\leq n$, the $k_i$ agree with $j$ on the $b_{\ell}$: $k_i(b_\ell)=j(b_\ell)$; and
        \item Each of $a_0, \ldots, a_m $ are in the range of $k_p \circ k_{p-1} \circ \cdots \circ k_0$.
    \end{enumerate}
\end{lemma}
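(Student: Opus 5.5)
We build the sequence from the top down, by a downward recursion on $i$ from $p$ to $0$, applying Lemma \ref{lem:sqrrtsniceformat} once at each step. Since $j^+$ is fully elementary, $j \in \Ecal_\lambda^{N}$ for every $N$; in particular $j \in \Ecal_\lambda^{2(p+q)+1}$. The subtle requirement is (4): we need $a_0,\ldots,a_m \in \rng(k_p\circ\cdots\circ k_0)$, and not merely in each $\rng k_i$. So at each stage we put suitable preimages into the range of the next root, in addition to the $a$'s themselves.

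First we choose $k_p$. Apply Lemma \ref{lem:sqrrtsniceformat} to $j \in \Ecal_\lambda^{2(p+q)+1}$ with parameter list $a_0,\ldots,a_m$ to be put in the range and $b_0,\ldots,b_n$ to be fixed. This yields $k_p \in \Ecal_\lambda^{2(p+q)}$, a square root of $j$, with the $a_\ell$ in $\rng k_p$ and $k_p(b_\ell)=j(b_\ell)$. For (4), write $a_\ell = k_p(a^{(p)}_\ell)$.

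Now suppose $k_{i+1}\in\Ecal_\lambda^{2(i+1+q)}$ has been built, together with elements $a^{(i+1)}_\ell$ satisfying $k_p\circ\cdots\circ k_{i+1}(a^{(i+1)}_\ell)=a_\ell$. By Martin's Lemma, or simply by monotonicity, $k_{i+1}$ is at least $\Sigma_{2(i+q)+1}$-elementary (here we use $q$ large enough, or note $2(i+1+q)\ge 2(i+q)+1$), so $k_{i+1}\in \Ecal_\lambda^{2(i+q)+1}$. Apply Lemma \ref{lem:sqrrtsniceformat} to $k_{i+1}$. The elements to put in the range are $a_0,\ldots,a_m$ together with $a^{(i+1)}_0,\ldots,a^{(i+1)}_m$. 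The elements on which to agree are $b_0,\ldots,b_n$. This produces $k_i\in\Ecal_\lambda^{2(i+q)}$, a square root of $k_{i+1}$, with all of these in its range and $k_i(b_\ell)=k_{i+1}(b_\ell)=j(b_\ell)$ by induction. Set $a^{(i)}_\ell = k_i^{-1}(a^{(i+1)}_\ell)$. Then $k_p\circ\cdots\circ k_i(a^{(i)}_\ell)=a_\ell$. After the step $i=0$ we obtain (4), and (1)–(3) hold by construction.

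The main point to check is bookkeeping in the elementarity indices. Lemma \ref{lem:sqrrtsniceformat} drops from level $2n+1$ to $2n$. So at each step we need $k_{i+1}\in\Ecal_\lambda^{2(i+q)+1}$, and this is immediate from $k_{i+1}\in\Ecal_\lambda^{2(i+q)+2}$. A second minor point is that Lemma \ref{lem:sqrrtsniceformat} is stated with the same letter $n$ for the number of $b$'s and for the level. We read it, as Lemmas \ref{sqrrtlem} and \ref{lem:reflection} do, as allowing any finite lists of parameters.
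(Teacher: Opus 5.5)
Your proof is correct and follows the paper's route: $p+1$ top-down applications of Lemma \ref{lem:sqrrtsniceformat}, with the same elementarity bookkeeping ($k_{i+1}\in\Ecal_\lambda^{2(i+q)+2}\subseteq\Ecal_\lambda^{2(i+q)+1}$).

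The only difference is in how you get (4). You build it in by adding the partial preimages $a^{(i+1)}_\ell$ to the range requirements at each step. The paper instead derives (4) afterwards: the whole tower $k_{i+1},\ldots,k_p$ lies in $\rng k_i$, so by elementarity of $k_i$ the preimage $(k_p\circ\cdots\circ k_{i+1})^{-1}(a_\ell)$ is already in $\rng k_i$. Your version avoids that transfer argument at no extra cost.
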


\begin{proof}
    We simply apply Lemma \ref{lem:sqrrtsniceformat} $p+1$ many times to $a_0,\ldots, a_m, b_0, \ldots, b_n$ in the natural way to obtain the sequence of embeddings $k_p, k_{p-1}, \ldots, k_1, k_0$, with $k_p$ a square root of $j$, $k_{p-1}$ square root of $k_p$, etc. It is clear from this process that properties (1)-(3) hold. To see that (4) holds, note, for instance, that $a_0 \in \rng k_p$. Since $k_{p-1}$ is a square root of $k_p$, so $k_p\in \rng k_{p-1}$, and $a_0 \in \rng k_{p-1}$, we have that $k_p^{-1}(a_0) \in \rng k_{p-1}$. Hence $a_0$ is in the range of $k_{p} \circ k_{p-1}$. Similarly, by induction, $a_0 \in \rng k_p \circ \cdots \circ k_0$. Thus (4) clearly follows. 
\end{proof}

The proof of Theorem \ref{thm:elem_equiv} uses the following Theorem of Laver-Steel.
\begin{theorem}[Laver-Steel \cite{Laver:1995}]
    Let $j \in \Ecal_\lambda$. Then $\crit \Acal_j$ has order type $\omega$. 
\end{theorem}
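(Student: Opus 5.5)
We would take the known route: show that the critical points of members of $\Acal_j$ are exactly the critical sequence $\kappa_0<\kappa_1<\cdots$ of $j$, or at least that below each $\kappa_n$ there are only finitely many of them. Since $\sup_n \kappa_n=\lambda$ and every critical point is below $\lambda$, either statement gives order type exactly $\omega$.

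\textbf{Step 1 (closure).} First we record the basic closure property. For $a,b\in\Acal_j$ we have $\crit(ab)=a(\crit b)$, by elementarity of $a$ applied to the statement ``$\crit b$ is the least ordinal moved by $b$''. Hence $\crit\Acal_j$ is closed under every $a\in\Acal_j$. By induction on terms, $\crit a\ge\kappa_0$ for every $a\in\Acal_j$: this holds for $j$, and $\crit(ab)=a(\crit b)\ge\crit b$. We also note that $\kappa_n=\crit j^{(n)}\in\crit\Acal_j$, so the set is cofinal in $\lambda$. It therefore suffices to show that $\crit\Acal_j\cap\kappa_n$ is finite for every $n$.

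\textbf{Step 2 (reduction to a least limit point).} Suppose $\crit\Acal_j\cap\kappa_n$ is infinite for some $n$. Then $\crit\Acal_j$ has a limit point below $\lambda$, and we let $\gamma$ be the least one. We aim to contradict this minimality, or well-foundedness of the ordinals, using Step 1. Choose $a\in\Acal_j$ with $\mu=\crit a<\gamma$ such that $a(\mu)\ge\gamma$; such an $a$ should be obtainable because the critical points below $\gamma$ are cofinal in $\gamma$ and each of them is sent strictly upward. Then $a$ maps the infinitely many critical points in $[\mu,\gamma)$ to critical points above $a(\mu)$, and $a(\gamma)$ is a limit point of $\crit\Acal_j$. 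Comparing $\gamma$ with $a(\gamma)$ and pulling back along $a$ should produce a limit point of $\crit\Acal_j$ strictly below $\gamma$, contradicting the choice of $\gamma$. Where needed, the linearity of $<_L$ (Theorem~\ref{Laver'slinearitytheorem}) and left cancellation will be used to write elements with large critical point in the form $ac_1\cdots c_m$.

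\textbf{Main obstacle.} Step 2 is the hard part. The difficulty is that pulling back a limit point along $a$ must land back inside $\crit\Acal_j$, and the pullback of an element of $\Acal_j$ need not lie in $\Acal_j$. I would first try to arrange that the relevant elements are divisible by $a$, so that Theorem~\ref{thm:pullbackiffdivides} places the pullbacks inside $\Acal_j$. Failing that, I would fall back on the finite-quotient argument: define $a\sim_n b$ iff $a(x)\cap V_{\kappa_n}=b(x)\cap V_{\kappa_n}$ for all $x$. One then shows by induction on $n$ that $\Acal_j/{\sim_n}$ is a finite left distributive algebra (a Laver table) whose critical points below $\kappa_n$ are among $\kappa_0,\dots,\kappa_{n-1}$. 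This uses Step 1 and the fact that the values $a(\kappa_i)$ are determined modulo $\sim_{n}$.
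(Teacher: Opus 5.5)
The paper does not prove this result; it quotes it from \cite{Laver:1995}. Your proposal has a genuine gap. Step~1 is fine, but neither Step~2 nor the fallback supplies the essential ingredient, and the fallback rests on a false claim.

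\textbf{The fallback's claim is false.} The critical points of $\Acal_j$ are \emph{not} just the $\kappa_n$: already below $\kappa_3$ there is one other than $\kappa_0,\kappa_1,\kappa_2$. To see this, use $a\circ c=(ac)\circ a$.
\begin{itemize}
\item From $j\circ j=(jj)\circ j$ you get $(jj)(\kappa_1)=\kappa_2$ and $(jj)(\kappa_2)=\kappa_3$; also $(jj)(\kappa_0)=\kappa_0$.
\item Put $b=(jj)j$. Evaluating $(jj)\circ j=b\circ(jj)$ at $\kappa_0$ and at $\kappa_1$ gives $b(\kappa_0)=\kappa_2$ and $b(\kappa_2)=\kappa_3$.
\end{itemize}
Hence $\crit\bigl(b\cdot(jj)\bigr)=b(\kappa_1)$ lies strictly between $\kappa_2$ and $\kappa_3$. (Dougherty showed that the number of critical points below $\kappa_n$ grows very rapidly with $n$.) So the induction you propose, with critical points below $\kappa_n$ among $\kappa_0,\dots,\kappa_{n-1}$, cannot go through. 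Moreover, finiteness of $\Acal_j/{\sim_n}$ is equivalent to the theorem itself, and you give no mechanism for proving it.

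\textbf{The pullback in Step~2 cannot work, not merely is it unproved.} The choice of $a$ itself is fine: take the last right power $b^{(n)}$ of any $b$ with $\crit b<\gamma$ whose critical point is still below $\gamma$. The critical points of the $b^{(n)}$ form the critical sequence of $b$, which is cofinal in $\lambda$. The problem comes after that.
\begin{itemize}
\item No ordinal in $[\mu,a(\mu))$ lies in $\rng a$. If $b=ac$, then $\crit b=a(\crit c)\notin[\mu,a(\mu))$.
\item Since $[\mu,\gamma)\subseteq[\mu,a(\mu))$ contains all but finitely many critical points below $\gamma$, none of the embeddings witnessing them is divisible by $a$. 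So Theorem~\ref{thm:pullbackiffdivides} cannot help.
\item Elementarity only gives $a[\crit\Acal_j]=\crit\Acal_{aj}$, whose least limit point is $a(\gamma)>\gamma$. That yields no contradiction.
\end{itemize}

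\textbf{What is missing.} The closure facts of Step~1 are too weak. You need the genuinely set-theoretic input of Laver and Steel. It yields that for every $\gamma<\lambda$ some left power $j_{[p]}$ (with $j_{[1]}=j$ and $j_{[q+1]}=j_{[q]}j$) satisfies $\crit j_{[p]}\ge\gamma$.

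Given that, the rest is algebra. Let $\gamma$ be a limit, and let $\equiv_\gamma$ be your $\sim_n$ with $\kappa_n$ replaced by $\gamma$.
\begin{itemize}
\item $\equiv_\gamma$ is a congruence, and $j_{[p]}c\equiv_\gamma c$ for all $c$.
\item A descending induction on $q<p$, using $j_{[q]}j_{[r+1]}=(j_{[q]}j_{[r]})\,j_{[q+1]}$, shows $j_{[q]}j_{[r]}\equiv_\gamma j_{[s]}$ for some $s$ with $q<s\le p$.
\item Hence $\Acal_j/{\equiv_\gamma}$ has at most $p$ classes.
\item $\equiv_\gamma$-equivalent embeddings with critical point below $\gamma$ have the same critical point. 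Therefore $\crit\Acal_j\cap\gamma$ is finite.
\end{itemize}
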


We are now ready to prove the theorem. In part (1) of the theorem, the Large Cardinal (LC) assumption is precisely the existence of a nontrivial elementary embedding $V_{\lambda+1} \rightarrow V_{\lambda+1}.$

\begin{theorem}\label{thm:elem_equiv}
    \begin{enumerate}
        \item (LC) Assume that there exists a nontrivial elementary embedding $\vlm \to \vlm$, then  $\Acal_n \equiv_1 \Acal_m$ for all $n,m \le \omega$.
        \item (ZF) $\Acal_n \not \equiv_2 \Acal_m$ for all $n \neq m \le \omega$. 
    \end{enumerate}
\end{theorem}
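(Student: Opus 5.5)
The plan is to treat the two parts separately. Part (2) is a definability observation. Part (1) splits into an easy upward transfer, valid in ZF, and a downward transfer, which is where the large cardinal hypothesis enters.

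\emph{Part (2).} In a free LDA on a set $X$ of generators, the elements that are not products are exactly the generators. This holds because LD-equivalence only rewrites a product into another product, so a generator is never LD-equivalent to any term $yz$. For each finite $r$, I will use the sentence
$$\sigma_r:\ \exists x_1\cdots\exists x_r\Big(\bigwedge_{i<i'} x_i\neq x_{i'}\ \wedge\ \forall y\,\forall z\ \bigwedge_{i} x_i\neq yz\Big).$$
This sentence is $\Sigma_2$. It holds in $\Acal_m$ if and only if $r\le m$. Given $n<m\le\omega$, take $r=n+1$: then $\Acal_m\models\sigma_{n+1}$ while $\Acal_n\not\models\sigma_{n+1}$. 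Only the syntax of terms is involved, so this is a ZF argument.

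\emph{Part (1), upward.} A $\Sigma_1$ sentence is $\exists\bar x\,\varphi(\bar x)$ with $\varphi$ a quantifier-free Boolean combination of term equations. It therefore suffices to show the following: for $n<m$, every finite partial quantifier-free configuration realized in one algebra is realized in the other.
\begin{enumerate}[(a)]
\item First, $\Acal_n$ embeds into $\Acal_m$. Send the generators of $\Acal_n$ to $n$ of the generators of $\Acal_m$. This map has a left inverse: the homomorphism $\Acal_m\to\Acal_n$ that fixes those $n$ generators and sends the remaining generators to $x_1$. The composite is the identity on $\Acal_n$, so the first map is injective. Existential sentences are preserved upward along embeddings.
\item By the same argument, everything reduces to showing that $\Acal_1$ satisfies every $\Sigma_1$ sentence true in $\Acal_m$ (or $\Acal_\omega$).
\end{enumerate}

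\emph{Part (1), downward.} Suppose $\Acal_m\models\varphi[w_1,\dots,w_r]$, where the witnesses $w_i$ are terms in generators $y_1,\dots,y_m$. The finitely many relevant subterms then satisfy a finite list of inequalities $t\neq t'$. The goal is to find $u_1,\dots,u_m\in\Acal_\ell$, for some $\ell\in\Ecal_\lambda$, such that the substitution $y_i\mapsto u_i$ preserves all of these inequalities. Equalities are automatic, because $\Acal_\ell$ is left distributive. By Laver's theorem $\Acal_\ell\cong\Acal$, so this gives $\Acal\models\exists\bar x\,\varphi$. The steps are as follows.
\begin{enumerate}[(i)]
\item Using the fully elementary $j^+$, I will realize the free algebra $\Acal_m$ by embeddings $k_1,\dots,k_m$, following the square-root constructions of \cite{BrookeTaylorCramerMiller2024} as packaged in Lemma \ref{lem:arblongsqrrtseq}. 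I will arrange the $k_i$ to be iterated square roots lying in a common chain, so that $k_i^{(e_i)}$ are all equal to a common $j$-related embedding.
\item Each inequality $t(\bar k)\neq t'(\bar k)$ is witnessed on some $V_\alpha$ with $\alpha<\lambda$. Only finitely many such ordinals are needed.
\item I will then replace each $k_i$ by an element of a single monogenerated algebra that agrees with $k_i$ on this finite amount of data. For this I plan to use Lemma \ref{lem:nthrootfacts}, the relation $p\,p^{(n)}=p^{(n+1)}$ from Lemma \ref{lem:pullbackproperties}, and the fact that $\crit\Acal_j$ has order type $\omega$ (Laver--Steel). The idea is that a root agrees with its power on $\Acal_j$, and that elements of $\Acal_j$ are separated by their restrictions to $\crit\Acal_j$ (Laver, Theorem 13 of \cite{Laver:1995}).
\end{enumerate}

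The main obstacle is step (iii): producing, inside a single $\Acal_\ell$, elements that mimic an $m$-tuple generating a free algebra, up to finitely many inequalities. My first attempt would be to take $\ell$ to be the bottom root $k_0$ in the chain from Lemma \ref{lem:arblongsqrrtseq}, with the parameters describing the finite diagram placed in the range of $k_p\circ\cdots\circ k_0$. I would then express each $k_i$ as a right power or a product of powers of $k_0$, modulo agreement below the relevant critical points, and verify that the finitely many separating values are preserved by this replacement. If this fails, the fallback is a reflection argument. The statement ``there exist $u_1,\dots,u_m$ in the algebra generated by some $e\in\Ecal_\lambda$ satisfying $\varphi$'' is first order over $V_{\lambda+1}$. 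I would try to transfer it to that form from the realization by $k_1,\dots,k_m$, using the elementarity of $j^+$ together with Lemma \ref{lem:sqrrtsniceformat}.
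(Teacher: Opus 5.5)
Your Part (2) and the upward half of Part (1) are correct and match the paper: the same universal definition of the generators, and the same embed-and-preserve-existentials argument. The gap is in the downward transfer, which is the whole large-cardinal content of (1). As written, that part of your plan does not work.

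\textbf{Step (i) is self-defeating.} If $k_1^{(e_1)}=\dots=k_m^{(e_m)}$, then $\langle k_1,\dots,k_m\rangle$ satisfies the equation $y_1^{(e_1)}=y_2^{(e_2)}$. That equation is false in $\Acal_m$, because the leftmost generator of a term is invariant under the rewriting $a(bc)\leftrightarrow(ab)(ac)$. If instead the $k_i$ form a chain of square roots as in Lemma~\ref{lem:arblongsqrrtseq}, then they all lie in $\Acal_{k_0}$, which is monogenerated. Either way such embeddings never realize $\Acal_m$, so there is no free configuration left to approximate.

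\textbf{Step (iii) has no mechanism.} Lemma~\ref{lem:nthrootfacts} and Laver's separation theorem (Theorem 13 of \cite{Laver:1995}) describe agreement and distinctness \emph{inside} $\Acal_j$. Neither produces elements of $\Acal_j$ that agree, on a prescribed $V_\delta$, with an embedding lying outside $\Acal_j$.

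\textbf{The fallback does not help.} ``Some $\Acal_e$ contains a tuple satisfying $\varphi$'' is a parameter-free statement about $\vlm$. Elementarity of $j^+$ only transports statements you already know to be true; it cannot establish a new parameter-free one.

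\textbf{What is missing} is an approximation theorem, together with free generators that are limits of objects the theorem applies to. The paper fixes $j$ with $j^+:\vlm\to\vlm$ elementary and uses the generators of \cite{BrookeTaylorCramerMiller2024}:
\[
k=j\circ j^{(1)}\circ j^{(2)}\circ\cdots, \qquad \ell=j\circ j\circ j^{(1)}\circ\cdots .
\]
These generate a free two-generated algebra. Let $k_n=j\circ\cdots\circ j^{(n)}$ and $\ell_n=j\circ j\circ\cdots\circ j^{(n)}$ be the finite compositions, and let $\langle\delta_n\rangle$ enumerate $\crit\Acal_j$, which has order type $\omega$ by Laver--Steel. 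Since $\delta_n\le\crit j^{(n)}$, the generators agree with $k_n$ and $\ell_n$ on $V_{\delta_n}$.

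Laver's Theorem 10 of \cite{Laver:1995} then gives $k^*,\ell^*\in\Acal_j$ agreeing with $k_n,\ell_n$ on $V_{\delta_n}$. For $n$ large, the finitely many inequalities survive, and the equations hold by freeness.

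Your reduction to ``$\Acal_1$ satisfies every $\Sigma_1$ sentence true in $\Acal_m$'' would also require such an approximable free realization for every $m$. The paper avoids this. It handles only $m=2$ by approximation, and separately shows that $\Acal_3$, hence every $\Acal_n$, embeds into $\Acal_2$. For that it takes embeddings with disjoint iterated ranges and uses Lemma~\ref{lem:reflection} to write $k=\bar\ell j$ and $\ell=\bar\ell\bar\ell$. Then $\langle j,k,\ell\rangle\subseteq\langle j,\bar\ell\rangle$, with both algebras free.
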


\begin{proof}
 For (1), we first show $\Acal_2 \equiv_1 \Acal_1$. Since $\Acal_1$ can be naturally embedded into $\Acal_2$, it suffices to show that any $\Sigma_1$-statement true in $\Acal_2$ is also true in $\Acal_1$. 
 
 Let $j \in \Ecal_{\lambda+1}$. Following \cite{BrookeTaylorCramerMiller2024}, we consider the embeddings $k$ and $\ell$ defined by $k = j \circ j^{(1)} \circ j^{(2)} \circ \cdots$ and $\ell =  j \circ j \circ j^{(1)} \circ j^{(2)} \circ \cdots$.  By \cite{BrookeTaylorCramerMiller2024}, $k, \ell \in \Ecal_\lambda$ and $\mathcal{A}_{k,\ell}$ is a free, 2-generated LDA. 
 
 Suppose $\Acal_{k,\ell}\models \exists \bar x\varphi'(\bar x)$ where $\varphi'(\bar x)$ is quantifier-free. Since $\Acal_{k,\ell}$ is generated by $k$ and $\ell$, we can replace the witness of $\varphi'$ by terms in $k$ and $\ell$, so we may assume that $\Acal_{k,\ell}\models \varphi(k,\ell)$ where $\varphi$ is quantifier-free. It suffices to show that $\Acal_1\models \exists k^* \exists \ell^* \, \varphi(k^*,\ell^*)$. We may assume that $$\varphi(x,y) = \bigwedge\limits_i s_i(x,y) \neq t_i(x,y) \wedge \bigwedge\limits_i s'_i(x,y) = t'_i(x,y)$$ where $s_i(x,y), t_i(x,y), s'_i(x,y), t'_i(x,y)$ are terms in the language of LDAs involving $x,y$.
 
 We define for $n < \omega$, $k_n = j \circ j^{(1)} \circ j^{(2)} \circ \cdots \circ j^{(n)}$ and $\ell_n = j \circ j \circ j^{(1)} \circ j^{(2)} \circ \cdots\circ j^{(n)}$. 
 Let $\la  \delta_n|\, n < \omega \ra$ be the increasing enumeration of $\crit \Acal_j$. We have that for any $n < \omega$, $k_n\restriction V_{\delta_n} = k\restriction V_{\delta_n}$, and $\ell_n\restriction V_{\delta_n} = \ell\restriction V_{\delta_n}$, since $\delta_n \le \crit j^{(n)}$. 
 
 Assume $n$ is large enough so that, for all the (finitely many) $i$'s in $\varphi$ $$s_i(k_n,\ell_n)\rest V_{\delta_n} \neq t_i(k_n,\ell_n)\rest V_{\delta_n}.$$ By Theorem 10 of \cite{Laver:1995} there are embeddings $k^*$ and $\ell^*$ in $\mathcal{A}_j$ such that $$k^* \rest V_{\delta_n} = k_n \rest V_{\delta_n}$$ and $$\ell^* \rest V_{\delta_n} = \ell_n \rest V_{\delta_n}.$$ We then have that for all $i$, $$s_i(k^*, \ell^*) \neq t_i(k^*, \ell^*).$$

    Recall that, in an equational class (in the sense of universal algebra), any positive atomic formula true about the generators of a free structure is true about any tuple in any structure in the class \cite[Theorem 11.4]{BurrisSankappanavar}. In our setting, since $k$ and $\ell$ are free generators of $\Acal_{k,\ell}\cong A_2$, any two elements of any LDA will satisfy any equations that $k$ and $\ell$ satisfy. In particular, since $s_i'(k,\ell) = t_i'(k,\ell)$, we have that $s_i'(k^*,\ell^*) = t'_i(k^*,\ell^*)$. So $k^*$ and $\ell^*$ satisfy $\varphi$, namely, $$ \Acal_1 \models \varphi(k^*,\ell^*) $$
    as desired. 
    
    We now consider the other case of (1) and show that $\Acal_n \equiv_1 \Acal_m$ for $n,m \ge 2$. It suffices to prove that $\Acal_3$ embeds into $\Acal_2$, then proceeds by induction. By Brooke-Taylor-Cramer-Miller-Edwards \cite{BrookeTaylorCramerMiller2024}, we can find $k,\ell$ with disjoint iterated ranges. By Lemma \ref{lem:reflection}, there are some elementary embeddings $j$ and $\bar \ell$ such that $k = \bar\ell j$ and $ \ell = \bar\ell\bar\ell$. Then $\langle j, k, \ell\rangle = \langle j, \bar\ell j, \bar\ell \bar \ell \rangle \subseteq \langle j, \bar\ell\rangle$. We show that $\langle j, k, \ell\rangle$ is free. This follows since $\irng k,\irng \ell \sseq \irng \bar \ell$. So since $\irng \bar \ell$ and $\irng j$ are disjoint (by elementarity of $\bar \ell$ and the fact that $\irng k$ and $\irng \ell$ are disjoint), this implies that $\irng k$ and $\irng \ell$ are disjoint from $\irng j$. 

    Since $\irng j$ and $\irng \bar \ell$ are disjoint, $\la j, \bar \ell \ra$ is also free. Hence we have that since $\la j, k, \ell \ra$ embeds into $\la j, \bar \ell \ra$, that $\Acal_3$ embeds into $\Acal_2$. 
    
    To show (2), notice first that the set of standard generators is definable via the universal formula: $\psi(x) = \forall y, z, x \neq y\cdot z$. Thus, saying the rank of $\Acal_n$ is at least $n$ is $\Sigma_2$ by the sentence $\varphi_n \equiv \exists x_1, \dots, x_n \left(\bigwedge_{i\neq j}x_i \neq x_j \wedge \bigwedge_{1\leq i \leq n} \psi(x_i)\right)$. So if $n > m$, then $\Acal_n \models \varphi_n$ but $\Acal_m \models \neg \varphi_n$. This shows $\Acal_n \not \equiv_2 A_m$.
\end{proof}

\section{Capturing the Algebraic Structure of Elementary Embeddings}\label{CanonicalExtensions}

One of our primary motivating questions is whether algebras of rank-into-rank elementary embeddings always embed into simply definable left distributive algebras. Laver answered this question in the affirmative for algebras with a single generator. %from one elementary embedding. 

We pose this general motivating question as follows.

\begin{question}
\label{qtn:simplydefinablealgembeds}
    Is there a simply definable algebra which embeds $\la E \ra$ for any countable $E \subseteq \Ecal_\lambda$? 
\end{question}

A natural next step beyond the Laver-Steel Theorem would be to consider two rank-to-rank elementary embeddings; namely, given rank-to-rank embeddings $k$ and $\ell$, does $\Acal_{k, \ell} = \la k, \ell \ra$, the algebra generated by closing $k$ and $\ell$ under application and forming equivalence classes under the left distributive law, embed into a simplify definable algebra? 

With certain additional assumptions on our embeddings, we can answer this question in the affirmative. As a warm-up, we describe the idea for proving the following Proposition which is a special case of Corollary \ref{cor:finrigidwksqrfullembeds} below. 
\begin{prop}
    Suppose that $k_0,\ldots, k_n$ are square roots of $j \in \Ecal_\lambda$ such that for all $m < i \le n$, $k_i \in \rng k_m$. Then $\la k_0, \ldots, k_n \ra$ embeds into $\Acal_1$. 
\end{prop}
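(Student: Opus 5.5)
Since $k_i \in \rng k_0$ for every $i\ge 1$, we can pull everything back through $k_0$. For each $i \ge 1$ let $\bar k_i = k_0^{-1}(k_i)$. The plan is to show that the map
$$\la k_0,\ldots,k_n\ra \to \Acal_{k_0}, \qquad u(k_0,\ldots,k_n)\mapsto u(k_0, k_0\bar k_1,\ldots, k_0\bar k_n)$$
is just the identity, provided each $\bar k_i$ lies in $\Acal_{k_0}$. That would realize $\la k_0,\ldots,k_n\ra$ as a subalgebra of $\Acal_{k_0}\cong \Acal_1$ (Laver). So the task reduces to showing that each generator $k_i$ is an element of $\Acal_{k_0}$. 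We then induct on $n$, ordering the roots so that each later one is in the range of each earlier one.

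\textbf{Step 1: identify $k_i$ inside $\Acal_{k_0}$.} Since $k_i \in \rng k_0$, Lemma \ref{lem:sqrrtfacts}(ii) applied to the square root $k_0$ of $j$ gives $k_0(k_i) = j(k_i) = k_0k_0(k_i)$. Likewise $k_0(k_0)=j(k_0)$ because $k_0\in\rng k_0$ trivially fails in general, so we use the roots only as arguments. Apply $k_0$ to the equation $k_ik_i = j = k_0k_0$; since everything involved lies in $\rng k_0$, we get $\bar k_i\bar k_i = \bar j$ where $\bar j = k_0^{-1}(j)$. Now $j=k_0k_0$, so $\bar j = k_0$. Hence $\bar k_i$ is a square root of $k_0$, and $k_i = k_0\bar k_i$.

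\textbf{Step 2: iterate.} The hypothesis ``$k_i\in\rng k_m$ for $m<i$'' pulls back along $k_0$. It says that $\bar k_2,\ldots,\bar k_n$ are square roots of $k_0$ with $\bar k_i \in \rng \bar k_m$ for $1\le m<i$. So $\bar k_1,\ldots,\bar k_n$ is a family of exactly the same shape, with one fewer member, of square roots of $k_0$.

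Repeating Step 1, we descend: each $k_i$ is expressed as $k_0\bar k_1$-type products of deeper pullbacks. At depth $n$ there is a single root $r$ of some embedding. Unwinding the pullbacks, every $k_i$ becomes a term in $r$, giving $k_0,\ldots,k_n \in \Acal_r$. Concretely, $k_0 = r^{(\cdot)}$-style right powers, so every $k_i$ is a term in $r$. Therefore $\la k_0,\ldots,k_n\ra \subseteq \Acal_r\cong\Acal_1$ by Laver's theorem that $\Acal_r$ is free, which gives the embedding.

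\textbf{Main obstacle.} The delicate point is bookkeeping the ranges. We must check that $j$, all the $k_i$, and the relevant applications lie in the range of the embedding being pulled back along. This is needed so that the pullbacks are genuine elements of $\Ecal_\lambda$ and the identities transfer by elementarity, for instance as in the proof of Lemma \ref{lem:kjiskkj}.

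In particular, $j=k_0k_0\in\rng k_0$ holds automatically, and $k_i\in\rng k_0$ is exactly the hypothesis. The range conditions among the $\bar k_i$ at deeper levels hold because ``$x\in\rng y$'' is $\Sigma_0$-definable in $\vlm$ from $x,y$, and $k_0^+$ preserves it. I would verify this $\Sigma_0$ preservation first, since the whole induction rests on it.
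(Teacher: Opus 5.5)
Your Steps~1--2 are correct and are essentially the paper's argument, once you phrase them as an induction on $n$ with hypothesis ``some $r\in\Ecal_\lambda$ has $j,k_0,\ldots,k_n\in\Acal_r$''. Including $j$ in that hypothesis is what gives $k_0\in\Acal_r$, and hence $k_i=k_0\bar k_i\in\Acal_r$, when you unwind. Pulling the family back through $k_0$ and recursing produces $r=k_0^{-1}(k_1^{-1}(\cdots k_{n-1}^{-1}(k_n)))$, with $r^{(n)}=k_0$. This is the same natural pullback the paper reaches by first pulling $k_2$ back through $k_1$ and then pulling everything back through $k_0$; by elementarity of $k_0$ the two orders of pulling back agree. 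The range transfer you flag as the main obstacle follows directly from Lemma~\ref{lem:kjiskkj}, because $k_0^+$ is an injective homomorphism for application.

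Delete your opening reduction, because it is false that $k_i\in\Acal_{k_0}$ for $i\ge 1$. By Lemma~\ref{lem:purelyalgebraicrootboundfact}, the only square root of $k_0k_0$ in $\Acal_{k_0}$ is $k_0$ itself. Moreover $k_i\neq k_0$, since $k_i\in\rng k_0$ while $k_0\notin\rng k_0$ (Lemma~\ref{lem:kjiskkj} together with irreflexivity). The aside ``$k_0(k_0)=j(k_0)$'' in Step~1 is also false, by Lemma~\ref{lem:sqrrtfacts}(ii). Neither claim is used in the argument that actually concludes with $\Acal_r$, so the proof stands once they are removed.
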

\begin{proof}[Proof sketch.]
Suppose first that $k$ and $\ell$ are both square roots of an embedding $j$ and that $k$ is in the range of $\ell$. We represent this in the diagram below, where solid lines indicate that the embedding lower in the diagram is a square root of the embedding above it, and dashed lines mean that the lower embedding has the higher embedding in its range.

\begin{center}
\begin{tikzpicture}
    \draw (0,0) node (j) {$j$};
    \draw (1,-1) node (k) {$k$};
    \draw (-0.5,-2) node (l) {$\ell$};

    \draw (j) -- (k);
    \draw (j) -- (l);
    \draw[dashed] (l) -- (k);
\end{tikzpicture} 
\end{center}

For $k$ and $\ell$ as above, we can show that $\la k, \ell \ra$ embeds into $\Acal$. This is because $\bar k = \ell^{-1}(k)$ generates both $k$ and $\ell$: $\bar k$ is a square root of $\ell$ since we can pull back the fact that $k$ is a square root of $j$ by $\ell$. We can show this in a diagram as follows:

\begin{center}
\begin{tikzpicture}
    \draw (0,0) node (j) {$j$};
    \draw (1,-1) node (k) {$k$};
    \draw (2.25,-1) node (kequiv) {$= \ell(\bar k) = \bar k \bar k \bar k$};
    \draw (-0.5,-2) node (l) {$\ell$};
    \draw (-1,-1.95) node (lequiv) {$\bar k \bar k = $};
    \draw (0.5,-3) node (kbar) {$\bar k$};

    \draw (j) -- (k);
    \draw (j) -- (l);
    \draw[dashed] (l) -- (k);
    \draw (kbar) -- (l);
\end{tikzpicture} 
\end{center}

But $\ell(\bar k) = k$, so we have that $\la k, \ell \ra$ is a subset of $\Acal_{\bar k} \isom \Acal$. 

This argument generalizes to the general case. We illustrate this for $n=2$ in the following diagram:

\begin{center}
\begin{tikzpicture}
    \draw (0,0) node (j) {$j$};
    \draw (1.5,-0.5) node (k3) {$k_2$};
    \draw (1.25,-1.75) node (k2) {$k_1$};
    \draw (-0.5,-3) node (k1) {$k_0$};

    \draw (j) -- (k3);
    \draw (j) -- (k2);
    \draw (j) -- (k1);

    \draw[dashed] (k2) -- (k3);
    \draw[dashed] (k1) -- (k2);
    \draw[dashed] (k1) -- (k3);

\end{tikzpicture} 
\end{center}

Pulling back $k_2$ by $k_1$ we get the following:

\begin{center}
\begin{tikzpicture}
    \draw (0,0) node (j) {$j$};
    \draw (1.5,-0.5) node (k3) {$k_2$};
    \draw (1.25,-1.75) node (k2) {$k_1$};
    \draw (-0.5,-3) node (k1) {$k_0$};
    
    \draw (3,-2.25) node (k2bar) {$\bar k_2 = k_1^{-1}(k_2)$};

    \draw (j) -- (k3);
    \draw (j) -- (k2);
    \draw (j) -- (k1);

    \draw[dashed] (k2) -- (k3);
    \draw[dashed] (k1) -- (k2);
    \draw[dashed] (k1) -- (k3);

    \draw (k2bar) -- (k2);
    \draw[dashed] (k1) -- (k2bar);
\end{tikzpicture} 
\end{center}

Note that $\bar k_2 \in \rng k_0$ since $k_1, k_2 \in \rng k_0$. 
Pulling back $k_2$, $k_1$ and $\bar k_2$ by $k_0$ to $k_2^*$, $k_1^*$, $\bar k_2^*$ we obtain the following diagram:

\begin{center}
\begin{tikzpicture}
    \draw (0,0) node (j) {$j$};
    \draw (1.5,-0.5) node (k2) {$k_2$};
    \draw (1.25,-1.75) node (k1) {$k_1$};
    \draw (-0.5,-3) node (k0) {$k_0$};
    
    \draw (3,-2.25) node (k2bar) {$\bar k_2 = k_1^{-1}(k_2)$};

    \begin{scope}[yshift=-3cm]
    \draw (1.5,-0.5) node (k2star) {$k_2^*$};
    \draw (1.25,-1.75) node (k1star) {$k_1^*$};
    
    \draw (3,-2.25) node (k2barstar) {$\bar k_2^*$};
    \end{scope}
    
    \draw (j) -- (k2);
    \draw (j) -- (k1);
    \draw (j) -- (k0);
    % \draw (j) -- (k0);
    \draw[dashed] (k1) -- (k2);
    \draw[dashed] (k0) -- (k1);
    \draw[dashed] (k0) -- (k2);
    
    \draw (k2bar) -- (k1);
    \draw[dashed] (k0) -- (k2bar);

    \draw (k1star) -- (k0);
    \draw (k2star) -- (k0);
    \draw (k2barstar) -- (k1star);
    \draw[dashed] (k1star) -- (k2star);
\end{tikzpicture} 
\end{center}

The reader can check that $k_0, k_1, k_2 \in \Acal_{\bar k_2^*} \isom \Acal$.
\end{proof}

In the next subsection, we describe how this process can be extended to a general class of finite structures. Extending to countably infinite structures requires moving beyond $\Acal_1$ (see Theorem \ref{thm:countablerigidsqrfull}).

\subsection{Rigid and Square-full Sets of Embeddings}

To describe the main theorem of this section, Theorem \ref{thm:countablerigidsqrfull}, we need the following definitions.

\begin{dfn}
    Suppose that $S \subseteq \Ecal_\lambda$ and $j, k \in \Ecal_\lambda$. We define the following:
    \begin{enumerate}
        \item $S$ is \emph{rigid} if it is linearly ordered by the range relation. 
        \item $k$ is an \emph{iterated square root} of $j$ if for some $n < \omega$, $k^{(n)} = j$.
        \item $S$ is \emph{square-confluent} if for any $j,k \in S$, there exists $n, m < \omega$ such that $j^{(n)} = k^{(m)} \in S$.
        \item $S$ is \emph{square-full} if for all $k \in S$ either $kk \in S$ or every $\ell \in S\setminus\{k\}$ is an iterated square root of $k$.
        \item A rigid $S= \{ k_0, k_1, \ldots, k_n\}$ such that for $i < j$, $k_i \in \rng k_j$ is \emph{weakly square-full} if for $1 \le i \le n$, $k_i$ is an iterated square root of $k_0$ and $k_i k_i \in \Acal_\ell$ for some $\ell \in \{ k_0, \ldots, k_{i-1}\}$.
    \end{enumerate}
\end{dfn}

\begin{rmk}
\begin{enumerate}
    \item Note that while any weakly square-full set $S$ can be extended to a square-full set $S'$, it is not that we can always preserve rigidity for such sets. For instance, suppose $j,k  \in \Ecal_\lambda$, and $k$ is a square root of $jjj$ such that $j \in \rng k$. Set $S = \{j(jj), jj, j, k\}$. Since $jjj \notin \rng j$ and $kk = jjj$, we cannot extend $S$ to a rigid, square-full set. 
    \item Note that if $S$ is finite and square-full, then $S$ must be square-confluent as well, as there must be a top element.    We will use this fact largely without mention below.
    \item Note that any subset of a rigid set is also rigid. 
\end{enumerate}
\end{rmk}

For the next lemma, we define the \emph{right depth} $r(t)$ for a term $t$ representing some word in $\Acal=\Acal_x$ recursively by $r(x) = 1$ and $r(t) = r(v)+1$ if $t = uv$. Dehornoy \cite{Dehornoy:2000} showed that this is well-defined on $\Acal$, namely, if $t$ and $t'$ represent the same word in $\Acal$, then $r(t) = r(t')$. 

\begin{lem}\label{lem:purelyalgebraicrootboundfact}
    Let $a$ be in $\mathcal{A}$ with $a \neq x$ and suppose there exist $m, n < \omega$ such that $a$ is an $m^{th}$ root of $x^{(n)}$, namely, $a^{(m)} = x^{(n)}$. Then $m < n$. 
\end{lem}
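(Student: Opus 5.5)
The plan is to use the right depth $r$, which is an invariant of elements of $\Acal$ by Dehornoy's result quoted just above the statement. I would compute $r$ on both sides of $a^{(m)} = x^{(n)}$ and compare.

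First, I would record how $r$ behaves on right powers. For any term $t$, the recursion $t^{(k+1)} = t^{(k)} t^{(k)}$ and the clause $r(uv) = r(v)+1$ give $r(t^{(k+1)}) = r(t^{(k)}) + 1$. A trivial induction on $k$ then yields $r(t^{(k)}) = r(t) + k$ for every $k < \omega$. In particular, $r(x^{(n)}) = 1 + n$. Also, if $t$ is any term representing $a$, then $t^{(m)}$ is a term representing $a^{(m)}$, and $r(t^{(m)}) = r(t) + m$.

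Second, I would show that $r(a) \ge 2$. Fix a term $t$ representing $a$. Since $a \neq x$ in $\Acal$, the term $t$ is not the single variable $x$. Every term in the one-generator language is either $x$ or of the form $uv$, so $t = uv$ for some terms $u, v$. Because $r(v) \ge 1$ for every term $v$, we get $r(t) = r(v) + 1 \ge 2$.

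Finally, I would combine these. The terms $t^{(m)}$ and $x^{(n)}$ represent the same word of $\Acal$, so well-definedness of $r$ gives $r(t) + m = n + 1$. Hence $m = n + 1 - r(t) \le n - 1 < n$.

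The only nonelementary input is that $r$ is well defined on LD-equivalence classes. I take this from Dehornoy as stated; it can also be checked directly, since the right depth of $a(bc)$ and of $(ab)(ac)$ both equal $r(c)+2$. Everything else is routine bookkeeping, so I do not expect a real obstacle.
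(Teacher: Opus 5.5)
Your proposal is correct and is essentially the paper's own proof. Both arguments compare right depths on the two sides of $a^{(m)} = x^{(n)}$: $r(t^{(m)}) = r(t)+m$ and $r(a)\ge 2$ for $a \neq x$ give $m + r(a) = n+1$, hence $m<n$. Your direct check that $r$ is an LD-invariant is also sound, because $r$ depends only on the right spine, so replacing a subterm by one of equal right depth leaves $r$ unchanged.
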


\begin{proof}
    Note that $r(a^{(m)}) = r(a)+m$, and that $r(a) \ge 2$ if $a \neq x$. Thus, from $a^{(m)} = x^{(n)}$, we have $m+r(a) = n+1$, so we have $m < n$. 
\end{proof}

\begin{lem}\label{lem:boundonrootsweaklysquarefull}
    If $S$ is rigid, weakly square-full, and finite and $S= \{ k_0, k_1, \ldots, k_n\}$ such that for $i < j$, $k_i \in \rng k_j$, then
    \begin{enumerate}
        \item $k_1$ is a square root of $k_0$,
        \item For all $1 \le i \le n$, $k_i$ is an $m$th root of $k_0$ for some $m \le i$. 
    \end{enumerate}
\end{lem}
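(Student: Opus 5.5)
The plan is to transfer each relation among the $k_i$ to an equation between words in a single free monogenerated algebra $\Acal_{k_\ell}\isom\Acal$, and then compare right depths $r(\cdot)$, as in Lemma \ref{lem:purelyalgebraicrootboundfact}. I will prove part (2) by strong induction on $i$; part (1) is the case $i=1$, sharpened by excluding the trivial root.

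Fix $1\le i\le n$. By weak square-fullness, $k_i$ is an iterated square root of $k_0$, say $k_i^{(m)}=k_0$. Moreover $k_ik_i\in\Acal_{k_\ell}$ for some $\ell<i$. By the induction hypothesis, or trivially with $m_0=0$ when $\ell=0$, we have $k_\ell^{(m_\ell)}=k_0$ for some $m_\ell\le\ell$. Write $k_ik_i=w(k_\ell)$ for a term $w(x)$ in the language of LDAs. Since $k_i^{(m)}=(k_ik_i)^{(m-1)}$ (note $m\ge1$, see below), we get
$$w(k_\ell)^{(m-1)} = k_0 = k_\ell^{(m_\ell)}.$$
By Laver's theorem, $\Acal_{k_\ell}\isom\Acal$ with free generator $k_\ell$. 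Hence this is an identity $w(x)^{(m-1)}=x^{(m_\ell)}$ in $\Acal$. Dehornoy's result that right depth is an invariant of LD-equivalence classes then gives
$$r(w)+(m-1) = m_\ell+1.$$
Because $r(w)\ge1$, this yields $m-1\le m_\ell\le \ell\le i-1$, so $m\le i$, which is (2).

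For (1), take $i=1$. Then necessarily $\ell=0$ and $m_0=0$, and the bound gives $m\le1$. To rule out $m=0$, note that $m=0$ would mean $k_1=k_0$. This is impossible: $k_0\in\rng k_1$, while no embedding lies in its own range, since $j\in\rng j$ would give $jj=jjj$ by Lemma \ref{lem:kjiskkj}, contradicting the irreflexivity of $<_L$. The same observation gives $m\ge1$ for every $i\ge1$, because $k_i\ne k_0$ ($k_0\in\rng k_i$). This justifies rewriting $k_i^{(m)}$ as $(k_ik_i)^{(m-1)}$ above. So $k_1$ is a square root of $k_0$.

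The main point needing care is the passage from an equation between embeddings to a formal identity in $\Acal$. This requires that $w(k_\ell)$ and $k_\ell^{(m_\ell)}$ both be evaluated in the free algebra $\Acal_{k_\ell}$, which holds since both lie there. It also requires that right depth be well defined on LD-classes, which is exactly Dehornoy's result. The remaining inequalities are routine.
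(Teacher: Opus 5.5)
Your proof is correct and follows essentially the paper's argument: strong induction on $i$, placing $k_ik_i$ and $k_0 = k_\ell^{(m_\ell)}$ in the free algebra $\Acal_{k_\ell}$ and comparing right depths, which is exactly the content of Lemma \ref{lem:purelyalgebraicrootboundfact}; the paper cites that lemma, whereas you redo the computation inline. Your explicit check that $k_i \neq k_0$, so that $m \ge 1$, via Lemma \ref{lem:kjiskkj} and irreflexivity, fills in a detail the paper leaves implicit.
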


\begin{proof}
    Note that (1) is a special case of (2), and is the base case of the induction to prove (2). To see (1) by definition of weakly square-full we have $k_1 k_1 \in \Acal_{k_0}$. But if $k_1 k_1$ is not $k_0$ then it must be an iterated square root of $k_0$ which is not possible. So it must be $k_0$. 

    We prove (2) by induction on $i$. We want to show that $k_i k_i$ is an $m$th root of $k_0$ for some $m \le i$. We have $k_i k_i \in \Acal_{k_{p}}$ for some $p < i$. Since $k_{p}$ is an $m'$th root of $k_0$ for some $m' \le p$, using Lemma \ref{lem:purelyalgebraicrootboundfact} on $\Acal_{k_p}$ and considering $k_0 = k_p^{(m')}$ and $k_i k_i$ as an iterated root of $k_0$, $k_i k_i$ is an $m''$th root of $k_0$ for some $m'' \le m' \le p < i$ (the equality holding if $k_p = k_i k_i$). So we are done. 
\end{proof}

\begin{lem}\label{lem:rigidsqrfull}
    Suppose $m < \omega$ and $S \subseteq \Ecal_\lambda^m$ is a set of elementary embeddings such that $S$ is finite, rigid, and weakly square-full. Write $S= \{ k_0, k_1, \ldots, k_n\}$ such that for $i < j$, $k_i \in \rng k_j$.
    
    Then $S \subseteq \mathcal{A}_\ell$ for 
    $$\ell = (k_1 \circ k_2 \circ \cdots \circ k_{n})^{-1}(k_0).$$
    Also $\ell \in \Ecal_\lambda^m$ is an $n$th root of $k_0$ for $n = |S|-1$. 
\end{lem}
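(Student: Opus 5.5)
The plan is to argue by induction on $n = |S|-1$, peeling off the lowest embedding $k_n$ at each stage. Note that $(k_1 \circ \cdots \circ k_n)^{-1}(k_0) = k_n^{-1}(\ell')$, where $\ell' = (k_1\circ\cdots\circ k_{n-1})^{-1}(k_0)$ is the embedding the lemma attaches to $S' = \{k_0,\ldots,k_{n-1}\}$.

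\emph{Base case and setup.} When $n=0$ we have $\ell = k_0$ and there is nothing to prove. For the inductive step, $S'$ is still finite, rigid and weakly square-full, since every $k_i k_i$ with $i<n$ lies in some $\Acal_{k_p}$ with $p<i$. By the induction hypothesis, $S' \subseteq \Acal_{\ell'}$, $\ell' \in \Ecal^m_\lambda$, and $\ell'^{(n-1)} = k_0$.

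\emph{Step 1: $\ell'$ lies in the range of $k_n$.} Each of $k_0,\ldots,k_{n-1}$ lies in $\rng k_n$ by the rigidity ordering. The iterated pullback $\ell'$ is $\Sigma_0$-definable in $\vlm$ from these embeddings, so $\ell' \in \rng k_n$ by elementarity of $k_n^+$, as in the proof of Lemma \ref{lem:kjiskkj}. Hence $\ell = k_n^{-1}(\ell')$ exists and $k_n \ell = \ell'$. Since $k_n$ and $\ell' = k_n\ell$ are both in $\Ecal^m_\lambda$, the pullback lemma of Section \ref{section:pullbackalgebra} gives $\ell \in \Ecal^m_\lambda$.

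\emph{Step 2: $S \subseteq \Acal_\ell$.} By weak square-fullness, $k_nk_n \in \Acal_{k_p}$ for some $p<n$, and $\Acal_{k_p} \subseteq \Acal_{\ell'}$. So $k_nk_n = t(\ell')$ for some LD-term $t$. Applying $k_n^{-1}$, which commutes with application because $k_n^+$ is a homomorphism on embeddings in its range, we get
\[
k_n = k_n^{-1}(k_nk_n) = t(k_n^{-1}\ell') = t(\ell) \in \Acal_\ell .
\]
Then $\ell' = k_n\ell = t(\ell)\ell \in \Acal_\ell$, and therefore $S' \subseteq \Acal_{\ell'} \subseteq \Acal_\ell$. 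Together with $k_n \in \Acal_\ell$ this gives $S \subseteq \Acal_\ell$.

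\emph{Step 3: $\ell$ is an $n$th root of $k_0$.} It suffices to show $\ell' = \ell\ell$, since then $\ell^{(n)} = \ell'^{(n-1)} = k_0$. I expect this to be the main obstacle. We know $\ell' = k_n\ell$, and $k_n$ is an iterated root of $k_0$: by Lemma \ref{lem:boundonrootsweaklysquarefull}, $k_n^{(m)} = k_0$ for some $m \le n$. Everything now lives in $\Acal_\ell \cong \Acal$, so this becomes a purely algebraic problem. Writing $a = t(x)$ for the word corresponding to $k_n$, the goal is to show $ax = xx$ from the equations
\[
(ax)^{(n-1)} = a^{(m)}, \qquad m \le n .
\]
My first attempt would be a right-depth count as in Lemma \ref{lem:purelyalgebraicrootboundfact}, which gives $r(a) + m = n+1$, combined with Proposition \ref{prop:commonPower} to compare $a$ and $x$. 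If that is not decisive, I would fall back on left cancellativity and the linearity of $<_L$ (Theorem \ref{Laver'slinearitytheorem}). Alternatively, I would try to establish set-theoretically that $k_n$ and $\ell$ agree at $\ell$: apply Lemma \ref{lem:sqrrtfacts}(ii) to the square-root pair obtained by pulling back the relation that $k_n$ is a root of $k_0$ along $k_n$. That would give $k_n\ell = \ell\ell$ directly.
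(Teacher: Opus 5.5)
\textbf{Steps 1 and 2 are fine.} They are essentially the paper's argument: pull $k_nk_n\in\Acal_{\ell'}$ back along $k_n$ to get $k_n=t(\ell)$, and then $\ell'=k_n\ell\in\Acal_\ell$.

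\textbf{The gap is in Step 3.} The identity $\ell'=\ell\ell$ that you reduce to is false in general. So none of your proposed routes can establish it: not the right-depth count, not linearity of $<_L$, and not Lemma \ref{lem:sqrrtfacts}(ii), which would give $k_n\ell=\ell\ell$.

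Take the warm-up configuration $S=\{k_0,k_1,k_2\}$ with $k_1k_1=k_2k_2=k_0$ and $k_1\in\rng k_2$. Then $\ell'=k_1^{-1}(k_0)=k_1$ and $\ell=k_2^{-1}(k_1)$. Since application by $k_2$ is a homomorphism,
\[
\ell\ell=k_2^{-1}(k_1k_1)=k_2^{-1}(k_0)=k_2\neq k_1=\ell'.
\]
In your algebraic formulation this is $a=xx$, $n=2$, $m=1$. The hypothesis holds: $(ax)^{(1)}=(xx\cdot x)(xx\cdot x)=xx(xx)=a^{(1)}$. Yet $ax=xxx\neq xx$. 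Your reduction only works when the new element's square is the previous one, i.e.\ along a linear chain $k_{i+1}k_{i+1}=k_i$. Weak square-fullness allows $k_nk_n$ to be any earlier element.

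\textbf{The missing idea.} The extra square is gained at the top of the tower, not at the bottom. Pull back the induction hypothesis itself along $k_n$:
\[
k_n\bigl(\ell^{(n-1)}\bigr)=(k_n\ell)^{(n-1)}=\ell'^{(n-1)}=k_0,
\qquad\text{so}\qquad \ell^{(n-1)}=k_n^{-1}(k_0).
\]
By Lemma \ref{lem:boundonrootsweaklysquarefull}, write $k_0=k_n^{(p)}$ with $p\ge 1$. Lemma \ref{lem:pullbackproperties} gives $k_n\,k_n^{(p-1)}=k_n^{(p)}$, hence $k_n^{-1}(k_0)=k_n^{(p-1)}$, which is a square root of $k_0$. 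Therefore
\[
\ell^{(n)}=\ell^{(n-1)}\ell^{(n-1)}=k_n^{(p-1)}k_n^{(p-1)}=k_0 .
\]
This is how the paper closes the induction. In the example above it reads $\ell\ell=k_2$ and $\ell^{(2)}=k_2k_2=k_0$.
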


    In this case, we say that the $\ell$ is the \emph{natural pullback given by $S$}. By the proof of part (4) in Lemma \ref{lem:arblongsqrrtseq}, the natural pullback given by $S$ always exists for sets $S$ that are finite and rigid.

    \begin{dfn}
        Suppose $S\subseteq \Ecal_\lambda$ is nonempty, finite, and rigid. Write $S= \{ k_0, k_1, \ldots, k_n\}$ such that for $i < j$, $k_i \in \rng k_j$. We define the \emph{natural pullback of $S$} to be 
        $(k_1 \circ k_2 \circ \cdots \circ k_{n})^{-1}(k_0).$
    \end{dfn}

    The proof of this lemma is a natural generalization of the `diagram arguments' presented above. To give the reader intuition about how this situation differs, we provide the diagrams for the case that $k_1 k_1 = k_0$, $k_2 k_2 = k_0$, and $k_3 k_3 = k_1$. The diagram of $S= \{k_0, k_1, k_2, k_3 \}$ then looks as follows:
    
\begin{center}
\begin{tikzpicture}
    \draw (0,0) node (k0) {$k_0$};
    \draw (1.5,-0.5) node (k1) {$k_1$};
    \draw (-0.5,-1.75) node (k2) {$k_2$};
    \draw (2,-3) node (k3) {$k_3$};
    
    \draw (k0) -- (k1);
    \draw (k0) -- (k2);
    \draw (k1) -- (k3);
    \draw[dashed] (k1) -- (k2);
    \draw[dashed] (k2) -- (k3);
    \draw[dashed] (k0) -- (k3);
\end{tikzpicture} 
\end{center}

We step by step compute $(k_1 \circ k_2 \circ k_3)^{-1}(k_0)$ and call the sequence of embeddings $\ell_1$, $\ell_2$, $\ell_3$. Pulling back $k_0$ by $k_1$ gives $\ell_1 = k_1$ itself. Pulling this back by $k_2$ we obtain the following:

\begin{center}
\begin{tikzpicture}
    \draw (0,0) node (k0) {$k_0$};
    \draw (1.5,-0.5) node (k1) {$k_1$};
    \draw (-0.5,-1.75) node (k2) {$k_2$};
    \draw (2,-3) node (k3) {$k_3$};
    
    \draw (-2,-2.5) node (l2) {$k_2^{-1}(k_1) = \ell_2$};
    
    \draw (k0) -- (k1);
    \draw (k0) -- (k2);
    \draw (k1) -- (k3);
    \draw (l2) -- (k2);
    \draw[dashed] (k1) -- (k2);
    \draw[dashed] (k2) -- (k3);
    \draw[dashed] (k0) -- (k3);
    \draw[dashed] (k3) -- (l2);
\end{tikzpicture} 
\end{center}

Pulling back  $\ell_2$ by $k_3$ we obtain:

\begin{center}
\begin{tikzpicture}
    \draw (0,0) node (k0) {$k_0$};
    \draw (1.5,-0.5) node (k1) {$k_1$};
    \draw (-0.5,-1.75) node (k2) {$k_2$};
    \draw (2,-3) node (k3) {$k_3$};
    
    \draw (-2,-2.5) node (l2) {$k_2^{-1}(k_1) = \ell_2$};

    \draw (1, -3.5) node (k2star) {$k_2^*$};
    \draw (0, -4.5) node (l3) {$\ell_3=k_3^{-1}(l_2)$};
    \draw (k2star) -- (k1);
    \draw (l3) -- (k2star);
    \draw[dashed] (k2star) -- (k3); 
    
    \draw (k0) -- (k1);
    \draw (k0) -- (k2);
    \draw (k1) -- (k3);
    \draw (l2) -- (k2);
    \draw[dashed] (k1) -- (k2);
    \draw[dashed] (k2) -- (k3);
    \draw[dashed] (k0) -- (k3);
    \draw[dashed] (k3) -- (l2);
\end{tikzpicture} 
\end{center}

Note that $\la \ell_2, k_2, k_0, k_1 \ra \isom \la \ell_3, k_2^*, k_1, k_3 \ra$ because the right embeddings are pullbacks of the left embeddings by $k_3$. This implies the structure of the above diagram. So, retracing our steps, we can check that this entire diagram exists in $\Acal_{\ell_3}$, as desired. 

\begin{proof}[Proof of Lemma \ref{lem:rigidsqrfull}.]
Write $S = \{ k_0, k_1, \ldots, k_n\}$ such that for $i < j$, $k_i \in \rng k_j$.

    By inducting on $i$, we will prove that $k_0, \ldots, k_i$ are generated by $(k_1 \circ k_2 \circ \cdots \circ k_{i})^{-1} (k_0) = \ell_i$ and $\ell_i$ is an $n$th root of $k_0$ for some $n$. For the base case, we need to see that letting $l_1 = k_1^{-1}(k_0)$, then $\{k_0,k_1\}\subseteq \mathcal{A}_{l_1}$. By weak square-fullness of $S$ and the way elements of $S$ are enumerated, it follows from Theorem \ref{lem:boundonrootsweaklysquarefull} that $k_1 k_1 = k_0$, so $k_1^{-1}(k_0) = k_1$; therefore, $k_1 = l_1$ and $\{k_0,k_1\}\subseteq \mathcal{A}_{l_1}$.
    
    Assuming it is true for $i$, we prove it for $i+1$. So $k_0, \ldots, k_i$ are generated by $\ell_i$. We have $\ell_{i+1} = k_{i+1}^{-1}(\ell_i)$. We have that $k_{i+1} k_{i+1} \in \Acal_{k_s}$ for some $s \le i$ by weak square-fullness. Hence by elementarity since $k_s$ is generated by $\ell_i = k_{i+1}\ell_{i+1}$, so is $k_{i+1}k_{i+1}$.  Pulling this fact back by $k_{i+1}$ we have that $k_{i+1}$ is generated by $\ell_{i+1}$. But then $\ell_i$ is generated by $\ell_{i+1}$ since
    $$ \ell_{i} = k_{i+1} \ell_{i+1}.$$
    So $k_0, \ldots, k_{i}$ are also generated by $\ell_{i+1}$. 
    Also, since $\ell_i$ is an $i$th root of $k_0$, we have that $\ell_{i+1} = k_{i+1}^{-1}(\ell_i)$ is an $i$th root of $k_{i+1}^{-1} (k_0)$. 
    On the other hand, $k_{i+1}$ is an iterated square root of $k_0$, so $k_{i+1}^{-1} (k_0)^2 = k_0$ by Lemma \ref{lem:pullbackproperties}. 
    Thus, $$\ell_{i+1}^{(i+1)} = k^{-1}_{i+1}(\ell_i^{(i+1)}) = k^{-1}_{i+1}(k_0^{(1)}) = k_0.$$
    So $\ell_{i+1}$ is an $i+1$th root of $k_0$ as desired and we have proved the induction step.
\end{proof}

We note the following interesting corollary of Lemma \ref{lem:rigidsqrfull}.
\begin{cor}\label{cor:finrigidwksqrfullembeds}
    Suppose that $S \subseteq \Ecal_\lambda$ is finite, rigid, and weakly square-full. Then $\la S \ra$ embeds into $\Acal_1$. 
\end{cor}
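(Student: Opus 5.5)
The plan is to combine Lemma \ref{lem:rigidsqrfull} with Laver's theorem that a single rank-to-rank embedding generates a free algebra.

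First, write $S = \{k_0, \ldots, k_n\}$ in the order given by rigidity, so that $k_i \in \rng k_j$ for $i<j$. Since $S$ is finite and rigid, the proof of part (4) of Lemma \ref{lem:arblongsqrrtseq} shows that the natural pullback
\[
\ell = (k_1 \circ \cdots \circ k_n)^{-1}(k_0)
\]
exists as an element of $\Ecal_\lambda$. One then checks that $\ell$ is nontrivial. It is an iterated root of $k_0$, so its critical point lies below that of $k_0$, as in Lemma \ref{lem:sqrrtfacts}.

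Next, apply Lemma \ref{lem:rigidsqrfull}. Its hypothesis $S \subseteq \Ecal^m_\lambda$ holds with $m=0$, since every element of $\Ecal_\lambda$ extends to a $\Sigma_0$-elementary $j^+$. The lemma gives $S \subseteq \Acal_\ell$. Because $\Acal_\ell$ is closed under application, the subalgebra $\la S \ra$ generated by $S$ is contained in $\Acal_\ell$. Its operation is the restriction of application, so $\la S \ra$ is a subalgebra of $\Acal_\ell$.

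Finally, by Laver's theorem (Corollary (a) after Theorem \ref{Laver'slinearitytheorem}), $\Acal_\ell \cong \Acal = \Acal_1$. Composing the inclusion $\la S \ra \hookrightarrow \Acal_\ell$ with this isomorphism gives an injective homomorphism $\la S \ra \to \Acal_1$, which is the required embedding.

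The only delicate point is making sure Lemma \ref{lem:rigidsqrfull} applies with no elementarity assumption beyond membership in $\Ecal_\lambda$. For that, I would check that its proof uses only $\Sigma_0$ facts: pullbacks and iterated roots are $\Sigma_0$-definable from the embeddings. The rest is formal.
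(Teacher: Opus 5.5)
Your proposal is correct and is exactly the argument the paper intends; the paper states the corollary without proof, as an immediate consequence of Lemma \ref{lem:rigidsqrfull}, which puts $S$ (hence $\la S \ra$) inside $\Acal_\ell$ for the natural pullback $\ell$, and Laver's theorem then gives $\Acal_\ell \cong \Acal_1$. Your closing worry is unnecessary: $\Ecal^0_\lambda = \Ecal_\lambda$ and the lemma is stated for every $m<\omega$, so the case $m=0$ is covered as stated, and nontriviality of $\ell$ is immediate because $\ell^{(n)} = k_0$ is nontrivial.
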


\begin{lemma}[The rigidity extension lemma]
    \label{lem:rigidityextensionlemma}
    If $S \subseteq \Ecal_{\lambda}^{2m}$ is a set of elementary embeddings with $|S| < m$ such that $S$ is finite, rigid, and weakly square-full, then for any $k \in S$, there is a set $S' \supseteq S$ with the following properties:
    \begin{enumerate}
        \item The elements of $S'$ have a lower bound on their strength (degree of elementarity): $S' \subseteq \Ecal_\lambda^{2(m-|S|)}$;
        \item The set $S'$ is finite, rigid, and weakly square-full; and
        \item The natural pullback by $S'$ is in $\Ecal^{2(m-|S|)}_\lambda$ and is an $n$th root of $k$ for some $n < |S|$. 
    \end{enumerate}
\end{lemma}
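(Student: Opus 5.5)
Given $k = k_s \in S$, the plan is to add to $S$ a short chain of iterated square roots of $k$, chosen so that the enlarged set stays rigid and weakly square-full. The natural pullback of the new set will then be an iterated root of $k$. Throughout, we write $S = \{k_0,\ldots,k_n\}$ with $k_i \in \rng k_j$ for $i<j$, so that $n+1 = |S|$.

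\textbf{Construction.} We use Lemma \ref{lem:arblongsqrrtseq}, or more precisely iterated applications of Lemma \ref{lem:sqrrtsniceformat}, since $k$ is only in $\Ecal^{2m}_\lambda$ rather than fully elementary. Each application lowers the elementarity by $2$, and we need at most about $|S|$ of them. We build embeddings $h_p, h_{p-1},\ldots,h_0$ such that:
\begin{itemize}
\item $h_p$ is a square root of $k$, and each $h_{i}$ is a square root of $h_{i+1}$;
\item all of $k_0,\ldots,k_n$ lie in the range of every $h_i$;
\item each $h_i$ lies in $\Ecal_\lambda^{2(m-|S|)}$.
\end{itemize}
The last condition is what the hypothesis $|S|<m$ guarantees. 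We then set
\[
S' = S \cup \{h_p,\ldots,h_0\},
\]
enumerated as $k_0,\ldots,k_n,h_p,h_{p-1},\ldots,h_0$. Rigidity follows from two facts. Each $h_i$ has all of $S$ in its range. Also $h_{i+1}\in\rng h_i$, because $h_{i+1}=h_ih_i$ and Lemma \ref{lem:sqrrtfacts}(ii) applies.

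\textbf{Weak square-fullness.} The only new condition to check is that each $h_i h_i$ lies in $\Acal_\ell$ for some earlier $\ell$. This holds because $h_ph_p=k\in\Acal_k$ and $h_ih_i=h_{i+1}$. The new elements must also be iterated square roots of $k_0$. Since $k$ is an iterated root of $k_0$ by Lemma \ref{lem:boundonrootsweaklysquarefull}, composing the root relations gives this.

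\textbf{The natural pullback.} By Lemma \ref{lem:rigidsqrfull}, applied with $m$ replaced by $2(m-|S|)$, the natural pullback $\ell'$ of $S'$ generates $S'$, lies in $\Ecal^{2(m-|S|)}_\lambda$, and is an iterated root of $k_0$. We will actually take $p=0$, adding a single square root $h_0$ of $k$.

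It remains to show that $\ell'$ is an $n'$th root of $k$ itself with $n'<|S|$. For this we follow the induction in the proof of Lemma \ref{lem:rigidsqrfull}, but track roots of $k$ rather than of $k_0$. The pullbacks $\ell_i$ all lie in a single monogenerated algebra $\Acal_{\ell'}$. Both $k$ and $\ell'$ lie there, and $k$ is an iterated square of $\ell'$ because $k = h_0h_0$ is pulled back along the chain. Once this is established, Lemma \ref{lem:purelyalgebraicrootboundfact}, applied inside $\Acal_{\ell'}\cong\Acal$, together with the count of how many pullbacks are taken, bounds the root index by $|S|-1$.

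\textbf{Main obstacle.} The hardest step is this last one. We must identify $k$ as an iterated power $\ell'^{(n')}$ with $n'<|S|$, rather than merely showing that $k$ and $\ell'$ generate the same algebra. To do this we would compute the right depth $r$ of $k$ in terms of the generator $\ell'$, using that $r$ is well defined on $\Acal$. Each step of the pullback induction raises the root index by at most one, which gives $n'\le n<|S|$.
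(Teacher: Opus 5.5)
\textbf{The gap is your decision to take $p=0$. The right-depth argument you propose to finish it cannot work.}

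Write $k=k_s$ and let $m'\le s$ satisfy $k^{(m')}=k_0$ (Lemma~\ref{lem:boundonrootsweaklysquarefull}). Let $\ell$ be the natural pullback of $S$, so $\ell^{(|S|-1)}=k_0$. With $S'=S\cup\{h_0\}$, the natural pullback is $\ell'=h_0^{-1}(\ell)$. Since $h_0h_0=k$ gives $h_0(k^{(t)})=k^{(t+1)}$, pulling back yields $(\ell')^{(|S|-1)}=h_0^{-1}(k_0)$. This equals $k^{(m'-1)}$ if $m'\ge 1$, and equals $h_0$ if $m'=0$. So your choice works only when $kk=k_0$:

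\begin{itemize}
\item For $k=k_0$ you get $(\ell')^{(|S|)}=k$, i.e.\ $n=|S|$, which breaks the strict bound.
\item For $m'\ge 2$, $\ell'$ need not be an iterated root of $k$ at all.
\end{itemize}

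Here is a concrete instance of the second case. Let $S=\{k_0,k_1,k_2,k_3\}$ with $k_1k_1=k_0$, $k_2k_2=k_1$, $k_3k_3=k_0$ and $k_1,k_2\in\rng k_3$; this set is rigid and weakly square-full. Take $k=k_2$. Then $\ell=k_3^{-1}(k_2)$ and $\ell^{(2)}=k_3$, while $(\ell')^{(3)}=h_0^{-1}(k_0)=k_1$. Right powers are $<_L$-increasing, so the only possible exponent is $2$. But $(\ell')^{(2)}=h_0^{-1}(k_3)$, and this equals $k_2$ only if $k_3=h_0(k_2)=k_2k_2=k_1$, which is false. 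Right depth can only single out a candidate exponent; it cannot show that $k$ equals that power. The claim that each pullback step raises the root index by at most one does not address this.

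\textbf{What is needed is to add exactly $m'$ roots, which is what the paper does.} Take $h_{m'-1}$ a square root of $k$, each $h_i$ a square root of $h_{i+1}$, and $S\subseteq\rng h_i$. Your bookkeeping for rigidity, weak square-fullness and elementarity goes through unchanged.

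Since $k^{(t)}\in\rng h_i$, Lemma~\ref{lem:sqrrtfacts}(ii) gives
$h_i(k^{(t)})=h_{i+1}(k^{(t)})=\cdots=h_{m'-1}(k^{(t)})=k^{(t+1)}$.
Hence $(h_{m'-1}\circ\cdots\circ h_0)^{-1}(k_0)=(h_{m'-1}\circ\cdots\circ h_0)^{-1}(k^{(m')})=k$.

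The natural pullback of $S'$ is $\ell'=(h_{m'-1}\circ\cdots\circ h_0)^{-1}(\ell)$. Pulling back the single equation $\ell^{(|S|-1)}=k_0$ gives $(\ell')^{(|S|-1)}=k$ directly. So $n=|S|-1<|S|$, with no right-depth computation needed.
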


\begin{proof}
    Suppose that $S = \{k_0, k_1, \ldots, k_{|S|-1}\}$ where for all $i < j$, $k_i \in \rng k_j$. 
    The natural pullback $\ell$ given by $S$ is an $(|S|-1)$-th root of $k_0$. Let $k = k_p \in S$. Then $k_p$ is an $m'$th root of $k_0$ for some $m' \le p$ by Lemma \ref{lem:boundonrootsweaklysquarefull}. Let $\ell_{m'-1}, \ell_{m'-2}, \ldots, \ell_0$ be given by the proof of Lemma \ref{lem:arblongsqrrtseq} such that:
    \begin{enumerate}
        \item For all $i < m'$, $\ell_i \in \Ecal^{2(m-|S|+i)}_\lambda$
        \item $\ell_{m'-1}$ is a square root of $k$
        \item $\ell_i$ is a square root of $\ell_{i+1}$ for all $i < m'-1$
        \item For all $i < m'$, $S \subseteq \rng \ell_i$. 
    \end{enumerate}
    It is straightforward to check that $S' = S \cup \{ \ell_0, \ldots, \ell_{m'-1}\}$ has the desired properties. In particular, we want to show that the natural pullback by $S'$, call it $\ell'$, is an $n$th root of $k=k_p$ for some $n <|S|$. To see this, we have $\ell$ (the natural pullback given by $S$) is an $m$th root of $k_0$ for $m <|S|$. We have that 
    $$\ell' = (\ell_{m'-1} \circ \cdots \circ \ell_0)^{-1}(\ell).$$
    Lemma \ref{lem:pullbackproperties} restated in terms of pullbacks implies for $i < m'$ that
    $$\ell_i^{-1} ( k^{(i+1)} ) = k^{(i)}.$$ So we have that 
    $$(\ell_{m'-1} \circ \cdots \circ \ell_0)^{-1}(k_0) = (\ell_{m'-1} \circ \cdots \circ \ell_0)^{-1}(k^{(m')})  = k.$$
    So $\ell'$ is an $m$th root of $k$. Since $m < |S|$, we have what we wanted. 
\end{proof}

The following corollary isolates the property we need below of rigid, weakly square-full sets. 

\begin{cor}
    \label{cor:rigidityextlemcor}
    If $S \subseteq \Ecal_{\lambda}^{2m}$ with $|S| < m$ is a set of elementary embeddings which is finite, rigid, and weakly square-full, then for any $k \in S$, there is $\ell \in \Ecal_\lambda^{2(m-|S|)}$, a $(<|S|)$-iterated root of $k$ such that $S \subseteq \Acal_\ell$.
\end{cor}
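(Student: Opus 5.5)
This corollary should follow directly from the rigidity extension lemma (Lemma \ref{lem:rigidityextensionlemma}) combined with Lemma \ref{lem:rigidsqrfull}. Fix $S\subseteq\Ecal_\lambda^{2m}$ finite, rigid and weakly square-full with $|S|<m$, and fix $k\in S$.

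First, apply Lemma \ref{lem:rigidityextensionlemma} to $S$ and $k$. This gives a finite, rigid, weakly square-full $S'\supseteq S$ with $S'\subseteq\Ecal_\lambda^{2(m-|S|)}$. Let $\ell$ be the natural pullback given by $S'$. By clause (3) of that lemma, $\ell\in\Ecal_\lambda^{2(m-|S|)}$ and $\ell$ is an $n$th root of $k$ for some $n<|S|$. So $\ell$ is a $(<|S|)$-iterated root of $k$ with the required degree of elementarity.

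Second, I need $S\subseteq\Acal_\ell$. Apply Lemma \ref{lem:rigidsqrfull} to $S'$, with the elementarity parameter taken to be $2(m-|S|)$. This is legitimate because $S'$ is finite, rigid, weakly square-full, and contained in $\Ecal_\lambda^{2(m-|S|)}$. The lemma gives $S'\subseteq\Acal_{\ell}$, where $\ell$ is exactly the natural pullback of $S'$. Since $S\subseteq S'$, we get $S\subseteq\Acal_\ell$.

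The only point needing care is bookkeeping, not mathematics. The natural pullback in the conclusion of Lemma \ref{lem:rigidityextensionlemma} must be the same $\ell$ that Lemma \ref{lem:rigidsqrfull} produces. It is, because both are defined as $(k_1\circ\cdots\circ k_N)^{-1}(k_0)$ for the rigid enumeration of $S'$.

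A subtlety is that Lemma \ref{lem:rigidsqrfull} itself asserts that the pullback is an $|S'|-1$ root of the top element $k_0$ of $S'$. This is consistent with the extension lemma's root claim about $k$: if $\ell^{(n)}=k$ with $k$ an $m'$th root of $k_0$, then $\ell$ is an $(n+m')$th root of $k_0$. So no conflict arises, and I would simply cite clause (3) of the extension lemma for the root claim.
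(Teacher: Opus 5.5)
Your proposal is correct and follows the route the paper intends. The paper gives no separate proof, since the corollary is an immediate consequence of Lemma~\ref{lem:rigidityextensionlemma}: take $\ell$ to be the natural pullback of $S'$, use Lemma~\ref{lem:rigidsqrfull} to get $S\subseteq S'\subseteq\Acal_\ell$, and use clause (3) for the root and elementarity bounds (your consistency check, which forces $n=|S|-1$, is also right).
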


\subsection{A Canonical Extension of $\Acal_1$}\label{sec:canonicalextension}

We showed in the last subsection that $\Acal_1$ answers Question \ref{qtn:simplydefinablealgembeds} in the affirmative for finite, rigid, and weakly square-full sets of embeddings $E$. It is natural to ask whether this property can be extended beyond such sets.  Countable, rigid, and weakly square-full sets do not (assuming large cardinals), however, in general embed into $\Acal_1$, as a consequence of Laver's theorem on critical points: the order type of $\{\crit k|\, k \in \Acal_j\}$ is $\omega$. To see this, note that any countable rigid set of square roots of a fixed embedding have distinct critical points which are bounded below $\lambda$ (see \cite{Laver:1997} for this argument in the context of inverse limits). 

We define a natural extension of $\Acal_1$ for which we can embed such sets of embeddings (Theorem \ref{thm:countablerigidsqrfull}). This algebra moreover has many interesting and useful properties which help in analyzing $\Acal_1$. 

\begin{dfn}
Let $\pi:  \Acal_1 \to \Acal_1$ be the map $a \mapsto x a$, where $x$ is the generator of $\Acal_1$. We define $\Ccal_1$ to be the direct limit of $\Acal_1 \to \Acal_1 \to \Acal_1 \to \dots$, where every map is $\pi$. We write $x_n$ to mean the image of the generator of the $n$-th copy of $\Acal_1$ in the direct limit. 
\end{dfn}

\begin{rmk}
    Note that $\Ccal_1$ has many of the same properties as $\Acal_1$. For instance, the $<_L$ order lifts to a linear ordering of $\Ccal_1$. Similarly, for $a, b \in \Ccal_1$, there is $n, m < \omega$ such that $a^{(n)} = b^{(m)}$. 
\end{rmk}

We will show below that we can answer Question \ref{qtn:simplydefinablealgembeds} using the algebra $\Ccal_1$ for a certain class of countable subsets of $\Ecal_\lambda$ (Theorem \ref{thm:countablerigidsqrfull}). 

\begin{theorem}\label{lem:elem}
Assume the large cardinal hypothesis: there exists a nontrivial elementary embedding $\vlm \to \vlm$. 
Then for any $z\in \Ccal_1$, the map $i_z: \Ccal_1 \rightarrow \Ccal_1$ given by $i_z(t) = zt$ is elementary.
\end{theorem}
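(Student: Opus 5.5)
The plan is to prove elementarity of $i_z$ with an Ehrenfeucht--Fra\"iss\'e (back-and-forth) argument. For each $r<\omega$ and each finite tuple $\bar t$ from $\Ccal_1$, I will show that Duplicator wins the $r$-round game on $(\Ccal_1,\bar t)$ versus $(\Ccal_1, z\bar t)$. Since the language is finite and relational after replacing $\cdot$ by its graph, this yields $\Ccal_1\models\varphi(\bar t)\iff\Ccal_1\models\varphi(z\bar t)$ for every $\varphi$.

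First I will record the structure of $\Ccal_1$. Since $\pi(x)=xx$, we have $x_n=x_{n+1}x_{n+1}$, and $\Ccal_1=\bigcup_n\Acal_{x_n}$ is an increasing union of copies of $\Acal_1$. Each $x_{n+1}$ is a square root of $x_n$, and each $x_n$ is an $n$-th iterated root of $x_0$. Consequently any finite configuration in $\Ccal_1$ is a finite configuration inside a single $\Acal_{x_N}\cong\Acal_1$. Moreover $i_z$ is an injective homomorphism by left distributivity and left cancellativity. Hence in the ``forth'' direction Duplicator can always answer $u\mapsto zu$, and this preserves all atomic facts.

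The ``back'' direction is the real content. Spoiler may choose an element $w$ on the right that is not a left multiple of $z$. The idea is to change $z$ as the game proceeds. I will maintain the invariant that the position is $(\bar t,\bar u)$ versus $(z'\bar t,z'\bar u)$ for some $z'\in\Ccal_1$ that agrees with $z$ on $\bar t$, meaning $z'\bar t=z\bar t$. When Spoiler plays $w$, I want a new $z''$ such that $z''\bar t\bar u=z'\bar t\bar u$ and $w=z''w'$ for some $w'$; Duplicator then answers $w'$. This is exactly the algebraic shadow of Lemma \ref{sqrrtlem}(3) and Lemma \ref{lem:sqrrtsniceformat}: for a sufficiently elementary $j$, any finitely many sets can be put into the range of a square root $k$ of $j$ that agrees with $j$ on finitely many prescribed points.

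To transfer this, I fix $j$ with $j^+:\vlm\to\vlm$ fully elementary. I realize $x_0,\dots,x_N$ as a finite rigid chain of iterated square roots $k_0=j, k_1,\dots,k_N$ with $k_{i+1}k_{i+1}=k_i$, obtained from Lemma \ref{lem:arblongsqrrtseq} with enough elementarity to survive $r$ rounds (taking $p$ and $q$ large relative to $r$). This chain is finite, rigid and weakly square-full, so $\Acal_{k_N}\cong\Acal_{x_N}$ by Laver's theorem. At a back step with $z'$ realized by an embedding $e$, I apply Lemma \ref{lem:sqrrtsniceformat} to $e$. This gives an embedding $e'$ (a square root of $e$, or of a suitable power of $e$) whose range contains $w$ and the current finite configuration, and with $e'(b)=e(b)$ for the finitely many current $b$. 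Each step consumes two levels of elementarity, which is why the chain needs depth about $2r$.

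Next I enlarge the rigid set by adding $e'$ and the relevant pullbacks. I then invoke Lemma \ref{lem:rigidityextensionlemma} and Corollary \ref{cor:rigidityextlemcor} to obtain a single root $\ell$ of a member of the chain with everything inside $\Acal_\ell$. Since $\ell$ is an iterated root of $x_N$ and $\Ccal_1$ contains the compatible roots $x_{N+m}$, I will identify $\ell$ with some $x_{N'}$ for $N'\ge N$. The needed uniqueness comes from Lemma \ref{lem:nthrootfacts} and Proposition \ref{prop:commonPower}, which show that iterated roots act alike on $\Acal_j$. With this identification I read off $z''$ and $w'$ in $\Ccal_1$ satisfying the same atomic facts, using that $\Acal_\ell$ is free so equalities and inequalities are absolute.

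I expect the main obstacle to be precisely this back-step transfer. The roots produced by the large-cardinal lemmas must land in a copy of $\Acal_1$ that embeds into $\Ccal_1$ compatibly with the copies already in use, so that $x_n$ is sent to $x_n$. I would first try to guarantee this by always choosing the new roots to be iterated square roots of the current top $x_{N}$, keeping the configuration rigid and weakly square-full. Once that is arranged, I would apply Lemma \ref{lem:rigidityextensionlemma}, which produces an $n$-th root with $n<|S|$, and use its natural pullback as the next $x_{N'}$.
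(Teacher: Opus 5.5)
Your reduction is sound and is essentially the paper's. The forth move $u\mapsto z'u$ and the back move, which replaces $z'$ by some $z''$ with $z''\bar t\bar u=z'\bar t\bar u$ and $z''\mid w$, are the paper's induction on formulas, whose hypothesis ranges over all $z$. The back move is supplied by Lemma~\ref{lem:conesquarerootsexist}, since a square root $z''$ of $z'$ that left-divides $c$ satisfies $z''c=z''z''c=z'c$.

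The gap is in how you produce $z''$, and as set up your transfer cannot work. You put the fully elementary $j$ at $x_0$ and realize $x_1,\dots,x_N$ as roots whose elementarity is fixed in advance in terms of $r$. But Spoiler may play $w\in\Acal_{x_{N'}}$ with $N'$ arbitrarily large. That forces $N'-N$ further roots below $k_N$, at two levels each. Also, Corollary~\ref{cor:rigidityextlemcor} costs $2|S|$ levels, where $|S|=b+2$ grows with the exponent in $e^{(a)}=k_N^{(b)}$. So the cost is not ``two levels per round,'' and no budget depending only on $r$ survives. There are two further problems:
\begin{itemize}
\item Lemma~\ref{lem:sqrrtsniceformat} requires $e\in\Ecal^{2n+1}_\lambda$, and you never justify this for $e$, which is an arbitrary word in $k_N$ rather than a generator.
\item A square root $e'$ of $e$ is an iterated root of $k_N^{(b)}$, not in general of $k_N$. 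So ``always choosing the new roots to be iterated square roots of the current top'' is not available.
\end{itemize}

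The paper avoids all this. In Lemma~\ref{lem:conesquarerootsexist}, independently for each application, it realizes the deepest relevant copy $\Acal_{x_n}$ as $\Acal_j$ with $j\in\Ecal_{\lambda+1}$. Lemma~\ref{lem:squarerootstructurelemma} then takes square roots only of generators: it builds a root of $k=uv$ as $\ell_u\ell_v$, using $v\in\rng\ell_u$, and tracks the cost with $\phi_m$. The result is transported back through $q\circ\cdots\circ q$ and Lemma~\ref{lem:nthrootfacts}.

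Your route can be repaired more directly, as follows.
\begin{enumerate}
\item At each back step, send the current $x_N$ to a fully elementary $j$. Every element of $\Acal_j$ is then fully elementary, by induction on words. Each statement ``$k^+$ is $\Sigma_n$-elementary'' is first-order over $\vlm$ in $k$, so it transfers from $v$ to $uv$ when $u^+$ is fully elementary.
\item Lemma~\ref{lem:sqrrtsniceformat} therefore applies to $e$ itself, giving a square root $e'$ with $j\in\rng e'$.
\item Apply Corollary~\ref{cor:rigidityextlemcor} to $\{j^{(b)},\dots,j,e'\}$, where $e^{(a)}=j^{(b)}$. This gives $\ell$ with $\ell^{(s)}=j$ and $e'\in\Acal_\ell$.
\item Send $\ell\mapsto x_{N+s}$. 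This extends the realization by freeness of $\Acal_\ell$. No ``uniqueness'' from Lemma~\ref{lem:nthrootfacts} is needed, and square roots in $\Ccal_1$ are not unique: $x_{N+1}$ and $x_{N+1}x_{N+2}$ both square to $x_N$.
\item Theorem~\ref{thm:pullbackiffdivides} converts membership in $\rng e'$ into left divisibility.
\end{enumerate}
With this repair, the dependence on $r$ disappears entirely.
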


\begin{remark}
    Theorem \ref{lem:elem} seems to go beyond what is asked in Question \ref{qtn:simplydefinablealgembeds} in the sense that it is showing that (the simply definable algebra) $\Ccal_1$ as a structure has properties similar to $\Ecal_\lambda$ under large cardinal assumptions. Note however, that there are some distinctions between the two situations. For instance, if $\Ecal_{\lambda} \neq \emptyset$, then it is not the case that for all $j \in \Ecal_\lambda$, $j^+\rest \Ecal_\lambda: \Ecal_\lambda \to \Ecal_\lambda$ is elementary. For instance,  if $j$ is such that $\crit j$ is least possible, then $j$ has no square roots. But $jj$ does have a square root (namely $j$).
\end{remark}

In order to prove the theorem above, we need some preliminary lemmas which are variations of our square root existence lemma above.

\begin{lemma}
    \label{lem:simplifiedsquarerootstructurelemma}
    Let $j \in \Ecal_{\lambda+1}$ and $k  \in \Acal_{j}$. Then there is a $q \in \Ecal_\lambda$ such that for some $m < \omega$, $q^{(m)}  = j$, and there is some $\ell \in \Acal_q$ such that $\ell$ is a square root of $k$, and $j \in \rng \ell$. 
\end{lemma}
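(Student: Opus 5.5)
The plan is to build a finite, rigid, weakly square-full set of embeddings that contains $j$, some of its right powers, and a square root of $k$. We then apply Corollary \ref{cor:rigidityextlemcor} to that set to obtain $q$.

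\emph{Step 1 (common power).} Since $k\in\Acal_j\cong\Acal$, Proposition \ref{prop:commonPower} gives $a,b<\omega$ with $k^{(a)}=j^{(b)}$. Put $t=j^{(b)}$. We may take $b\geq 1$ by squaring both sides if necessary.

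\emph{Step 2 (a square root of $k$).} Since $j^+$ is fully elementary, so is $k^+$; application of fully elementary embeddings preserves full elementarity. Choose $M$ larger than $b+3$. By Lemma \ref{lem:sqrrtsniceformat}, or Lemma \ref{lem:arblongsqrrtseq} with $p=0$ and $q$ large, we get a square root $\ell$ of $k$ with $\ell\in\Ecal_\lambda^{2M}$. We also arrange that $j, j^{(1)},\dots ,j^{(b)}$ all lie in $\rng \ell$.

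\emph{Step 3 (the set $S$).} Let
\[S=\{k_0,\dots,k_{b+1}\},\qquad k_0=j^{(b)},\ k_1=j^{(b-1)},\ \dots,\ k_b=j,\ k_{b+1}=\ell.\]
We check the three required properties.
\begin{enumerate}
\item \emph{Rigidity.} For $i<i'\le b$, the embedding $j^{(b-i)}$ lies in $\rng j^{(b-i')}$. Indeed, $j^{(r+1)}=j^{(r)}j^{(r)}$ is in $\rng j^{(r)}$, and more generally $j^{(s)}\in\Acal_{j^{(r)}}$ is in the range of $j^{(r)}$ for $s>r$; this follows by iterating Lemma \ref{lem:pullbackproperties}, since $j^{(s)}=j^{(r)}j^{(s-1)}$. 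All of $k_0,\dots,k_b$ lie in $\rng\ell$ by the choice of $\ell$.
\item \emph{Iterated roots of $k_0$.} Each $k_i$ with $1\le i\le b$ is an iterated square root of $k_0$. Also $\ell^{(a+1)}=k^{(a)}=t=k_0$.
\item \emph{Weak square-fullness.} We have $k_ik_i=k_{i-1}$ for $1\le i\le b$, and $\ell\ell=k\in\Acal_j=\Acal_{k_b}$.
\end{enumerate}
So $S$ is finite, rigid, and weakly square-full. The elementarity hypothesis $S\subseteq\Ecal^{2M}_\lambda$ with $|S|<M$ holds because the $j^{(i)}$ are fully elementary.

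\emph{Step 4 (conclusion).} Apply Corollary \ref{cor:rigidityextlemcor} to $S$ with the distinguished element $k_b=j$. This gives $q\in\Ecal_\lambda$ that is an $m$th root of $j$ for some $m<|S|$, so $q^{(m)}=j$, with $S\subseteq\Acal_q$. In particular $\ell\in\Acal_q$, $\ell\ell=k$, and $j\in\rng\ell$, which is what the lemma asks for.

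The main obstacle is Step 3: verifying the range relations precisely enough to meet the definition of rigid and weakly square-full. This includes the ordering convention that later elements contain earlier ones in their range. It also includes the bookkeeping of elementarity degrees needed to invoke Corollary \ref{cor:rigidityextlemcor}; to handle that, I would first secure enough elementarity by choosing $M$ large in Step 2.
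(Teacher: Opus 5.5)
Your proof is correct, but it takes a different route from the paper's. The paper derives the lemma from the stronger Lemma \ref{lem:squarerootstructurelemma}, which it proves by induction on the length of a word $w$ witnessing $k\in\Acal_j$. Only the base case $k=j$ matches your construction: take a square root $\ell$ of $j$ with $p\in\rng\ell$, form the rigid, weakly square-full set $\{j,p^{(m-1)},\dots,p,\ell\}$, and apply Corollary \ref{cor:rigidityextlemcor}. For $k=uv$, the paper first obtains $\ell_u$ with $p\in\rng\ell_u$, then $\ell_v$ with $q_u\in\rng\ell_v$, and sets $\ell=\ell_u\ell_v$. It then uses $v\in\rng\ell_u$ to compute $(\ell_u\ell_v)(\ell_u\ell_v)=\ell_u v=uv$.

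You instead take a square root of $k$ directly. This needs two inputs that the paper's inductive proof does not use:
\begin{enumerate}
\item $k$ inherits enough elementarity from $j$. This holds because $j$ is fully elementary and application preserves full elementarity.
\item Proposition \ref{prop:commonPower}, which makes $\ell$ an iterated root of $k_0=j^{(b)}$. Together with $\ell\ell=k\in\Acal_{k_b}$, this makes $S$ weakly square-full.
\end{enumerate}
Your route is much shorter. It is essentially the construction the paper itself uses in the proof of Lemma \ref{lem:conesquareseqdecomp}, with the set $\{j^{(m)},\dots,j,k^*\}$ and $k^*$ a square root of an element of $\Acal_j$.

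The paper's inductive route yields a more refined statement. It only ever takes square roots of $j$ and of iterated roots of $j$, never of a general element of $\Acal_j$. So it runs under finite elementarity hypotheses ($j,p\in\Ecal_\lambda^{2r}$) without asking how much elementarity $k$ inherits. It also gives explicit bounds: the root degree satisfies $n<\phi_m(w)$, and $q\in\Ecal_\lambda^{2(r-\phi_m(w))}$. Finally, it carries the extra parameter $p$ into $\rng\ell$. None of these refinements is needed for the lemma as stated, since there $j$ is fully elementary.
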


Though we will use the above version of the lemma, we will prove a stronger, more technical version (Lemma \ref{lem:squarerootstructurelemma}). To state this technical version we need the following definition.

\begin{dfn}
   Define the function $\phi_n$ for $n < \omega$ on words in $A_1=\langle x \rangle$ by induction on word length as follows: $\phi_n(x) = n+2$ and $\phi_n(u v) = \phi_n(u) + \phi_{n+\phi_n(u)}(v)$.
\end{dfn}

\begin{rmk}
    Note the following clearly hold of $\phi$:
    \begin{enumerate}
        \item If $n \le m$ then $\phi_n(w) \le \phi_m(w)$ for any word $w$. 
        \item For any $n < \omega$, if $u$ is a literal subterm of $w$ then $\phi_n (u) \le \phi_n(w)$. 
    \end{enumerate}
\end{rmk}

\begin{lemma}
\label{lem:squarerootstructurelemma}
Fix a term $w$ in $A_1$. 
    For any $r < \omega$ and  $j,p \in \Ecal_{\lambda}^{2r}$  elementary embeddings and $m< \omega$ such that $p^{(m)} = j$ and $r \ge \phi_m(w)$, if $k \in \mathcal{A}_j$ as witnessed by $w$ then there exists a $q \in \Ecal_{\lambda}^{2(r-\phi_m(w))}$ such that for some $n < \phi_m(w)$, $q^{(n)} = p$ and there exists $\ell \in \Acal_{q}$ such that $\ell$ is a square root of $k$ and $p$ is in the range of $\ell$.  (See Figure \ref{sqrrtexists} (dotted lines indicate the lower embedding generates the higher embedding).)
\end{lemma}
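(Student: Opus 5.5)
Our approach is induction on the length of the term $w$, for all $m$, $r$, $j$, $p$ simultaneously. The engine is Lemma \ref{lem:sqrrtsniceformat} in the iterated form of Lemma \ref{lem:arblongsqrrtseq}: it produces square roots with prescribed elements in their range, at a cost of one level of elementarity ($2r \mapsto 2(r-1)$) per root. This cost is what the bookkeeping function $\phi_m$ is designed to track.

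\emph{Base case.} Here $w = x$ and $k = j = p^{(m)}$. Apply Lemma \ref{lem:arblongsqrrtseq} to $p$ with $p$ itself among the required range elements. This gives $q$, a square root of $p$, with $q \in \Ecal_\lambda^{2(r-1)}$ and $p \in \rng q$; it also gives a further square root $q'$ of $q$ with $p \in \rng q'$. We then take $\ell = q^{(m)} \cdots$. Concretely, the element $\ell = q'\,p^{(m-1)}$-type term should work. One checks that $\ell\ell = j$ using Lemma \ref{lem:pullbackproperties}: $q p^{(i)} = p^{(i+1)}$ once $q$ is a root of $p$ with the $p^{(i)}$ in its range (Lemma \ref{lem:sqrrtfacts}(ii)). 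One also checks that $p \in \rng \ell$ by elementarity. At most $2 \le \phi_m(x) = m+2$ roots are used, so the strength bound and the bound $n < \phi_m(x)$ hold.

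\emph{Inductive step.} Write $w = uv$, so $k = k_u k_v$ with $k_u$ witnessed by $u$ and $k_v$ by $v$.
\begin{enumerate}
\item Apply the induction hypothesis to $u$ with data $(r, j, p, m)$. This gives $q_1$ with $q_1^{(n_1)} = p$, $n_1 < \phi_m(u)$, $q_1 \in \Ecal_\lambda^{2(r-\phi_m(u))}$, and $\ell_u \in \Acal_{q_1}$ a square root of $k_u$ with $p \in \rng \ell_u$.
\item Since $q_1^{(n_1+m)} = j$, we have $k_v \in \Acal_j \subseteq \Acal_{q_1}$. Apply the induction hypothesis to $v$ with $p := q_1$ and index $n_1 + m \le m + \phi_m(u)$. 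This is why $\phi_{m+\phi_m(u)}$ appears in the definition, and monotonicity of $\phi$ (the Remark) absorbs the inequality. It yields $q$ with $q^{(n_2)} = q_1$ and $\ell_v \in \Acal_q$ a square root of $k_v$ with $q_1 \in \rng \ell_v$. The total number of roots taken and the total elementarity loss are then bounded by $\phi_m(u) + \phi_{m+\phi_m(u)}(v) = \phi_m(w)$.
\item Set $\ell = \ell_u \ell_v \in \Acal_q$. Then left distributivity gives $\ell\ell = (\ell_u\ell_u)(\ell_u\ell_v) = k_u(\ell_u \ell_v)$. The remaining task is to show $\ell_u\ell_v = k_v$ (or to modify $\ell$ accordingly).
\end{enumerate}

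The main obstacle is exactly this last identity. It requires $\ell_u$ to agree with $k_v$'s square root data, i.e.\ $\ell_u(\ell_v) = \ell_v\ell_v$. By Lemma \ref{lem:sqrrtfacts}(ii) this holds when $\ell_v$ lies in the range of $\ell_u$ and $\ell_u$ acts like its square there. To arrange this, I would strengthen the induction hypothesis. The produced $\ell$ and $q$ should also contain any prescribed finite set of parameters in their ranges, and should agree with given embeddings on prescribed points, as in clause (3) of Lemma \ref{lem:sqrrtsniceformat}. Then in step 2 we would demand that $\ell_v$'s construction places the relevant objects in $\rng \ell_u$. Alternatively, we could pull back via Theorem \ref{thm:pullbackiffdivides} inside $\Acal_{q}$, using that divisibility in $\Acal$ is decidable from $ab = aab$ (Lemma \ref{lem:divisoractionequivalence}). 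The condition $p \in \rng \ell$ then follows since $p \in \rng \ell_u$ and $\rng \ell_u \cap \rng \ell_v$-type arguments carry $p$ through. I expect the verification that the extra range requirements survive each application to be the delicate part.
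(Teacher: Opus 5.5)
Your overall structure matches the paper's: induction on the length of $w$, a second call to the induction hypothesis with $q_1$ in the role of $p$ and index $m+n_1$, $\ell=\ell_u\ell_v$, and the $\phi$-bookkeeping. However, step 3 misapplies left distributivity, and the subgoal it produces is false. In fact $(\ell_u\ell_v)(\ell_u\ell_v)=\ell_u(\ell_v\ell_v)=\ell_u k_v$. Your expression $(\ell_u\ell_u)(\ell_u\ell_v)$ is a different element, namely $\ell_u(\ell_u\ell_v)$.

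The identity $\ell_u\ell_v=k_v$ you set out to prove fails in exactly the configuration your induction builds. Take $w=xx$, so $k_v=j=\ell_u\ell_u$. Then $\ell_u\ell_v=k_v$ would force $\ell_v=\ell_u$ by injectivity of $\ell_u^+$. But $\ell_u\in\Acal_{q_1}\subseteq\rng\ell_v$, because $q_1\in\rng\ell_v$. That would give $\ell_v\in\rng\ell_v$, which is impossible by Lemma \ref{lem:kjiskkj} and irreflexivity.

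No strengthened induction hypothesis is needed. After the correct expansion, what remains is $\ell_u k_v=k_uk_v$. This is Lemma \ref{lem:sqrrtfacts}(ii) for the square root $\ell_u$ of $k_u$, since $k_v\in\Acal_j\subseteq\Acal_p\subseteq\rng\ell_u$ (the range is closed under application and contains $p$). The claim $p\in\rng\ell$ also needs a specific argument rather than a vague appeal to intersecting ranges. Both $p$ and $\ell_u$ lie in $\Acal_{q_1}\subseteq\rng\ell_v$, so the pullback $\ell_u^{-1}(p)$ lies in $\rng\ell_v$. Applying $\ell_u$ then gives $p\in\rng(\ell_u\ell_v)$.

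Your base case also fails. With $q'$ a square root of $q$, $q$ a square root of $p$, and $p\in\rng q'$, the candidate $\ell=q'(p^{(m-1)})$ gives
$\ell\ell=q'(p^{(m)})=q(p^{(m)})=p(p^{(m)})=p^{(m+1)}\neq j$.
This uses Lemma \ref{lem:sqrrtfacts}(ii) twice and Lemma \ref{lem:pullbackproperties}.

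The paper does not look for an explicit term. Lemma \ref{lem:reflection} directly gives $\ell\in\Ecal_\lambda^{2(r-1)}$ with $\ell\ell=j$ and $p\in\rng\ell$. The real difficulty is then to find an iterated root $q$ of $p$ with $\ell\in\Acal_q$. This comes from the rigidity extension lemma, Corollary \ref{cor:rigidityextlemcor}, applied to the rigid, weakly square-full set $\{j,p^{(m-1)},\dots,p,\ell\}$ and the element $p$. That ingredient is missing from your plan, and it is where the cost $\phi_m(x)=m+2$ (in elementarity and in the order of the root) comes from.
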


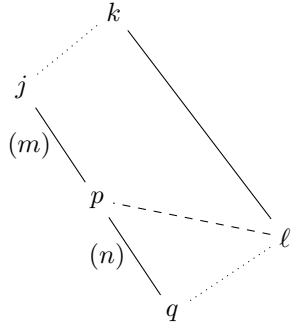
\begin{figure}
\begin{center}
\begin{tikzpicture}
    \draw (1,1) node[anchor=west] (k) {$k$};
    \draw (0,0) node (j) {$j$};
    \draw (1,-1.5) node (p) {$p$};
    \draw (2,-3) node (q) {$q$};
    \draw (3.5, -2) node (l) {$\ell$};

    \draw[dotted] (k) -- (j);
    \draw[dotted] (l) -- (q);
    \draw (k) -- (l);
    \draw[dashed] (l) -- (p);
    \draw (j) -- node[left] {$(m)$} (p);
    
    \draw (p) -- node[left] {$(n)$} (q);
\end{tikzpicture} 
\end{center}
\caption{Embedding configuration for Lemma \ref{lem:squarerootstructurelemma}. The dotted lines indicate that the embedding higher in the diagram is in the range of the embedding lower in the diagram. The $(n)$ and $(m)$ denote an $n$- and $m$-root, respectively.}
\label{sqrrtexists}
\end{figure}

\begin{proof}
    We prove this by induction on the length of $w$. %least length of the word giving $k$ in $\Acal_j$. 
    In the base case $w = x$ and  $k = j$. By Lemma \ref{lem:reflection}, there exists $\ell \in \Ecal_\lambda^{2(r-1)}$ a square root of $k=j$ such that $p \in \rng \ell$. We need to show that there is $q$ such that $q^{(n)} = p$ and $\ell \in \mathcal{A}_{q}$. 

    The set $S = \{j, p^{(m-1)}, p^{(m-2)}, \dots, p, \ell\}$
    is rigid and square-full since $\ell$ is a square root of $j$ and $p \in \rng \ell$. Applying Corollary \ref{cor:rigidityextlemcor} to $S$ and $p$, there is $q \in \Ecal_\lambda^{2(r-1-(m+1))}$, a $<|S|$ iterated-root of $p$ such that $\ell \in \Acal_p$. Note that $|S| = m+1$ and  $r-1-(m+1) \ge 0$ since $r \ge \phi_m(w) = m+2$. So the base case follows.  

    For the general case, we have $k = uv$ and the induction hypothesis holds for words $w_u$ and $w_v$ giving $u$ and $v$ (See Figure \ref{sqrrtexistsindstep}). So $w = w_u w_v$. Applying the induction hypothesis on $j, p,$ and $u$ all in $\Ecal^{2r}_\lambda$, (using that $\phi_m(w) \ge \phi_m(w_u)$), we get $q_u \in \Ecal^{2(r-\phi_m(w_u))}_\lambda$, $\ell_u$, and $n_u < \phi_m(w_u)$ so that $\ell_u \in A_{q_u}$, $p = q_u^{(n_u)}$, and $u = \ell_u^2$. Noting that $j = q_u^{(m+n_u)}$, we again apply the induction hypothesis to $j, q_u, $ and $v$ to get $q_v \in \Ecal^{2(r-\phi_m(w_u) - \phi_{m+n_u}(w_v))}_\lambda, \ell_v,$ and $n_v$ so that $\ell_v \in A_{q_v}$, $q_u = q_v^{(n_v)}$, and $v = \ell_v^2$. Note that
    $$\phi_m(w) = \phi_m(w_u) + \phi_{m+\phi_m(w_u)}(w_v) \ge \phi_m(w_u) + \phi_{m+n_u}(w_v)$$
    since $ n_u \le \phi_m(w_u)$. So in particular, 
    $$2(r-\phi_m(w_u) - \phi_{m+n_u}(w_v)) \ge 0$$
    and $q_v \in \Ecal^{2(r-\phi_m(w))}_\lambda$, as desired. 
    
    We claim that $q = q_v$, $\ell = \ell_u\ell_v$, and $n = n_u + n_v$ satisfies the conclusion of the lemma. First, we compute that $$(\ell_u \ell_v)(\ell_u \ell_v) = \ell_u (\ell_v \ell_v) = \ell_u v = uv = k.$$
    The next to last equality follows from the fact that $v \in \rng \ell_u$, which in turn follows from $p \in \rng \ell_u$ and $v \in A_j = A_{p^{(m)}} \subset A_p$. Then we show that $p \in \rng \ell_u \ell_v$. This follows since $p, \ell_u \in \rng \ell_v$ implies that $\ell_u^{-1}p \in \rng \ell_v$. Applying $\ell_u$ to both sides we get $p \in \rng \ell_u \ell_v$. Lastly, since each of $\ell_u$ and $\ell_v$ is in $A_{q_v}$, $\ell= \ell_u \ell_v$ is also in $A_q = A_{q_v}$.
\begin{figure}
\begin{center}
\begin{tikzpicture}
    \draw (3,3) node (k) {$k= uv$};
    \draw (0,0) node (j) {$j$};
    \draw (0,-2) node (p) {$p$};
    \draw (0,-4) node (qu) {$q_u$};
    \draw (0,-6) node (qv) {$q_v = q$};
    \draw (-2,2) node (u) {$u$};
    \draw (-4,1) node (v) {$v$};
    \draw (-2,-3) node (lu) {$\ell_u$};
    \draw (-4, -5) node (lv) {$\ell_v$};
    \draw (3, -2.5) node (l) {$\ell_u\ell_v = \ell$};

    \draw[dotted] (k) -- (j);
    \draw[dotted] (v) -- (j);
    \draw[dotted] (u) -- (j);
    \draw[dotted] (lu) -- (qu);
    \draw[dotted] (lv) -- (qv);
    \draw[dotted] (l) -- (qv);
    \draw (k) -- (l);
    \draw (u) -- (lu);
    \draw (v) -- (lv);
    \draw[dashed] (l) -- (p);
    \draw[dashed] (lu) -- (p);
    \draw[dashed] (lv) -- (qu);
    \draw (j) -- node[right] {$(m)$} (p);
    \draw (p) -- node[right] {$(n_u)$} (qu);
    \draw (qu) -- node[right] {$(n_v)$} (qv);
\end{tikzpicture} 
\end{center}
\caption{Induction step for Lemma \ref{lem:squarerootstructurelemma}.}
\label{sqrrtexistsindstep}
\end{figure}
\end{proof}

\begin{lemma}
    \label{lem:conesquarerootsexist}
    Assume the large cardinal hypothesis: there exists a nontrivial elementary embedding $\vlm \to \vlm$. 
    Then for all $a, b_0, \ldots, b_m \in \Ccal_1$ there exists an $\bar a \in \Ccal_1$ such that $\bar a \bar a = a$, $\bar a$ left divides $b_i$ and  $\bar a b_i = a b_i$ for all $i \le m$. 
\end{lemma}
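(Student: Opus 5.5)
We work inside the algebra of embeddings: fix $j$ whose extension $j^+:\vlm\to\vlm$ is fully elementary. By Laver's theorem, $\Acal_j\cong\Acal_1$. All the elements $a,b_0,\ldots,b_m$ lie in a common finite stage of the direct limit, so after applying a suitable power of $\pi$ we may assume they are the images of elements of a single copy of $\Acal_1$. Since $\pi$ is application by the generator, being able to realize that stage as $\Acal_j$ with $j$ in the range of suitable embeddings will let us read off $\Ccal_1$ as a direct limit of algebras of embeddings. Concretely, we will represent the $n$-th copy of $\Acal_1$ by $\Acal_{p_n}$, where the $p_n$ satisfy $p_n p_{n+1} = p_{n+1}$ up to the identification of $\pi$ with left multiplication. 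It suffices to produce the required $\bar a$ inside some $\Acal_q$, where $q$ is an iterated root of $j$ with $\Acal_j$ in its range, and then to pull the configuration back into $\Ccal_1$ via the identification of $\Acal_q$ with a later stage.

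The key steps, in order, are as follows.

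First, take $a=k\in\Acal_j$ and $b_i\in\Acal_j$. Apply Lemma \ref{lem:squarerootstructurelemma} with $p=j$, $m=0$, and $r$ large; this is possible because $j\in\Ecal^{2r}_\lambda$ for every $r$. The lemma yields $q$ with $q^{(n)}=j$ and a square root $\ell\in\Acal_q$ of $k$ with $j\in\rng\ell$.

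Second, since $\Acal_j\subseteq\rng\ell$ by elementarity, every $b_i$ is in the range of $\ell$. By Lemma \ref{lem:sqrrtfacts}(ii), this gives $\ell b_i = (\ell\ell)b_i = k b_i = a b_i$. Lemma \ref{lem:kjiskkj} then shows $\ell b_i=\ell\ell b_i$, and Theorem \ref{thm:pullbackiffdivides} (equivalently Lemma \ref{lem:divisoractionequivalence}) shows that $\ell$ divides $b_i$ in $\Acal_q$.

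Third, transfer back to $\Ccal_1$. Both $\Acal_q$ and $\Acal_j$ are free on one generator, and $j=q^{(n)}$. This gives an embedding $\Acal_j\to\Acal_q$ sending $j\mapsto q^{(n)}$, which must be matched with the map $\Acal_1\to\Acal_1$, $x\mapsto x^{(n)}$. To land inside the direct system we also need compatibility with $\pi$. For this we use Lemma \ref{lem:pullbackproperties}: $q\,q^{(n-1)} = q^{(n)}$, so $x^{(n)}$ is itself of the form $\pi$ applied to something only after rearranging. The cleaner route is to note that $x\mapsto x^{(n)}$ and $\pi^n$ agree up to composition with left multiplications, since $x^{(n)} = x^{(n-1)}x^{(n-1)}$. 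I therefore expect to replace $q$ by a pullback in order to normalize it.

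The main obstacle is exactly this identification: showing that the equations $\bar a\bar a=a$, $\bar a c_i=b_i$, and $\bar a b_i=ab_i$, found in $\Acal_q$ with $\Acal_j$ sitting as the subalgebra generated by $q^{(n)}$, give equations in $\Ccal_1$ with $a,b_i$ in their original positions. My first attempt is to strengthen the choice of $q$ so that the embedding $\Acal_j\hookrightarrow\Acal_q$ is literally $\pi^n$. To do this, I would use Lemma \ref{lem:simplifiedsquarerootstructurelemma}-type arguments to arrange that $j=q'q'\cdots$ lies in the range of a chain $r_1,\dots,r_n$ with $r_{t}\,r_{t+1}^{-1}$ relations mimicking $\pi$. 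Alternatively, I would show directly in $\Ccal_1$ that $x_{s}\mapsto x_{s}^{(n)}$ extends to an automorphism-like embedding commuting with the limit maps; this holds because $x_s x_{s+1}=x_s$-type identities are preserved, by Lemma \ref{lem:pullbackproperties}. Since equations are positive, any LD-homomorphism preserves them. Hence once such a compatible homomorphism into $\Ccal_1$ fixing $a$ and the $b_i$ exists, the conclusion follows.
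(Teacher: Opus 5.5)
Your first two steps are sound and match the paper. Lemma \ref{lem:squarerootstructurelemma} with $p=j$, $m=0$ (equivalently Lemma \ref{lem:simplifiedsquarerootstructurelemma}) gives $q$ with $q^{(n)}=j$ and a square root $\ell\in\Acal_q$ of $k$ with $j\in\rng\ell$. Then each $b_i\in\Acal_j$ has the form $\ell c_i$ with $c_i\in\Acal_q$.

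The gap is the third step, the transfer back into $\Ccal_1$. You call it ``the main obstacle'' and never carry it out, and your diagnosis of it is wrong. $\pi^n$ does not merely agree with $x\mapsto x^{(n)}$ ``up to composition with left multiplications''; it \emph{is} that homomorphism. Indeed $\pi$ is a homomorphism and $\pi(x^{(i)})=x\,x^{(i)}=x^{(i+1)}$ by Lemma \ref{lem:pullbackproperties}, so $\pi^n(x)=x^{(n)}$. A homomorphism out of a monogenerated algebra is determined by where it sends the generator. So no normalization of $q$ by pullbacks or chains $r_1,\ldots,r_n$ is needed. Your alternative route also rests on a false identity: in $\Ccal_1$ consecutive generators satisfy $x_{s+1}x_{s+1}=x_s$, not $x_sx_{s+1}=x_s$, which would give $x_s<_L x_s$.

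The missing step is short. Suppose $a$ and the $b_i$ are the images of $k,k_{b_i}\in\Acal_j$ from stage $s$. The stage-$s$ limit map is then the homomorphism $\Acal_j\to\Ccal_1$ with $j\mapsto x_s$. Since $\Acal_q$ is free on $q$, let $\tau:\Acal_q\to\Ccal_1$ be the homomorphism with $\tau(q)=x_{s+n}$. Then
$\tau(j)=\tau(q^{(n)})=x_{s+n}^{(n)}=\pi_{s+n,\omega}(\pi^n(x))=\pi_{s,\omega}(x)=x_s$.
Hence $\tau\rest\Acal_j$ is exactly the stage-$s$ limit map, so $\tau(k)=a$ and $\tau(k_{b_i})=b_i$. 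Put $\bar a=\tau(\ell)$. Positive equations transfer along homomorphisms, so $\bar a\bar a=a$ and $b_i=\bar a\,\tau(c_i)$. Then left distributivity gives $\bar a b_i=\bar a(\bar a\,\tau(c_i))=(\bar a\bar a)(\bar a\,\tau(c_i))=ab_i$.

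This is what the paper does. Its map $\pi_{n+r,\omega}$ composed with the $r$-fold composite $q\circ\cdots\circ q$, restricted to $\Acal_q$, is this $\tau$ (the paper's $n,r$ are your $s,n$). The paper checks the compatibility $\tau\rest\Acal_j=\pi_{n,\omega}$ via Lemma \ref{lem:nthrootfacts} ($q\rest\Acal_j=j\rest\Acal_j$); the generator computation above gives it directly.
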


\begin{proof}
    Recall that $\Ccal_1$ is the direct limit of the system $\Acal_1 \to \Acal_1 \to \cdots$ where every map is the embedding $\pi$ defined by $x \mapsto xx$. For any $j \in \Ecal_\lambda$, we have $\Acal_1 \isom \Acal_j$. And hence we can similarly identify $\Ccal_1$ with the direct limit of $\Acal_j \to \Acal_j \to \cdots$ where every embedding is defined by $j \mapsto jj$. 

    Fix $j \in \Ecal_{\lambda+1}$ and let $\pi_{n,\omega}: \Acal_j \to \Ccal_1$ be the natural maps, with $\pi_{n,m}: \Acal_j \to \Acal_j$ the factor maps for $n < m$. We have that $\pi_{n,m} = j \circ j \circ \cdots \circ j$, with $(m-n)$-many compositions. Let $a,b_0, \ldots, b_m \in \Ccal_1$. Let $n < \omega$ be such that for some $k_a$ and $k_{b_i}$, $\pi_{n,\omega}(k_a) = a$ and $\pi_{n,\omega}(k_{b_i}) = b_i$ for all $i \le m$.  

    By Lemma \ref{lem:simplifiedsquarerootstructurelemma}, there is a $q\in \Ecal_\lambda$ such that for some $r < \omega$, $q^{(r)} = j$ and there is $\bar k_a \in \Acal_q$ such that $\bar k_a$ is a square root of $k_a$ and $j \in \rng \bar k_a$ (and hence $k_{b_0}, \ldots, k_{b_m} \in \rng \bar k_a$ since these are in $\Acal_j$). 

    Let $\ell = q \circ q \circ \cdots \circ q$ where there are $r$-many compositions of $q$. We have that $\ell(q) = j$, and hence $\ell\rest\Acal_q: \Acal_q \to \Acal_j$ is an isomorphism. 
    
    We claim that $\bar a = \pi_{n+r, \omega}(\ell(\bar k_a))$ satisfies that $\bar a \bar a = a$ and $\bar a$ left divides $b_i$ for all $i \le m$. To see this, note that by Lemma \ref{lem:nthrootfacts} we have $q \rest \Acal_j = j \rest \Acal_j$, and hence 
    $$\ell \rest \Acal_j  = \underbrace{q \circ q \circ \cdots \circ q}_{r\text{ times}} \rest \Acal_j= \underbrace{j \circ j \circ \cdots \circ j}_{r\text{ times}} \rest \Acal_j = \pi_{n,n+r}.$$
    So we have, by elementarity of $\ell$, that since $\bar k_a$ is a square root of $k_a$ that $\ell(\bar k_a)$ is a square root of $\pi_{n,n+r}(k_a) = \ell(k_a)$.\footnote{Note here that we cannot apply $\pi_{n,n+r}$ to $\bar k_a$.} And hence $\bar a = \pi_{n+r, \omega}(\ell(\bar k_a))$ is a square root of 
    $$\pi_{n+r, \omega}(\pi_{n,n+r}(k_a)) = \pi_{n,\omega}(k_a) = a.$$
    
    A similar computations shows that since $k_{b_0}, \ldots, k_{b_m} \in \rng \bar k_a$ that $\bar a$ left divides $b_0, \ldots, b_m$.  This is because for each $i$, $k_{b_i}\in \rng \bar{k}_a$, so by elementarity of $\ell$, $\pi_{n,n+r}(k_{b_i}) = \ell(k_{b_i})\in \rng \ell(\bar{k_a})$. By elementarity of $\pi_{n+r,\omega}$, $b_i = \pi_{n+r,\omega}(\pi_{n,n+r}(k_{b_i}))$ is in the range of $\bar{a} = \pi_{n+r,\omega}(\ell(\bar{k_a}))$.
    By Lemma \ref{lem:divisoractionequivalence} the full result follows. See Figure \ref{limit}.

    \begin{figure}
\begin{center}
    \begin{tikzpicture}
        \draw (-2,0) node (An) {$\Acal_j$};
        \draw (2,0) node (Anplusr) {$\Acal_j$};
        \draw (-2,2) node (Aq) {$\Acal_q$};
        \draw (6,0) node (Cone) {$\Ccal_1$};
        \draw (2,-1) node (lkbara) {$\ell(\bar k_a)$};
        \draw (-2,-2) node (ka) {$k_a$};
        \draw (-2,-1) node (barka) {$\bar k_a$};
        \draw (2,-2) node (pinnplusrka) {$\pi_{n,n+r}(k_a)$};
        \draw (6,-1) node (bara) {$\bar a$};
        \draw (6,-2) node (a) {$a = \pi_{n, \omega}(k_a)$};

        \draw[->] (Aq) -- node[above right] {$\ell\rest \mathcal{A}_q$} (Anplusr);
        \draw[->] (Anplusr) -- node[above] {$\pi_{n+r,\omega}$} (Cone);
        \draw[->] (An) -- node[above] {$\pi_{n,n+r}$}  (Anplusr);

        \draw[|->] (barka) -- node[above] {$\ell$} (lkbara);
        \draw[|->] (ka) -- node[above] {$\pi_{n,n+r} = \ell \rest \Acal_j$} (pinnplusrka);
        \draw[|->] (lkbara) -- node[above] {$\pi_{n+r,\omega}$} (bara);
        \draw[|->] (pinnplusrka) -- node[above] {$\pi_{n+r,\omega}$} (a);
        
    \end{tikzpicture}
    \caption{Diagram of the proof of Lemma \ref{lem:conesquarerootsexist}}
        \label{limit}
    \end{center}
    \end{figure}
\end{proof}

\begin{proof}[Proof of Theorem \ref{lem:elem}]
By induction on complexity of $\phi$, suppose that $z, c_0, \ldots, c_n \in \Ccal_1$ and $\Ccal_1 \models \exists y\, \phi[z \cdot c_0,\ldots, z \cdot c_n, y] $. Let $b \in \Ccal_1$ be a witness to this statement; we want to find a witness in $z\Ccal_1$. We have by Lemma \ref{lem:conesquarerootsexist} that there is $\bar z$ a square root of $z$ such that $\bar z c_i = z c_i$ for all $i \le n$ and $\bar z$ left-divides $b$, say $b = \bar z \bar b$. We then have that by our inductive hypothesis, $$\phi[z c_0,\ldots, z c_n, \bar z \bar b] \implies \phi[\bar{z}c_0, \ldots, \bar z c_i, \bar{z}\bar{b}] \implies \phi[c_0, \ldots c_n, \bar{b}].$$ Pushing forward by $z$ we have that $\Ccal_1 \models \phi[zc_0, \ldots, z c_n,z\bar{b}]$ as desired. 
\end{proof}

\begin{corollary}\label{cor:elemembwitness}
    Assume the large cardinal hypothesis: there exists a nontrivial elementary embedding $\vlm \to \vlm$. Let $z, a_0,\ldots, a_n \in \Ccal_1$. 
    Assume $\phi$ is a formula such that $\exists x \phi[x, a_0, \ldots, a_n]$ holds in $\Ccal_1$. 
    Also, assume that $z$ left-divides each $a_0, \ldots, a_n$. Then there exists some $x^* \in \Ccal_1$ satisfying $\phi[x^*, a_0, \ldots, a_n]$ such that $z$ left-divides $x^*$.
\end{corollary}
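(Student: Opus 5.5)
The plan is to pull back along $z$ using the elementarity of $i_z$ from Theorem \ref{lem:elem}. Since $z$ left-divides each $a_i$, we can write $a_i = z\bar a_i$ for some $\bar a_i \in \Ccal_1$, and we fix such preimages. The hypothesis then reads $\Ccal_1 \models \exists x\, \phi[x, z\bar a_0, \ldots, z\bar a_n]$, that is, $\Ccal_1 \models \exists x\,\phi[x, i_z(\bar a_0), \ldots, i_z(\bar a_n)]$.

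Since $i_z$ is elementary by Theorem \ref{lem:elem}, applying elementarity to the formula $\exists x\,\phi(x, v_0, \ldots, v_n)$ gives $\Ccal_1 \models \exists x\,\phi[x, \bar a_0, \ldots, \bar a_n]$. We pick a witness $\bar x$ with $\Ccal_1 \models \phi[\bar x, \bar a_0, \ldots, \bar a_n]$. Applying elementarity of $i_z$ once more, now to $\phi$ itself, yields $\Ccal_1 \models \phi[z\bar x, z\bar a_0, \ldots, z\bar a_n]$, i.e.\ $\phi[z\bar x, a_0, \ldots, a_n]$. Setting $x^* = z\bar x$, the element $z$ left-divides $x^*$ by definition.

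No step is a real obstacle, since all the work is done in Theorem \ref{lem:elem}. The one point to check is the direction of elementarity: we need both directions of $\Ccal_1 \models \psi[\bar c] \iff \Ccal_1 \models \psi[z\bar c]$. This is exactly what "elementary" means, so we use the reverse direction to pull the existential back and the forward direction to push the witness forward. We also note that the choice of preimages $\bar a_i$ is irrelevant. In fact they are unique by left cancellativity, which lifts from $\Acal_1$ to $\Ccal_1$ through the direct limit, but uniqueness is not needed.
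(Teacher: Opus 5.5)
Your proposal is correct and is essentially the paper's own proof. Both write $a_i = z\bar a_i$ and use the elementarity of $i_z$ (Theorem \ref{lem:elem}) to pull the existential back to the $\bar a_i$, then pick a witness $\bar x$ and push it forward to obtain $x^* = z\bar x$, which $z$ left-divides.
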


\begin{proof}
    This follows by applying the Tarski-Vaught test to the pointwise image of the application embedding given by $z$. More precisely, suppose $a_i = z \bar{a}_i$ for $i\in \{0,\dots, n\}$. By Theorem \ref{lem:elem}, $\Ccal_1\models \exists x \phi[x, \bar{a}_0, \dots, \bar{a}_n]$. Let $x\in\Ccal_1$ witness this, so $\Ccal_1\models \phi[x, \bar{a}_0, \dots, \bar{a}_n]$. Now apply Theorem \ref{lem:elem} to the map $y\mapsto zy$, we have $\Ccal_1\models \phi[zx, z\bar{a}_0, \dots, z\bar{a}_n]$. Since $z\bar{a}_i = a_i$ for $i\in \{0,\dots, n\}$ and $z$ left divides $zx$, we are done.
\end{proof}

\begin{corollary}\label{cor:homogeneity}
Assume the large cardinal hypothesis: there exists a nontrivial elementary embedding $\vlm \to \vlm$.
Suppose $\varphi(x)$ is a first-order formula (with exactly \emph{one} free variable) and that $\Ccal_1 \models \exists x \varphi(x)$ then $\Ccal_1 \models \forall x \varphi(x)$.
\end{corollary}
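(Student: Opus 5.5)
Fix a witness $b \in \Ccal_1$ with $\Ccal_1 \models \varphi(b)$ and an arbitrary $c \in \Ccal_1$; the aim is $\Ccal_1 \models \varphi(c)$. The plan is to connect $b$ and $c$ by a chain of application maps $i_z$, each of which is elementary by Theorem \ref{lem:elem}. Since $\varphi$ has only one free variable and no parameters, elementarity of $i_z$ gives $\Ccal_1 \models \varphi(t) \iff \Ccal_1 \models \varphi(zt)$ for every $z,t$. So $\varphi$ is invariant under left multiplication in both directions, and truth of $\varphi$ is constant on the equivalence classes of the relation generated by $t \sim zt$.

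It therefore suffices to show that any two elements of $\Ccal_1$ lie in the same class. This will use the remark after the definition of $\Ccal_1$ (the lifted version of Proposition \ref{prop:commonPower}): for $b, c \in \Ccal_1$ there are $n, m < \omega$ with $b^{(n)} = c^{(m)}$. Since $b^{(k+1)} = b^{(k)} b^{(k)}$, each right power is obtained from the previous one by applying $i_{b^{(k)}}$. Hence $b \sim b^{(1)} \sim \cdots \sim b^{(n)}$, and likewise $c \sim c^{(m)}$. Chaining these, $\Ccal_1 \models \varphi(b)$ gives $\Ccal_1 \models \varphi(b^{(n)})$, that is, $\varphi(c^{(m)})$. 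Running the equivalences for $c$ backwards then yields $\Ccal_1 \models \varphi(c)$.

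The one step that needs care is justifying the common-power fact in $\Ccal_1$ rather than in $\Acal_1$. Given $b, c$, choose a stage $N$ with $b = \pi_{N,\omega}(b_0)$ and $c = \pi_{N,\omega}(c_0)$ for some $b_0, c_0$ in the $N$-th copy of $\Acal_1$. Proposition \ref{prop:commonPower} gives $b_0^{(n)} = c_0^{(m)}$ there. Since $\pi_{N,\omega}$ is a homomorphism, it preserves right powers, so $b^{(n)} = c^{(m)}$. I do not expect a real obstacle here: all the large cardinal strength sits inside Theorem \ref{lem:elem}. The only subtlety is that the formula must be parameter-free (one free variable), which is exactly what allows the backward direction $\varphi(zt) \Rightarrow \varphi(t)$ with no parameters to transport.
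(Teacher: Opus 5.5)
Your proposal is correct and follows essentially the same route as the paper. Both arguments transfer the common-right-power property (Proposition~\ref{prop:commonPower}) to $\Ccal_1$ by pulling back to a common stage of the direct limit, then push $\varphi$ forward along the right powers of the witness and back down along the right powers of the arbitrary element, using elementarity of the maps $i_z$ from Theorem~\ref{lem:elem}; your explicit check that $\pi_{N,\omega}$ preserves right powers simply fills in what the paper calls ``as usual.''
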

\begin{proof} This follows from the property true of $\Acal_1$ (and therefore $\Ccal_1$)  that for any $a$ and $b$, there are $n, m < \omega$ such that $a^{(n)} = b^{(m)}$ (Proposition \ref{prop:commonPower}). To see this,
let $x\in \Ccal_1$ be such that $\Ccal_1 \models \varphi(x)$ and let $y$ be any element of $\Ccal_1$. Let $a,b\in A_1$ be the preimages of $x,y$ respectively, as usual. Let $m,n < \omega$ be such that $a^{(m)} = b^{(n)}$. Then we have $x^{(m)} = y^{(n)}$. 
By induction, using the elementarity of the maps $i_z: \Ccal_1 \rightarrow \Ccal_1$ given by $i_z(t) = zt$ for any $z\in \Ccal_1$ as given by Theorem \ref{lem:elem}, we get that $\Ccal_1 \models \varphi(x^{(m)})$, so $\Ccal_1 \models \varphi(y^{(n)})$. This implies, again by Theorem \ref{lem:elem}, that $\Ccal_1 \models \varphi(y)$. 
\end{proof}

Note that the maps $i_z: \Ccal_1 \to \Ccal_1$ for $z \in \Ccal_1$ given by $i_z(t) = zt$ are elementary (by Theorem \ref{lem:elem}) but are never onto. This is because $\rng i_z \subseteq (z, \infty)$, where $(z,\infty) = \{ a \in \Ccal_1|\, z < a\}$. We can however find automorphisms\footnote{An automorphism of $\Ccal_1$ is an embedding that is also a bijection.} of $\Ccal_1$ which map any given element to any other element, which is a stronger version of Corollary \ref{cor:homogeneity}. 

\begin{theorem}[Homogeneity of $\Ccal_1$]
\label{thm:strongerhomog}
Suppose $\Ecal_{\lambda+1} \neq \emptyset$. 
For any $a, b \in \Ccal_1$, there is an automorphism $\pi:\Ccal_1 \to \Ccal_1$ such that $\pi(a) = b$. 
\end{theorem}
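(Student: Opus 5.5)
The plan is to reduce the statement to a single claim: for every $a\in\Ccal_1$ there is an automorphism $f_a$ of $\Ccal_1$ with $f_a(x_0)=a$. Granting this, $\pi=f_b\circ f_a^{-1}$ works.

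\textbf{Step 1: describe $\Ccal_1$ as a union of monogenerated free subalgebras.} Since $\pi(x)=xx$, the generator of the $n$-th copy satisfies $x_n=x_{n+1}x_{n+1}$. Hence
\[
\Ccal_1=\bigcup_n \Acal_{x_n}
\]
is an increasing union, each $\Acal_{x_n}\cong\Acal_1$ is free on $x_n$, and $x_{n+1}$ is a square root of $x_n$.

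\textbf{Step 2: turn a root sequence into an automorphism.} Suppose $a=a_0,a_1,a_2,\dots$ satisfy $a_{k+1}a_{k+1}=a_k$ and $\bigcup_k\Acal_{a_k}=\Ccal_1$. Since $\Acal_1$ is free and every nontrivial LDA containing an element $c$ contains a copy of $\Acal_c\cong\Acal_1$ (Laver), the maps $x_k\mapsto a_k$ give isomorphisms $\Acal_{x_k}\to\Acal_{a_k}$. These are compatible because $x_{k+1}x_{k+1}\mapsto a_{k+1}a_{k+1}=a_k$. Their union is an embedding $\Ccal_1\to\Ccal_1$ sending $x_0$ to $a$, and it is onto by exhaustiveness. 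So everything comes down to building such an exhaustive root sequence for arbitrary $a$.

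\textbf{Step 3: build the root sequence.} Enumerate $\Ccal_1$ as $c_0,c_1,\dots$; it is countable. At stage $k$ we are given $a_k$ and want $a_{k+1}$ with $a_{k+1}a_{k+1}=a_k$ and $c_k\in\Acal_{a_{k'}}$ for some $k'\ge k+1$. One extra square root may not suffice, so we allow finitely many steps per stage. It is enough to capture $x_N$ for large $N$, since $c_k\in\Acal_{x_N}$ for some $N$.

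We work through the identification of $\Ccal_1$ with the direct limit of copies of $\Acal_j$, $j\in\Ecal_{\lambda+1}$, exactly as in the proof of Lemma~\ref{lem:conesquarerootsexist}. There, $a_k$ and $x_N$ are images of elements $k_{a}\in\Acal_j$ and $j$ itself at some level $n$. Lemma~\ref{lem:conesquarerootsexist}, or rather its embedding-level form Lemma~\ref{lem:simplifiedsquarerootstructurelemma}, gives $\bar k\in\Acal_q$ with $\bar k\bar k=k_a$, $j\in\rng\bar k$, and $q^{(r)}=j$. The finite set $\{j,\dots,\bar k\}$ together with the intermediate roots of $j$ is rigid and weakly square-full. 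Corollary~\ref{cor:rigidityextlemcor} then yields a common iterated root $\ell$ that generates the whole set. We use the extension lemma, Lemma~\ref{lem:rigidityextensionlemma}, to insert a chain of square roots below $\bar k$ ending at $\ell$. Concretely, we look for $\bar k=\ell^{(s)}$, so that $j\in\Acal_\ell$ and $k_a=\ell^{(s+1)}$.

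Pushing forward to $\Ccal_1$ by $\ell\rest\Acal_q$ and the limit maps, as in Figure~\ref{limit}, we take $a_{k+1},\dots,a_{k+s+1}$ to be the images of $\ell^{(s)},\dots,\ell$. Then $x_N$, and hence $c_k$, lies in $\Acal_{a_{k+s+1}}$.

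\textbf{Main obstacle.} The hard step is this last one: ensuring that the new root is generated from a \emph{single} iterated root of $a_k$, rather than from a root of $x_N$ as Corollary~\ref{cor:rigidityextlemcor} directly gives. I would first try to choose the square root of $a_k$ via Lemma~\ref{lem:squarerootstructurelemma} so that the auxiliary element it produces is itself an iterated root of the new square root. Only then would I apply Lemma~\ref{lem:rigidityextensionlemma} with $k$ taken to be the square root of $a_k$. Failing that, I would prove a variant of Lemma~\ref{lem:rigidsqrfull} in which the natural pullback is an iterated root of a prescribed element of $S$.

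The degrees of elementarity drop at each stage, but only finitely many steps are used per stage. So it suffices to start each stage from embeddings in $\Ecal_\lambda^{2r}$ with $r$ large, which is possible by Lemma~\ref{lem:arblongsqrrtseq} since $j\in\Ecal_{\lambda+1}$ is fully elementary.
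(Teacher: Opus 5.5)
Your architecture matches the paper's: an exhaustive chain of square roots $a=a_0,a_1,\dots$ gives an automorphism via $x_k\mapsto a_k$, and the chain is built by transferring to $\Acal_j$ and using the rigidity extension lemma (this is Lemma~\ref{lem:conesquareseqdecomp} plus the short argument after it). The gap is in Step~3, and it is not the one you flag.

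\textbf{The obstacle you flag is not real.} Corollary~\ref{cor:rigidityextlemcor} already produces an iterated root of \emph{any prescribed} $k\in S$ that generates $S$. So your second remedy is essentially Lemma~\ref{lem:rigidityextensionlemma}. Your first remedy cannot work in general. If $q$ were an iterated root of $\bar k$, then $j=q^{(r)}$ and $\bar k$ would both be right powers of $q$. Since $k_a=\bar k\bar k\in\Acal_j$, a right-depth count then forces $k_a$ to be a right power of $j$.

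\textbf{The real gap is transport.} You never get the new embedding $\ell$ back into $\Ccal_1$ compatibly with the identification $j\leftrightarrow x_N$. ``Pushing forward by $\ell\rest\Acal_q$'' conflates your new $\ell$ with the map $q\circ\cdots\circ q$ from the proof of Lemma~\ref{lem:conesquarerootsexist}. That map is defined only on $\Acal_q$, and the new $\ell$, lying below $\bar k$, has no reason to be in $\Acal_q$. Nor can you send $\ell$ to a late generator $x_u$ just because $\ell$ is an iterated root of the embedding corresponding to $x_0$. That assignment sends $j$ to $x_N$ only if $j$ is a right power of $\ell$, which by the same count forces $k_a$ to be a right power of $j$. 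Roots are not unique: for example, $(xx)(xx)=((xx)x)((xx)x)$. So the top of the chain does not determine where $j$, and hence $k_a$, goes, and the images of $\ell^{(s)},\dots,\ell$ need not be roots of $a_k$.

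\textbf{How the paper closes it.} It applies the extension lemma a second time.
\begin{enumerate}
\item Take $j^*$ a square root of $j$ with $\ell\in\rng j^*$, using Lemma~\ref{lem:sqrrtsniceformat} and the full elementarity of $j$.
\item The set $S'=\{\ell^{(t)},\dots,\ell,j^*\}$, whose top $\ell^{(t)}$ is the embedding corresponding to $x_0$, is rigid and weakly square-full, because $j^*j^*=j\in\Acal_\ell$.
\item Lemma~\ref{lem:rigidityextensionlemma} with $k=j^*$ gives $\bar j$, an iterated root of $j^*$, with $\ell\in\Acal_{\bar j}$.
\item Now $j$ is literally a right power of $\bar j$. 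Identifying $\bar j$ with a late generator therefore sends $j$ to $x_N$ and $k_a$ to $a_k$, and the image of $\ell$ is the required root of $a_k$ generating $x_N$.
\end{enumerate}
The elementarity of the first square root is chosen large enough to allow both applications.

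Alternatively, write $j=w[\ell]$. The formula $\exists c\,(w[c]=y)$ holds at $y=w[x_0]$, so Corollary~\ref{cor:homogeneity} yields $c\in\Ccal_1$ with $w[c]=x_N$, and $\ell\mapsto c$ does the transport.

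\textbf{Two smaller points.}
\begin{itemize}
\item Weak square-fullness requires every element to be an iterated root of the top element. So the top of your set must be the embedding corresponding to $x_0$, not $j$: indeed $\bar k\bar k=k_a\in\Acal_j$ is not an iterated root of $j$ unless $k_a=j$. Also $a$ must be an iterated root of $x_0$, which the paper arranges via Proposition~\ref{prop:commonPower}.
\item In Step~2, the claim that any LDA containing $c$ contains a free copy $\Acal_c$ is false in general (idempotent LDAs are counterexamples). It does hold in $\Ccal_1$, by transferring to $\Acal_j$ and applying Laver's theorem.
\end{itemize}
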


This theorem follows from the following lemma.

\begin{lemma}
\label{lem:conesquareseqdecomp}
Suppose $\Ecal_{\lambda+1} \neq \emptyset$. 
    For any $a \in \Ccal_1$ there is a sequence $\la a_n|\, n < \omega \ra$ of elements of $\Ccal_1$ such that
    \begin{enumerate}
        \item $a_0 = a$,
        \item for all $n < \omega$, $a_{n+1}$ is a square root of $a_n$,
        \item $\bigcup_{n < \omega} \Acal_{a_n} = \Ccal_1$. 
    \end{enumerate}
\end{lemma}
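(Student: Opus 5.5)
We build the sequence recursively using Lemma \ref{lem:conesquarerootsexist}, interleaved with an enumeration of $\Ccal_1$. Since $\Ccal_1$ is countable (a direct limit of countably many copies of the countable algebra $\Acal_1$), fix an enumeration $\la c_n \mid n<\omega\ra$ of $\Ccal_1$. Put $a_0 = a$. Given $a_n$, apply Lemma \ref{lem:conesquarerootsexist} to $a_n$ and the finite list $b_0 = c_0, \ldots, b_n = c_n$. This gives $a_{n+1}$ with $a_{n+1}a_{n+1} = a_n$, with $a_{n+1}$ left dividing each $c_i$ for $i \le n$, and with $a_{n+1}c_i = a_n c_i$. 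Conditions (1) and (2) then hold by construction. Moreover $\Acal_{a_n} \subseteq \Acal_{a_{n+1}}$, since $a_n = a_{n+1}a_{n+1}$, so the union in (3) is increasing.

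The real work is (3). Divisibility alone does not suffice: knowing $c_i = a_{n+1}d$ only puts $c_i$ in $\Acal_{a_{n+1}}$ if $d$ is too. So I would strengthen the recursion so that each $c_i$ is actually generated by some $a_N$. Two ideas seem promising.

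\textbf{First idea: use common powers in $\Ccal_1$.} By the remark after the definition of $\Ccal_1$, for each $c$ and each $a_n$ there are $p,q$ with $c^{(p)} = a_n^{(q)}$. So every $c$ is an iterated root of some element of $\Acal_{a_n}$. Membership of roots in $\Acal_{a_N}$ does not follow automatically, however. I would therefore instead choose the square roots more carefully at the level of embeddings, as in the proof of Lemma \ref{lem:conesquarerootsexist}.

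\textbf{Second idea: work inside $\Acal_q$ and transfer.} Represent $a_n$ and $c_i$ in some copy $\Acal_j$ of the direct system. Lemma \ref{lem:simplifiedsquarerootstructurelemma}, or its stronger form Lemma \ref{lem:squarerootstructurelemma}, yields $q$ with $q^{(r)} = j$ and a square root $\bar k \in \Acal_q$. Since $\Acal_j \subseteq \Acal_q$, the element $q$ generates everything in $\Acal_j$. After transporting by $\ell = q^{\circ r}$ and $\pi_{n+r,\omega}$, the image $\pi_{n+r,\omega}(\ell(q)) = \pi_{n+r,\omega}(j)$ generates the image of $\Acal_j$, which contains $a_n$ and $c_0,\ldots,c_n$.

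To make the sequence consist of square roots, I would then interpolate. Using Corollary \ref{cor:rigidityextlemcor} / Lemma \ref{lem:rigidsqrfull}, I would arrange that the new generator $q$ is an iterated root of $\bar k$, so that $\bar k = q^{(s)}$. Then set $a_{n+1}, \ldots, a_{n+s}$ to be the images of $q^{(s-1)}, \ldots, q$. Each of these is a square root of its predecessor, and the last one generates $c_0,\ldots,c_n$.

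I expect the main obstacle to be exactly this arrangement: simultaneously making the new generator an iterated root of the chosen square root $\bar k$ of $k_{a_n}$, while keeping enough elementarity ($\Ecal^{2r}_\lambda$ bounds) at each stage. I would handle it by fixing $j \in \Ecal_{\lambda+1}$, which is fully elementary, so that the degradation of elementarity at each finite stage is harmless. I would apply Lemma \ref{lem:rigidityextensionlemma} to the finite rigid weakly square-full set $\{j, \ldots, \bar k\}$ with target $k = \bar k$; this produces a root of $\bar k$ generating the whole set. Given that, (3) follows because each $c_i$ lies in $\Acal_{a_N}$ for some large $N$.
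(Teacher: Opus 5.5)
There is a genuine gap at the step where the new root is placed into $\Ccal_1$. Your bookkeeping is fine: an enumeration of $\Ccal_1$ works in place of the paper's $x_n^*$. Your use of Lemma~\ref{lem:rigidityextensionlemma} to get an iterated root $\ell$ of $\bar k$ with $j\in\Acal_\ell$ matches the first half of the paper's induction step. What is missing is any map sending $\ell$ into $\Ccal_1$, and the mechanism you implicitly reuse cannot supply one.

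The transport in the proof of Lemma~\ref{lem:conesquarerootsexist} relies on $q^{(r)}=j$. Only then does $q^{\circ r}$ send $q$ to $j$ and agree with $\pi_{n,n+r}$ on $\Acal_j$ (Lemma~\ref{lem:nthrootfacts}). You write as if a single generator $q$ could be both this transportable root of $j$ and an iterated root of $\bar k$. In general that is impossible. Suppose $q^{(r)}=j$ and $q^{(s)}=\bar k$.
\begin{itemize}
\item If $s\ge r$, then $\bar k=j^{(s-r)}$. So $j\in\rng\bar k$ would make $j^{(s-r)}$ divide $j$ in $\Acal_j$ (Theorem~\ref{thm:pullbackiffdivides}), contradicting irreflexivity.
\item If $s<r$, then $j=\bar k^{(r-s)}$ and $k=\bar k^{(1)}\in\Acal_{\bar k^{(r-s)}}$. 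A right-depth count as in Lemma~\ref{lem:purelyalgebraicrootboundfact} permits this only when $k=j$.
\end{itemize}
So ``the images of $q^{(s-1)},\ldots,q$'' are not defined by anything in your argument. Without them you have no element of $\Ccal_1$ that is an iterated root of $a_n$ and generates $c_0,\ldots,c_n$.

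The missing idea is a second application of Lemma~\ref{lem:rigidityextensionlemma}, with roles reversed, to bring $\ell$ back under a root of $j$. This is what the paper does.
\begin{enumerate}
\item Top the rigid set by a common power: take $S=\{j^{(m)},\ldots,j,k^*\}$, where $k^*$ is a square root of $k$, $j\in\rng k^*$, and $k$ is an iterated root of $j^{(m)}$. Weak square-fullness needs this; ``$\{j,\ldots,\bar k\}$'' is not enough. Applying the lemma gives $\ell$.
\item By Lemma~\ref{lem:sqrrtsniceformat}, choose a square root $j^*$ of $j$ with $\ell\in\rng j^*$.
\item Apply Lemma~\ref{lem:rigidityextensionlemma} again to $\{\ell^{(s)},\ldots,\ell,j^*\}$, where $\ell^{(s)}=j^{(m)}$. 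This gives $\bar j$, an iterated root of $j^*$ and hence of $j$, with $\ell\in\Acal_{\bar j}$.
\item If $\bar j^{(u)}=j^{(m)}$, then $j=\bar j^{(u-m)}$. So sending $\bar j\mapsto x_u$ and following with $\pi_{u,\omega}$ extends the identification $j\mapsto x_m^*$.
\end{enumerate}
Under this map the image of $\ell$ is an iterated root of $a_n$ whose generated subalgebra contains $x_m^*$. The images of the $\ell^{(i)}$ supply the intermediate square roots. The chain starts with the image of $k^*$ itself, not of $q^{(s-1)}$ as you wrote.

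Once this step is in place, Lemma~\ref{lem:simplifiedsquarerootstructurelemma} is unnecessary. A square root $k^*$ from Lemma~\ref{lem:sqrrtsniceformat} is all the first application needs.
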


\begin{proof}
    We will prove the lemma with the sequence $\la a_n \ra$ satisfying that $a_{n+1}$ is an iterated square root of $a_n$, from which the original statement follows immediately (by simply including the intervening square roots).
    
    Let $j: \vlm \to \vlm$ be an elementary embedding. We realize $\Ccal_1$ as the direct limit of $(\pi_n: A^{(n)}\rightarrow A^{(n+1)} : n<\omega)$ where for each $n$, $A^{(n)} = \la x_n \ra$ and $\pi_n(x_n)= x_{n+1}x_{n+1}$. Let $\pi_{n,\omega}$ be the natural direct limit maps.

    We have that $\bigcup_{n < \omega} \Acal_{\pi_{n,\omega}(x_n)} = \Ccal_1$, and hence given $a \in \Ccal_1$, it is enough to define a sequence $a_n$ of iterated square roots of $a$ such that each $x_n^* = \pi_{n,\omega}(x_n)$ is in $\Acal_{a_m}$ for large enough $m$. We assume without loss of generality that $a$ is an iterated root of $x_0^*$ (using Proposition \ref{prop:commonPower}). Let $t < \omega$ be such that $a^{(t)} = x_0^*$.

\begin{figure}
\begin{center}
\begin{tikzpicture}
    \draw (0,0) node (xmstar) {$x_m^*$};
    \draw (2,1) node (an) {$a_n$};
    \draw (2,3) node (a) {$a=a_0=a_n^{(n')}$};
    \draw (1,5) node (xzero) {$x_0^*$};

    \draw (xmstar) --node[left] {$(m)$} (xzero);
    \draw[dotted] (xmstar) -- (an);
    \draw (an) -- node[right] {$(n')$} (a);
    \draw (a) -- node[above right] {$(t)$} (xzero);

    \draw (5,0) node (j) {$j$};
    \draw (7,1) node (k) {$k$};
    \draw (7,3) node (kn) {$k^{(n')}$};
    \draw (6,5) node (jm) {$j^{(m)}$};
    
    \draw (j) --node[left] {$(m)$} (jm);
    \draw[dotted] (j) -- (k);
    \draw (k) -- node[right] {$(n')$} (kn);
    \draw (kn) -- node[above right] {$(t)$} (jm);

    \draw[-latex] (xmstar) -- node[above] {$\sigma$} (j);

    \draw (7,-1) node (kstar) {$k^*$};
    \draw (7,-2) node (l) {$\ell$};
    \draw (5,-3) node (jstar) {$j^*$};
    \draw (5,-4.5) node (jbar) {$\bar j$};

    \draw (kstar) -- (k);
    \draw (l) -- node[right] {$(<|S|)$} (kstar);
    \draw (jstar) -- (j);
    \draw (jbar) -- node[left] {$(<|S'|)$} (jstar);
    \draw[dotted] (l) -- (j);
    \draw[dashed] (jstar) -- (l);
    \draw[dotted] (jbar) -- (l);

    \draw (2,-2) node (anplusone) {$a_{n+1}$};
    \draw (0,-4.5) node (xustar) {$x_u^*$};

    \draw (xustar) -- node[left] {$(<|S'|+1)$} (xmstar);
    \draw[dotted] (xustar) -- (anplusone);
    \draw (anplusone) -- node[right] {$(<|S|+1)$} (an);
    \draw[dotted] (anplusone) -- (xmstar);

    \draw[-latex] (jbar) -- node[above] {$\pi_{u,\omega} \circ \sigma^*$} (xustar);
    
\end{tikzpicture} 
\end{center}
\caption{Induction step for Lemma \ref{lem:conesquareseqdecomp}.}
\label{iteratedsqrfigure}
\end{figure}
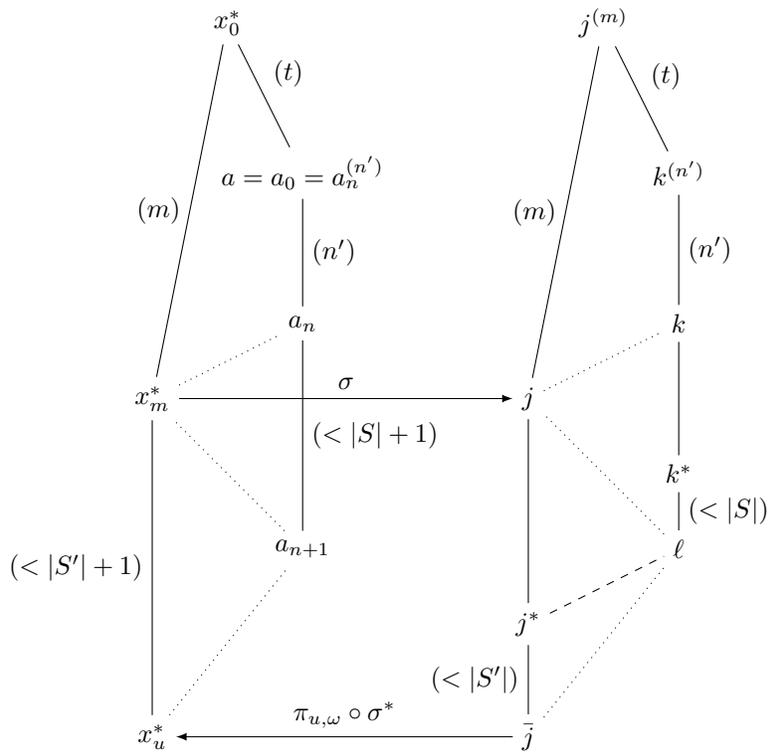

    We define the sequence $\la a_n|\, n <\omega \ra$ by induction such that $x_n^* \in \Acal_{a_{n}}$. Let $a_0 = a$. Suppose $\la a_i|\, i \le n \ra$ is defined such that $x_{n}^* \in \Acal_{a_n}$, and there is $n' < \omega$ such that $a_n^{(n')} = a_0 = a$.  Define $a_{n+1}$ as follows. Assume that $m \ge n+1$ is least such that $a_n \in \Acal_{x_m^*}$. Let $\sigma: \Acal_{x_m^*} \to \Acal_j$ be the natural isomorphism (see Figure \ref{iteratedsqrfigure}). Then $\sigma(x_m^*) = j$ and we let $\sigma(a_n) = k$. Let $S = \{ j^{(m)}, j^{(m-1)}, \ldots, j, k^*\}$ where $k^* \in \Ecal^{2(2(m+2)+t+n'+3)}_\lambda$ is a square root of $k$ (given by Lemma \ref{lem:sqrrtsniceformat}) such that $j \in \rng k^*$. Since $a$ is an iterated root of $x_0^*$ we have that $k$ and hence $k^*$ are iterated roots of $j^{(m)}$. So $S$ is rigid and weakly square-full. By Lemma \ref{lem:rigidityextensionlemma} (since $|S| = m+2 < m+3$) there is an $\ell \in \Ecal_\lambda^{2(m+2+t+n'+3)}$ a $(<|S|)$-iterated root of $k^*$ such that $S \subseteq \Acal_\ell$. Now let $j^*$ be a square root of $j$ such that $\ell \in \rng j^*$. We have that for $s$ such that $\ell^{(s)} = j^{(m)}$ that $S' = \{ \ell^{(s)}, \ell^{(s-1)}, \ldots, \ell, j^*\}$ is a rigid, weakly square-full set. We have 
    $$|S'| = s+2 < (|S| +1+ t+n')+2 = m+3+t+n' + 2,$$ which is clear by looking at the diagram. And hence again by Lemma \ref{lem:rigidityextensionlemma}, there is a $\bar j\in \Ecal_\lambda$ an iterated root of $j^*$ such that $S' \subseteq \Acal_{\bar j}$. 
    
    Let $u < \omega$ be such that $\bar j^{(u)} = j^{(m)}$, and let $\bar a_{n+1}$ be the image of $\ell$ under the natural isomorphism $\sigma^*: \Acal_{\bar j} \isom A^{(u)}$. Then set $a_{n+1} = \pi_{u, \omega}(\bar a_{n+1})$. 
    
    We claim that $a_{n+1}$ has the desired property, namely that $x_{n+1}^* \in \Acal_{a_{n+1}}$. To see this, note that $\pi_{u,\omega}(\sigma^*(\bar j)) = x_u^*$. And since $j \in \Acal_\ell$, $x_m^* \in \Acal_{a_{n+1}}$. So since $m \ge n+1$, the desired result follows.
\end{proof}

\begin{proof}[Proof of Theorem \ref{thm:strongerhomog}.]
    Given $a, b \in \Ccal_1$, let $\la a_n|\, n < \omega \ra$ and $\la b_n|\, n < \omega \ra$ be such that: 
    \begin{enumerate}
        \item $a_0 = a$, $b_0 = b$,
        \item for all $n < \omega$, $a_{n+1}$ is a square root of $a_n$ and $b_{n+1}$ is a square root of $b_n$,
        \item $\bigcup_{n < \omega} \Acal_{a_n} = \bigcup_{n < \omega} \Acal_{b_n} = \Ccal_1$.
    \end{enumerate}
    Then the map sending $a_n \mapsto b_n$ for all $n < \omega$ clearly induces an automorphism of $\Ccal_1$ such that $a \mapsto b$. 
\end{proof}

Recall that a structure $A$ is said to be \emph{strongly $\lambda$-homogeneous} if for every $\bar a, \bar b \in A$ with $|\bar a|, |\bar b| < \lambda$, if $\bar a$ and $\bar b$ have the same type, then there is an automorphism of $A$ sending $\bar a$ to $\bar b$ \cite{Hodges}.

\begin{corollary}
\label{cor:stronghomogeneity}
    Suppose $\Ecal_{\lambda+1} \neq \emptyset$. Then $\Ccal_1$ is strongly $\omega$-homogeneous.
\end{corollary}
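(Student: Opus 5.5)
We must show that for finite tuples $\bar a=(a_1,\dots,a_k)$ and $\bar b=(b_1,\dots,b_k)$ of $\Ccal_1$ with the same type, there is an automorphism of $\Ccal_1$ carrying $\bar a$ to $\bar b$. The plan is to reduce to the single-element case handled by Theorem \ref{thm:strongerhomog} and Lemma \ref{lem:conesquareseqdecomp}. The reduction rests on the fact that every finite tuple is definable from one element. By the Remark after the definition of $\Ccal_1$ (common powers) together with the linearity of $<_L$, I would find a single $c\in\Ccal_1$ with $a_1,\dots,a_k\in\Acal_c$. One route: write each $a_i$ inside some copy $\Acal_{x_N^*}$ for a large enough $N$, and take $c=x_N^*$. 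Then each $a_i=w_i(c)$ for terms $w_i$ in one variable.

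Next I need a matching $d$ for $\bar b$ with the same terms, which is the main obstacle. Consider the formula
\[
\theta(y)\equiv \exists z\,\Bigl(\textstyle\bigwedge_i y_i=w_i(z)\Bigr),
\]
with free variables $\bar y$. It is satisfied by $\bar a$, so by equality of types it is satisfied by $\bar b$. This gives $d$ with $b_i=w_i(d)$.

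The hard part is ensuring that $c$ and $d$ have enough in common that the automorphism of Theorem \ref{thm:strongerhomog} sending $c\mapsto d$ actually sends $w_i(c)\mapsto w_i(d)$. This holds automatically for any automorphism, since automorphisms preserve the operation. So any automorphism $\pi$ with $\pi(c)=d$ gives $\pi(a_i)=\pi(w_i(c))=w_i(\pi(c))=w_i(d)=b_i$.

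It remains to justify the existence of $c$. Every element of $\Ccal_1$ lies in $\pi_{N,\omega}[\Acal_1]=\Acal_{x_N^*}$ for some $N$, and these subalgebras increase with $N$ because $x_N^*=x_{N+1}^*x_{N+1}^*\in\Acal_{x_{N+1}^*}$. A finite tuple therefore lies in a single $\Acal_{x_N^*}$, and since $x_N^*$ generates that subalgebra, the terms $w_i$ exist. Note that the existence of $d$ uses only that $\bar a$ and $\bar b$ share the single existential formula $\theta$. Theorem \ref{thm:strongerhomog} then supplies the automorphism, and the large cardinal hypothesis $\Ecal_{\lambda+1}\neq\emptyset$ is used only through that theorem.
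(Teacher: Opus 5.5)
Your proposal is correct and takes essentially the same route as the paper. Both find a single generator $c$ with $\bar a\subseteq\Acal_c$, use the existential formula $\exists z\,\bigwedge_i y_i=w_i(z)$ from the common type to obtain the matching $d$ for $\bar b$, and apply Theorem \ref{thm:strongerhomog}. Your explicit choice $c=x_N^*$, using that the subalgebras $\Acal_{x_N^*}$ increase with $N$, justifies a step the paper only asserts.
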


\begin{proof}
    Suppose $\vec a = \la a_0, \ldots, a_n \ra$ and $\vec b=\la b_0, \ldots, b_n \ra$ are finite tuples from $\Ccal_1$ which have the same type. Let $a^*\in \Ccal_1$ be such that $\vec a \subseteq \Acal_{a^*}$. Let $w_i$ for $i \le n$ be words such that $w_i[a^*] = a_i$ for all $i \le n$. Since $\vec a$ and $\vec b$ have the same type, there must be a $b^* \in \Ccal_1$ such that $w_i[b^*] = b_i$ for all $i \le n$.

    By Theorem \ref{thm:strongerhomog} there is an automorphism $\pi: \Ccal_1 \to \Ccal_1$ such that $\pi(a^*) = b^*$. But then $\pi$ must map $\vec a$ to $\vec b$, as desired.
\end{proof}

\begin{corollary}[Universality of $\Ccal_1$]\label{cor:universality}
Let $\Ccal_1'$ be the direct limit of the system $(\pi_n':\Acal'^{(n)}\rightarrow \Acal'^{(n+1)} : n<\omega)$ where for each $n$, $\Acal'^{(n)} = \langle x_n' \rangle$ is an isomorphic copy of $\Acal_1$ and $\pi_n'(x_n') = t_n(x_{n+1}')$ for some term $t_n$. Then there is an embedding of $\Ccal_1'$ into $\Ccal_1$.
\end{corollary}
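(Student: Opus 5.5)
We build the embedding of $\Ccal_1'$ into $\Ccal_1$ one stage at a time, choosing an element $y_n\in\Ccal_1$ to be the image of $x_n'$. Since each $\Acal'^{(n)}\cong\Acal_1$ is free on $x_n'$, a choice of $y_n$ determines a homomorphism $\sigma_n:\Acal'^{(n)}\to\Ccal_1$ with $\sigma_n(x_n')=y_n$. The requirement $\pi_n'(x_n')=t_n(x_{n+1}')$ becomes the compatibility condition
$$y_n = t_n(y_{n+1}) \quad\text{for all } n.$$
For injectivity, Laver's theorem shows that $\Acal_{y}\cong\Acal_1$ for every $y\in\Ccal_1$. Indeed, $\Ccal_1$ is irreflexive for $<_L$, since this property lifts from $\Acal_1$ through the direct limit. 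Hence every monogenerated subalgebra of $\Ccal_1$ is free, so each $\sigma_n$ is injective. The $\sigma_n$ then cohere into an embedding of the direct limit $\Ccal_1'$.

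So it suffices to produce a sequence $\la y_n\mid n<\omega\ra$ in $\Ccal_1$ with $t_n(y_{n+1})=y_n$. Start with any $y_0$, say $y_0=x_0^*$. The key step is a root-extraction claim: for every term $t$ in one variable and every $a\in\Ccal_1$, there is $b\in\Ccal_1$ with $t(b)=a$. Given this claim, we choose the $y_n$ recursively, using dependent choice.

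To prove the claim, apply Corollary \ref{cor:homogeneity} (homogeneity) to the one-free-variable formula $\varphi(v)\equiv\exists u\,(t(u)=v)$. Clearly $\Ccal_1\models\exists v\,\varphi(v)$, witnessed by $v=t(x_0^*)$. Corollary \ref{cor:homogeneity} then gives $\Ccal_1\models\forall v\,\varphi(v)$, so every $a$ lies in the range of $t$. Alternatively, Theorem \ref{thm:strongerhomog} gives an automorphism $\pi$ sending $t(x_0^*)$ to $a$, and then $b=\pi(x_0^*)$ works. Both arguments use the large cardinal hypothesis $\Ecal_{\lambda+1}\neq\emptyset$, which is presumably assumed in the corollary as in the surrounding results.

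The step I expect to need the most care is checking that the maps $\sigma_n$ are injective and commute with the system. Compatibility reduces to $\sigma_{n+1}(\pi_n'(x_n'))=\sigma_{n+1}(t_n(x'_{n+1}))=t_n(y_{n+1})=y_n=\sigma_n(x_n')$, and agreement on generators suffices because the $\sigma_n$ are homomorphisms. For injectivity, two distinct words $w\ne w'$ of $\Acal_1$ are $<_L$-comparable by Laver's linearity theorem, so $w(y)$ and $w'(y)$ are also $<_L$-comparable. Irreflexivity of $<_L$ on $\Ccal_1$ then forces $w(y)\ne w'(y)$; this irreflexivity holds because any instance would live in some finite stage $\Acal_1$. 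The embedding of $\Ccal_1'$ into $\Ccal_1$ is then the map sending $\pi'_{n,\omega}(w)$ to $\sigma_n(w)$, which is well defined by compatibility and injective since each $\sigma_n$ is.
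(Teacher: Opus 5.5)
Your proposal is correct and follows the paper's proof. The paper also fixes an arbitrary image for $x_0'$ and uses Corollary~\ref{cor:homogeneity} to choose each next image as a $t_k$-preimage of the previous one. Your compatibility and injectivity checks, via irreflexivity of $<_L$ on $\Ccal_1$, fill in what the paper calls ``easy to see,'' and you are right that the large cardinal hypothesis enters implicitly through Corollary~\ref{cor:homogeneity}.
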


\begin{proof}

We will define an embedding $\sigma: \Ccal_1' \rightarrow \Ccal_1$. 
We define $\sigma(\pi_{n,\omega}'(x_n'))$ by induction on $n$. Set $\sigma(\pi_{0,\omega}'(x_0')) = a$ where $a$ is an arbitrary element of $\Ccal_1$. Suppose now that $\sigma(\pi_{k,\omega}'(x_k'))$ is defined. By Corollary \ref{cor:homogeneity}, there is some $y\in\Ccal_1$ such that $t_k(y) = \sigma(\pi_{k,\omega}'(x_k'))$. Define $\sigma(\pi_{k+1,\omega}'(x_{k+1}')) = y$. 

It is easy to see that $\sigma$ is an embedding.
\end{proof}

\begin{theorem}
    \label{thm:countablerigidsqrfull}
    Assume $\Ecal_{\lambda+1} \neq \emptyset$. 
    Suppose that $S \subseteq \Ecal_\lambda$ is countable, rigid, square-confluent, and square-full. Then $\la S \ra$ embeds into $\Ccal_1$. 
\end{theorem}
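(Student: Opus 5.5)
The plan is to realize $\la S \ra$ as an increasing union of copies of $\Acal_1$ that sit inside $\Ccal_1$ compatibly. Enumerate $S = \{s_0, s_1, \ldots\}$ and let $S_n = \{s_0,\ldots,s_n\}$. Since $S$ is square-confluent, for each $n$ there is a common iterated square $t_n \in S$ of all members of $S_n$; we may take $t_n$ to be an iterated square of $t_{n-1}$. The aim is to show that each $S_n$, closed under the operation $k \mapsto kk$ inside $S$ up to $t_n$, is a finite, rigid, weakly square-full set. Rigidity is inherited from $S$, as the Remark notes. Weak square-fullness comes from square-fullness: each $k \in S_n$ either has $kk \in S$, or is the top element of which all others are iterated roots. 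Adding the intermediate squares $k^{(i)}$ up to $t_n$ keeps the set finite, and these squares lie in $\Acal_k$, so they lie in $\Acal_\ell$ for an earlier-listed $\ell$ in the range-ordering. Lemma \ref{lem:rigidsqrfull} then gives a natural pullback $\ell_n$ with $S_n \subseteq \Acal_{\ell_n}$, and $\ell_n$ is an iterated root of $t_n$.

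Next I would arrange that the $\ell_n$ are coherent, meaning $\ell_{n} \in \Acal_{\ell_{n+1}}$, or more precisely $\ell_n = w_n(\ell_{n+1})$ for a term $w_n$. Both generate algebras isomorphic to $\Acal_1$ by Laver's theorem, so this gives a direct system $\Acal_{\ell_0} \to \Acal_{\ell_1} \to \cdots$ of inclusions. Its union contains $\la S \ra$ and is isomorphic to a direct limit $\Ccal_1'$ of copies of $\Acal_1$ with connecting maps $x_n' \mapsto w_n(x_{n+1}')$. Corollary \ref{cor:universality} then embeds $\Ccal_1'$ into $\Ccal_1$, and restricting to $\la S \ra$ gives the result. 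The universality corollary absorbs all the bookkeeping about which term $w_n$ appears.

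To get coherence, I would apply the rigidity-extension machinery (Lemma \ref{lem:rigidityextensionlemma} and Corollary \ref{cor:rigidityextlemcor}) to $S_{n+1} \cup \{\ell_n\}$, or to the natural finite set built from it. This produces a single generator $\ell_{n+1}$ with $S_{n+1} \cup \{\ell_n\} \subseteq \Acal_{\ell_{n+1}}$. The alternative is to show directly that $\ell_n \in \Acal_{\ell_{n+1}}$ by Theorem \ref{thm:pullbackiffdivides}: $\ell_n$ and $\ell_{n+1}$ are both iterated roots of $t_{n+1}$ living in a single $\Acal_{\ell_{n+1}}$, and pullbacks of members of $\Acal_{\ell_{n+1}}$ that exist stay in it. Elementarity is handled by the hypothesis $\Ecal_{\lambda+1} \neq \emptyset$. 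I would first replace $S$ by suitably strong representatives, or note that at stage $n$ only finitely many applications of Lemma \ref{lem:sqrrtsniceformat} are used. Each stage thus needs only $\Ecal_\lambda^{2r}$ for some finite $r$ depending on $n$, and the hypothesis supplies arbitrary finite $r$ via square roots of a fully elementary $j$, as in Lemma \ref{lem:arblongsqrrtseq}.

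The main obstacle is the coherence step. The rigidity-extension lemma produces new roots that need not lie in the range of the next element $s_{n+1}$, and it requires elementarity degrees that decrease with $|S|$. I would therefore fix a fully elementary $j$ in advance and realize everything inside iterated square roots of $j$ with lots of room. I would also use rigidity of the full countable $S$ to ensure that $s_{n+1}$ is in the range of, or has in its range, all previously built pullbacks. If this direct approach stalls, the fallback is to avoid coherent generators. Instead I would send $\Acal_{\ell_n}$ into $\Ccal_1$ at stage $n$, and use strong $\omega$-homogeneity (Corollary \ref{cor:stronghomogeneity}) together with Theorem \ref{lem:elem} to adjust the stage-$n$ map so that it extends the stage-$(n-1)$ map on $\la S_{n-1}\ra$. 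The union of these maps is then the desired embedding.
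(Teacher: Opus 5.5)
Your main line is correct and differs from the paper's. The coherence step you flag as the main obstacle is in fact immediate, but not for the reason you give: being iterated roots of the same $t_{n+1}$ does not put $\ell_n$ in $\Acal_{\ell_{n+1}}$, and saying both are ``living in a single $\Acal_{\ell_{n+1}}$'' assumes the conclusion. The working argument is as follows.
\begin{itemize}
\item Let $\widehat{S_n}$ be the set of all $k^{(i)}$ with $k\in S_n$, up to and including $t_n$.
\item Enumerate it by the range order as $k_0=t_n,k_1,\dots,k_N$.
\item Then $\ell_n$ is the end of the chain of successive pullbacks $p_0=k_0$, $p_i=k_i^{-1}(p_{i-1})$. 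Each pullback exists by rigidity, as in the remark after Lemma \ref{lem:rigidsqrfull}.
\item All $k_i$ lie in $\widehat{S_n}\subseteq\widehat{S_{n+1}}\subseteq\Acal_{\ell_{n+1}}$.
\item The implication (1)$\Rightarrow$(2) of Theorem \ref{thm:pullbackiffdivides}, applied inside $\Acal_{\ell_{n+1}}$ and by induction on $i$, keeps every $p_i$ in $\Acal_{\ell_{n+1}}$. Hence $\ell_n\in\Acal_{\ell_{n+1}}$.
\end{itemize}
Once this is in place, drop the rigidity-extension branch, the elementarity bookkeeping and the homogeneity fallback. The given $S\subseteq\Ecal_\lambda$ need not lie in any $\Ecal_\lambda^{2m}$, so Lemma \ref{lem:rigidityextensionlemma} is unavailable. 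You cannot simply swap $S$ for stronger representatives, and $S_{n+1}\cup\{\ell_n\}$ need not be rigid. None of this is needed: with coherence, $\la S\ra\subseteq\bigcup_n\Acal_{\ell_n}$, which is a direct limit of the type covered by Corollary \ref{cor:universality}.

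The paper never builds generators on the embedding side. It enumerates $S$ as a tree under squaring and builds $\sigma_m:\la S_m\ra\to\Ccal_1$ step by step, in three moves:
\begin{itemize}
\item It uses Lemma \ref{lem:conesquarerootsexist} and Corollary \ref{cor:elemembwitness}, that is, the elementarity of application (Theorem \ref{lem:elem}), to find in $\Ccal_1$ a square root with exactly the required left-divisibility pattern.
\item It pulls that finite configuration back into some $\Acal_j$.
\item It applies Lemma \ref{lem:rigidityuniqueness} to obtain an isomorphism extending $\sigma_m$.
\end{itemize}

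Your route first proves, using only $S\subseteq\Ecal_\lambda$, a cleaner structural fact: $\la S\ra$ sits inside an increasing union of monogenerated free algebras of embeddings. Only afterwards do you pass to $\Ccal_1$. As a result, the large-cardinal hypothesis enters only through the weak consequence that every element of $\Ccal_1$ lies in the image of every unary term map (Corollary \ref{cor:homogeneity}). You need neither Lemma \ref{lem:rigidityuniqueness} nor the square-root lemma in $\Ccal_1$.

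The paper's route instead works directly with the model theory of $\Ccal_1$. It realizes finite rigid square-full configurations there and transfers them via Lemma \ref{lem:rigidityuniqueness}, and it reuses exactly that scheme for its subsequent result extending finite square- and divisor-preserving bijections to automorphisms.
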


Note that, as mentioned at the beginning of Section \ref{sec:canonicalextension}, countable rigid sets of square roots of a fixed embedding do not embed into $\Acal_1$, so Theorem \ref{thm:countablerigidsqrfull} does not hold if we replace $\Ccal_1$ with $\Acal_1$. 

We need the following lemma to prove this theorem.

\begin{lem}  \label{lem:rigidityuniqueness} Suppose that $S,T \subseteq \Ecal_\lambda$ are finite, rigid and square-full and $\phi: S \to T$ is a bijection preserving squares and the range-ordering. Then there is an induced algebra isomorphism $\phi^*: \la S \ra  \to \la T \ra$. 
\end{lem}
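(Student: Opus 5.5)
Our plan is to realize both $\la S \ra$ and $\la T \ra$ inside monogenerated algebras $\Acal_\ell$ and $\Acal_{\ell'}$ generated by ``natural pullbacks'' (as in Lemma \ref{lem:rigidsqrfull}), show that the elements of $S$ and $T$ are given by the \emph{same} words in these generators, and then transport the isomorphism $\Acal_\ell \isom \Acal_1 \isom \Acal_{\ell'}$.

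First we write $S=\{k_0,\ldots,k_n\}$ with $k_i \in \rng k_j$ for $i<j$, and put $t_i=\phi(k_i)$. Since $\phi$ preserves the range ordering, $T=\{t_0,\ldots,t_n\}$ is enumerated the same way. Since $\phi$ preserves squares, for each $k_i$ with $k_ik_i \in S$ we have $t_it_i=\phi(k_ik_i)$. Square-fullness means that whenever $k_ik_i \notin S$, every other element is an iterated square root of $k_i$. Because $S$ is finite it has a top element, and the square-root pattern (which element is the square of which) is identical in $S$ and $T$.

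Next we check that a finite, rigid, square-full set, enumerated by the range order, is weakly square-full, perhaps after reindexing so that the square root relations point downward. Then Lemma \ref{lem:rigidsqrfull} applies and gives $S \subseteq \Acal_\ell$ with $\ell=(k_1\circ\cdots\circ k_n)^{-1}(k_0)$, and likewise $T \subseteq \Acal_{\ell'}$ with $\ell'=(t_1\circ\cdots\circ t_n)^{-1}(t_0)$.

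The key point is that the inductive proof of Lemma \ref{lem:rigidsqrfull} actually produces explicit words. At step $i+1$ we have $\ell_i = k_{i+1}\ell_{i+1}$. Moreover $k_{i+1}$ is obtained by pulling back, via $k_{i+1}$, the word expressing $k_{i+1}k_{i+1}$ (an element of $S$, or of $\Acal_{k_s}$) in terms of $\ell_i$. Every choice made here depends only on the square and range data, which $\phi$ preserves. So by induction on $i$ we obtain terms $w_0,\ldots,w_n$ with $k_i=w_i[\ell]$ and $t_i=w_i[\ell']$.

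Laver's theorem gives $\Acal_\ell\isom\Acal_1\isom\Acal_{\ell'}$ via $\ell\mapsto\ell'$. This isomorphism sends $w_i[\ell]$ to $w_i[\ell']$, that is, $k_i$ to $t_i$. Restricting it to the subalgebra $\la S\ra$ yields $\phi^*:\la S\ra\to\la T\ra$, which is onto $\la T\ra$ because the $t_i$ generate it.

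The main obstacle is the uniformity claim: that the pullback $k_{i+1}^{-1}$ applied to a word in $\ell_i$ gives the same word in $\ell_{i+1}$. This holds because pullback by $k_{i+1}$ commutes with application wherever it is defined. We also have to check that the words depend only on the combinatorial data and not on the particular embeddings. Concretely, we would strengthen the induction hypothesis to read ``$k_m = w_m^{(i)}[\ell_i]$ for $m\le i$, where $w^{(i)}$ is determined by the square/range pattern of $\{k_0,\ldots,k_i\}$'', and prove it for $S$ and $T$ simultaneously.
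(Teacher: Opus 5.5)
Your proposal is correct and follows essentially the same route as the paper. Both arguments use the natural pullbacks $\ell_i=(k_1\circ\cdots\circ k_i)^{-1}(k_0)$ and the identity $\ell_i=k_{i+1}\ell_{i+1}$, and both pull back the word for $k_{i+1}k_{i+1}=k_t$ by $k_{i+1}$ to propagate matching words for $S$ and $T$. The paper just phrases the invariant as ``the isomorphism $\Acal_{\ell_i}\to\Acal_{\bar\ell_i}$ induced by $\ell_i\mapsto\bar\ell_i$ sends $k_t\mapsto\bar k_t$'' rather than through explicit pattern-determined words. No reindexing is needed, since $kk=k(k)\in\rng k$ already places every square earlier in the range enumeration.
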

The proof of Lemma \ref{lem:rigidityuniqueness} is very similar to the proof of Lemma \ref{lem:rigidsqrfull}. The isomorphism $\phi^*$ is generated by identifying the natural pullbacks of $S$ and $T$, and we must check that this isomorphism agrees with $\phi$. 

\begin{proof} 
Suppose that $k_0, k_1, \ldots , k_n$ is an enumeration of $S$ such that for all $s < t \le n$, $k_s \in \rng k_t$, and suppose that $\bar k_0, \bar k_1, \ldots ,\bar k_n$ is an enumeration of $T$ such that for all $s < t \le n$, $\bar k_s \in \rng \bar k_t$. Note that the bijection $\phi$ must send $k_i$ to $\bar k_i$ for all $i\le n$ and preserve all the square relationships. 

We prove by induction that for $\ell_0 = k_0$,  $\ell_i = (k_1 \circ k_2 \circ \cdots \circ k_i)^{-1}(k_0)$, $\bar \ell_0 = \bar k_0$, and $\bar \ell_i = (\bar k_1 \circ \bar k_2 \circ \cdots \circ \bar k_i)^{-1}(\bar k_0)$ where $1 \le i \le n$, that the embedding $\phi^*_i: \Acal_{\ell_i} \to \Acal_{\bar \ell_i}$ induced by $\ell_i \mapsto \bar \ell_i$, sends $k_t \mapsto \bar k_t$ for $t \le i$. 

The base case is obvious. Now suppose it holds for $i < n$. We show it is true for $i+1$. Note that $\ell_{i+1} = k_{i+1}^{-1} \ell_i$ and $\bar \ell_{i+1} = \bar k_{i+1}^{-1}  \bar \ell_i$. Now, $k_{i+1} k_{i+1} \in S$, and hence it is equal to $k_t$ for some $t \le i$, and so $\bar k_{i+1}\bar k_{i+1} = \bar k_t$ as well. Let $u$ be a word in $A_1$ such that $u[\ell_i] = k_t$ (using Lemma \ref{lem:rigidsqrfull}). 

By our induction hypothesis, we have that $u[\bar \ell_i] = \bar k_t$.
Pulling back the statement $u[\ell_i] = k_t$ by $k_{i+1}$  we have $u[\ell_{i+1}] = k_{i+1}$, and pulling back $u[\bar \ell_i] = \bar k_t$ by $\bar k_{i+1}$  we have $u[\bar \ell_{i+1}] = \bar k_{i+1}$. 

Now, note that $\ell_i = k_{i+1} \ell_{i+1} = u[\ell_{i+1}]\ell_{i+1} = w[\ell_{i+1}]$ for a word $w$. Similarly $\bar \ell_i = w[\bar \ell_{i+1}]$. So we have $\phi^*_{i+1}: \ell_i \mapsto \bar \ell_{i}$. And hence using the induction hypothesis on $\phi^*_i$, which agrees with $\phi^*_{i+1}$ on $\{k_0, \ldots, k_i\}$, we have that $\phi^*_{i+1}: k_t \mapsto \bar k_t$ for all $t \le i$. 

 As our induction succeeds, the lemma immediately follows for $ i= n$. 
\end{proof}

\begin{proof}[Proof of Theorem \ref{thm:countablerigidsqrfull}.]
    Let $S \subseteq \Ecal_\lambda$ be countable, rigid, square-confluent, and square-full. We consider $S$ as a tree where we say $k$ is the parent of $j$ if $k$ is the square of $j$. Since $S$ is square-confluent, it is connected as a tree. Let $k_0 \in S$ be an arbitrary element that is fixed from now on. Considering $S$ as a countable connected tree, we can enumerate the vertices as $k_0, k_1, k_2, \dots$, where every $k_n$ is either the square (parent) or a square root (child) of some $k_m$ with $m<n$. Defining $S_n = \{k_0, \dots, k_{n}\}$ for $n <\omega$, we have a sequence of non-empty, finite, rigid, square-full sets $S_n$ such that $S_0 \subseteq S_1 \subseteq S_2 \subseteq \cdots$ and $\bigcup_{n<\omega} S_n = S$. In particular, each $S_{n} = S_{n-1} \cup \{k_{n}\}$ where either $k_{n}$ is the square of some (actually, the largest) element of $S_{n-1}$ or $k_n k_n \in S_{n-1}$.

    We will prove by induction that there are maps $\sigma_n: \la S_n \ra \to \Ccal_1$ such that for $n < m$, $\sigma_m \rest \la S_n \ra = \sigma_n$. Let $\pi_{n,\omega}: \Acal_1 \to \Ccal_1$ be the usual maps into $\Ccal_1$. Since $S_0 =\{k_0\}$, we can define $\rho_0: \Acal_{k_0} \to \Acal_1$ by sending $k_0$ to the generator of $\Acal_1$ and setting $\sigma_0 = \pi_{0,\omega} \circ \rho_0$. 

    We now describe the induction step. If $k_{m+1}$ is the square of some element of $S_m$, then we have $\langle S_{m+1} \rangle = \langle S_m \rangle$ and we set $\sigma_{m+1} = \sigma_m$. 
    
    Now suppose $k_{m+1}$ is the square root of some element of $S_m$. For ease of notation, we re-order $S_m = \{ k_0, k_1, \ldots, k_m\}$ so that $k_{n_1} \in \rng k_{n_2}$ for any $n_1 < n_2$. We also write $k = k_{m+1}$. We have $S_{m+1} = S_m \cup \{k\}$ and $k k = k_i$ for some $i \le m$. Let $r_k \le m$ be largest such that $k_0, \ldots, k_{r_k} \in \rng k$. Let $a_0, \ldots, a_m$ be the images of $k_0, \ldots, k_m$ under $\sigma_m$. By Lemma \ref{lem:conesquarerootsexist} there is an $a \in \Ccal_1$ which is a square root of $a_i$ such that $a$ left divides $a_0, \ldots, a_{r_k}$. Let $\phi(b, b_0, \ldots, b_{r_k})$ be the statement $b$ is a square root of $b_i$ such that $b$ left divides $b_0, \ldots, b_{r_k}$. We have then that $\Ccal_1$ satisfies that there exist some $b$ such that $\phi[b, a_0, \ldots, a_{r_k}]$. (Note that $i \le r_k$). 

    Suppose first $r_k < m$. We have that $a_{r_k+1}$ left-divides $a_0, \ldots, a_{r_k}$. By Corollary \ref{cor:elemembwitness}, there is some $b \in \Ccal_1$, such that $\phi[b, a_0, \ldots, a_{r_k}]$ holds and $a_{r_k+1}$ left-divides $b$. 
    Since $\{a_0, \ldots, a_m\}$ is rigid, iterated applications of Corollary \ref{cor:elemembwitness} show that there is a $b$ such that $\phi[b, a_0, \ldots, a_{r_k}]$ holds and $b$ is left-divided by $a_{r_k+1}, a_{r_k+2}, \ldots, a_m$. Fix such a witness $a^*$, and let $T^* = \{a_0, \ldots, a_m, a^*\}$. 

    For the case that $r_k=m$, we let $a^*$ be any $b$ such that $\phi[b, a_0, \ldots, a_{r_k}]$. We then define $T^*$ in the same way. 

    Let $q < \omega$ be least such that $T^* \subseteq \rng \pi_{q,\omega}$. Fix $j \in \Ecal_{\lambda+1}$. Let $S^*$ be the set of embeddings in $\Acal_j$ corresponding to $\pi_{q,\omega}^{-1}[T^*]$ under the isomorphism $\Acal_j \isom \Acal_1$. We have then that $S^*$ is a finite, rigid, square-full set. Furthermore, there is a natural bijection $S_{m+1} \to S^*$ which preserves squares and the range-ordering. So by Lemma \ref{lem:rigidityuniqueness} there is an induced algebra isomorphism $\tau: \la S_{m+1} \ra \to \la S^* \ra$. We define $\sigma_{m+1}$ to be the composition of $\tau$, the isomorphism $\Acal_j \isom \Acal_1$, and $\pi_{q,\omega}$. It is easy to check that $\sigma_{m+1}$ is as desired and agrees with $\sigma_m$. So the theorem follows by induction. 
\end{proof}

We mention one more consequence of Lemma \ref{lem:rigidityuniqueness}.
In this statement we refer to \emph{rigid} subsets of $\Ccal_1$ by which we mean the same definition as in the case of elementary embeddings, but with $j \in \rng k$ replaced by $a$ left-divides $b$. See Section \ref{section:pullbackalgebra}.

\begin{theorem}
Assume the large cardinal hypothesis: there exists a nontrivial elementary embedding $\vlm \to \vlm$. 
    Suppose that $S, T \subseteq \Ccal_1$ are finite, rigid, and square-full and $\phi: S \to T$ is a bijection which preserves squares and the left-divisor ordering. Then $\phi$ extends to an automorphism of $\Ccal_1$. 
\end{theorem}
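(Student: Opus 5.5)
The plan is to transport the problem from $\Ccal_1$ to embeddings, invoke Lemma \ref{lem:rigidityuniqueness} there, and then bring the result back to $\Ccal_1$ and extend it using Theorem \ref{thm:strongerhomog} and Lemma \ref{lem:conesquareseqdecomp}.

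\textbf{Step 1: realize $S$ and $T$ as embeddings.} Fix $j \in \Ecal_{\lambda+1}$. Choose $q < \omega$ with $S \cup T \subseteq \rng \pi_{q,\omega}$. Identify $\Acal_1$ with $\Acal_j$, so that $S$ and $T$ become finite sets $S^*, T^* \subseteq \Acal_j \subseteq \Ecal_\lambda$. Since $\pi_{q,\omega}$ is an embedding, it preserves squares. Since it is injective it also reflects squares, which gives square-fullness of $S^*$ and $T^*$. For rigidity, note that left-division in $\Acal_1$ agrees with left-division in $\Ccal_1$: by Lemma \ref{lem:divisoractionequivalence}, $a \mid b$ iff $ab = aab$, and this is an equation, which is preserved and reflected by the injective homomorphism $\pi_{q,\omega}$. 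By Theorem \ref{thm:pullbackiffdivides}, left-division in $\Acal_j$ coincides with the range relation. Hence $S^*$ and $T^*$ are finite, rigid and square-full, and $\phi$ becomes a bijection $\phi^*: S^* \to T^*$ preserving squares and the range ordering.

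\textbf{Step 2: get the algebra isomorphism.} Lemma \ref{lem:rigidityuniqueness} now yields an isomorphism $\la S^* \ra \to \la T^* \ra$ extending $\phi^*$. Its proof gives more: it is induced by sending the natural pullback $\ell_S$ to $\ell_T$, and $\la S^*\ra \subseteq \Acal_{\ell_S}$, $\la T^*\ra \subseteq \Acal_{\ell_T}$ by Lemma \ref{lem:rigidsqrfull}. (Lemma \ref{lem:rigidsqrfull} requires weak square-fullness; I expect that a finite rigid square-full set ordered by range is weakly square-full, because its top square lies in the set.) So there are words $w_s$ with $w_s[\ell_S] = s$ and $w_s[\ell_T] = \phi(s)$ for all $s \in S^*$. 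The natural pullbacks lie in $\Ecal_\lambda$ but need not lie in $\Acal_j$, so to return to $\Ccal_1$ we must pull them back algebraically.

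\textbf{Step 3: find the corresponding elements of $\Ccal_1$.} This is the step I expect to be the main obstacle. I would mimic the proof of Lemma \ref{lem:rigidsqrfull} inside $\Ccal_1$. Enumerate $S = \{a_0, \ldots, a_n\}$ with $a_s \mid a_t$ for $s > t$, and do the same for $T$. Define $c_0 = a_0$ and let $c_{i+1}$ be the unique element with $a_{i+1} c_{i+1} = c_i$. This element exists because $a_{i+1}$ divides everything below it. To see this, argue inductively using rigidity, Lemma \ref{lem:divisoractionequivalence} and left cancellativity: if $a_{i+1}$ divides $a_0, \ldots, a_i$, then it divides every element of the subalgebra they generate, and $c_i$ lies in that subalgebra. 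Uniqueness follows from left cancellation. Define $d_i$ from $T$ in the same way. Running the same induction as in Lemma \ref{lem:rigidsqrfull}, with application by $a_{i+1}$ as a homomorphism in place of pulling back by $k_{i+1}$, gives $a_s = w_s[c_n]$. In parallel, $\phi(a_s) = w_s[d_n]$ with the same words, because those words are determined by the square and order data alone, as in the proof of Lemma \ref{lem:rigidityuniqueness}.

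\textbf{Step 4: extend to an automorphism.} Theorem \ref{thm:strongerhomog} gives an automorphism $\pi$ of $\Ccal_1$ with $\pi(c_n) = d_n$. Then $\pi(a_s) = \pi(w_s[c_n]) = w_s[d_n] = \phi(a_s)$ for every $s$, so $\pi$ extends $\phi$.
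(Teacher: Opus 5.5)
Your overall route is the paper's. You move $S$ and $T$ into $\Acal_j$, use (the proof of) Lemma \ref{lem:rigidityuniqueness} to see that the natural pullbacks produce $S$ and $T$ by the same words, bring the natural pullbacks back into $\Ccal_1$, and finish with Theorem \ref{thm:strongerhomog}. Your expectation that finite, rigid, square-full sets are weakly square-full is also right. The top element's square cannot lie in $S$, so every other element is an iterated square root of it. Every non-top element's square must lie in $S$, and it necessarily comes earlier in the range enumeration.

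\textbf{The gap.} In Step 3 you justify the existence of $c_{i+1}$ with $a_{i+1}c_{i+1}=c_i$ by asserting that $c_i$ lies in the subalgebra generated by $a_0,\dots,a_i$. That is false in general: the natural pullback sits \emph{below} the set and generates it, not conversely. For example, take $a_1a_1=a_0=a_2a_2$ with $a_2\mid a_1$; this is realized in $\Acal_1$ by $a_0=xx(xx)$, $a_1=xxx$, $a_2=xx$. Then $c_1=a_1$. Since $a_2a_2=a_1a_1=(a_2c_2)(a_2c_2)=a_2(c_2c_2)$, left cancellation gives $c_2c_2=a_2$, so $c_2<_L a_2$. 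But $a_0=a_2a_2$, $a_1=a_2c_2$ and $a_2$ are all $\geq_L a_2$, hence so is every element of $\la a_0,a_1,a_2\ra$. So $c_2$ is not in that subalgebra, and as written Step 3 does not establish that the $c_i$ exist.

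\textbf{Repairs.} Either of two fixes works.

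\emph{Inside $\Ccal_1$.} Prove by induction the stronger statement that $a_t\mid c_i$ for all $t>i$; this is the algebraic form of the argument for Lemma \ref{lem:arblongsqrrtseq}(4). Suppose $a_{i+1}=a_t\alpha$ and $c_i=a_t\gamma$. From $a_{i+1}\mid c_i$ you get $a_t(\alpha\gamma)=a_t(\alpha\alpha\gamma)$, so $\alpha\gamma=\alpha\alpha\gamma$ by left cancellation. Lemma \ref{lem:divisoractionequivalence}, transferred to $\Ccal_1$ as in your Step 1, then gives $\gamma=\alpha\delta$. Now $c_i=a_{i+1}(a_t\delta)$, which forces $c_{i+1}=a_t\delta$. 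With this repair Steps 1 and 2 are not even needed, since the induction in $\Ccal_1$ produces the words $w_s$ by itself.

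\emph{Via your transport.} Contrary to your remark in Step 2, the natural pullback of $S^*$ does lie in $\Acal_j$. Each stage $\ell_{i+1}=k_{i+1}^{-1}(\ell_i)$ exists by the argument for Lemma \ref{lem:arblongsqrrtseq}(4). It is a pullback of an element of $\Acal_j$ by an element of $\Acal_j$, so Theorem \ref{thm:pullbackiffdivides} puts it in $\Acal_j$. Sending the $\ell_i$ into $\Ccal_1$ via $\Acal_j\isom\Acal_1$ and $\pi_{q,\omega}$ gives the $c_i$, and likewise the $d_i$. The identities $w_s[\ell_S]=s$ then transfer directly. This is exactly the ``translating back'' in the paper's sketch.
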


\begin{proof}[Proof sketch.]
    Let $S', T' \subseteq \Ecal_\lambda$ be corresponding sets of embeddings such that there are bijections $S \to S'$ and $T \to T'$ preserving squares and the left-divisor ordering.
    The proof of Lemma \ref{lem:rigidityuniqueness} shows that the natural isomorphism which sends the natural pullback of $S'$ to the natural pullback of $T'$ also sends $S'$ to $T'$ pointwise (when ordered according to the range relation). Translating this fact back (as in the proof of Theorem \ref{thm:countablerigidsqrfull}) into $\Ccal_1$ and using Theorem \ref{thm:strongerhomog}, there is an automorphism sending the natural pullback of $S$ to the natural pullback of $T$. And hence this automorphism is as desired.
\end{proof}

\section{Open Questions and Possible Extensions}\label{OpenQsAndExtensions}

Whether or not the large cardinal hypotheses in the above theorems are necessary remains open.

\begin{question}
    Can the conclusions of Theorems \ref{thm:elem_equiv} (1), \ref{lem:elem},  and  \ref{thm:strongerhomog} be proven in ZFC? Or do the conclusions of Theorems \ref{thm:elem_equiv} (1), \ref{lem:elem},  and  \ref{thm:strongerhomog}  imply $\text{Con}(\text{ZFC})$? 
\end{question}

Whether our results on $\Ccal_1$ can be extended to several generators is also unclear.

\begin{question}
    Is there an analogue of $\Ccal_1$ for two (or more) generators? In particular, is there a (simply definable) algebra which embeds $\Acal_2$ and where application is elementary?
\end{question}

\bibliographystyle{plain}
\bibliography{main}

\end{document}